\documentclass[12pt,letterpaper]{article}

\usepackage{amssymb,amsfonts,amscd,amsthm}
\usepackage[all,arc]{xy}
\usepackage{enumerate, bbm}
\usepackage{mathrsfs}
\usepackage{tikz}
\usepackage[left=0.9in,top=0.9in,right=0.9in,bottom=0.9in]{geometry}
\usepackage{mathtools}
\usepackage{hyperref}
\usepackage{color}
\usepackage{graphicx}
\usepackage{cleveref}
\usepackage{xfrac}
\usepackage{xcolor}
\usepackage{soul}
\usepackage{svg}
\usepackage{float}
\usepackage{bookmark}
	
\usepackage{enumitem} 
\usepackage[font={small,it},labelfont=bf]{caption}
\graphicspath{ {TexImages/} }

\newcommand{\Hc}{\mathcal{H}}

\newtheorem{thm}{Theorem}[section]
\newtheorem{cor}[thm]{Corollary}
\newtheorem{prop}[thm]{Proposition}
\newtheorem{lem}[thm]{Lemma}
\newtheorem{claim}[thm]{Claim}

\newtheorem{conj}[thm]{Conjecture}

\theoremstyle{definition}
\newtheorem{defn}{Definition}
\crefname{defn}{definition}{definitions}
\crefname{claim}{claim}{claims}

\hypersetup{
	colorlinks,
	citecolor=blue,
	filecolor=blue,
	linkcolor=blue,
	urlcolor=blue,
	linktocpage
}

\setenumerate[2]{label*=\arabic*.} 
\setlist[enumerate]{itemsep=2ex, topsep=2ex} 
\setlist[itemize]{itemsep=2ex, topsep=2ex}

\def\A{\mathcal{A}}
\def\I{\mathcal{I}}

\def\S{\mathcal{S}}
\def\C{\mathcal{C}}

\newcommand{\cH}{\mathcal{H}}

\newcommand{\ex}{\mathrm{ex}}

\newcommand{\R}{\mathbb{R}}
\newcommand{\N}{\mathbb{N}}
\newcommand{\NN}{\mathbb{N}}

\renewcommand{\P}{\mathcal{P}}
\newcommand{\al}{a}
\newcommand{\be}{b}
\newcommand{\gam}{\gamma}

\newcommand{\sig}{\sigma}
\newcommand{\ep}{\varepsilon}

\newcommand{\Om}{\Omega}

\newcommand{\del}{\delta}
\newcommand{\Del}{\Delta}

\renewcommand{\l}{\left}
\renewcommand{\r}{\right}

\newcommand{\half}{\frac{1}{2}}
\newcommand{\quart}{\frac{1}{4}}

\newcommand{\sm}{\setminus}
\newcommand{\sub}{\subseteq}

\renewcommand{\c}[1]{\mathcal{#1}}

\newcommand{\tr}[1]{\textrm{#1}}

\renewcommand{\SS}[1]{\textcolor{red}{#1}}

\newcommand{\ve}{\varepsilon}

\title{Random Tur\'an Problems for Graphs with a Vertex Complete to One Part.}
\author{Sean Longbrake \footnote{Dept.\ of Mathematics, Emory University, sean.longbrake@emory.edu }\and Sam Spiro\footnote{Dept.\ of Mathematics and Statistics, Georgia State University, sspiro@gsu.edu}}
\date{\today}

\begin{document}

\maketitle


\begin{abstract}
    Given a graph $F$, the random Tur\'an problem asks to determine the maximum number of edges in an $F$-free subgraph of $G_{n,p}$.  Prior to this work, the only bipartite graphs $F$ with known tight bounds included certain classes of complete bipartite graphs and theta graphs.  We greatly expand upon these examples by proving tight bounds for a number of bipartite graphs which have a vertex complete to one part.  We also prove new general upper bounds for this problem which in many cases do significantly better than the only previous known general upper bound due to Jiang and Longbrake.  Our proofs utilize dependent random choice together with the recent technique of balanced vertex supersaturation in conjunction with hypergraph containers.
    
\end{abstract}

\section{Introduction}
This paper centers around probabilistic analogs of the Tur\'an problem.  Recall that the \textit{Tur\'an number} $\ex(n,F)$ of a graph $F$ is defined to be the maximum number of edges that an $n$-vertex $F$-free graph can have.  To define our random variant, we let $G_{n,p}$ denote the random graph on $n$ vertices obtained by including each possible edge independently and with probability $p$.  We then define the \textit{random Tur\'an number} $\ex(G_{n,p},F)$ to be the maximum number of edges in an $F$-free subgraph of $G_{n,p}$.  Note that when $p=1$ we have $\ex(G_{n,1},F)=\ex(n,F)$, so the random Tur\'an number can be viewed as a probabilistic analog of the classical Tur\'an number.

The asymptotics for $\ex(G_{n,p},F)$ are essentially known if $F$ is not bipartite due to independent breakthrough work of Conlon and Gowers \cite{conlon2016combinatorial} and of Schacht \cite{schacht2016extremal}.  Because of this, we focus primarily on the degenerate case when $F$ is bipartite where much less is known.

The bipartite random Tur\'an problem was originally studied in the case when $F$ was an even cycle.  Some initial results in this direction were given by Haxell, Kohayakawa, and \L uczak \cite{haxell1995turan} and Kohayakawa, Kreuter, and Steger \cite{kohayakawa1998extremal}, with a major breakthrough coming from work of Morris and Saxton~\cite{morris2016number} who proved essentially tight bounds for this problem assuming some well known conjectures on the maximum size of graphs with large girth.  In addition to this, \cite{morris2016number} went on to essentially solve the random Tur\'an problem for complete bipartite graphs $K_{r,t}$ in the following sense.  Here we say that a sequence of events $A_n$ holds \textit{asymptotically almost surely} or \textit{a.a.s.}\ for short if $\Pr[A_n]\to 1$ as $n\to \infty$, and we write $f(n)\ll g(n)$ if $f(n)/g(n)\to 0$ as $n\to \infty$.  
\begin{thm}[\cite{morris2016number}]\label{thm:Kst}
    If $\ex(n,K_{r,t})=\Theta(n^{2-1/r})$, then a.a.s.
    \[\ex(G_{n,p},K_{r,t})=\begin{cases}
        \Theta(p^{1-1/r}n^{2-1/r}) &  n^{-\frac{r-1}{rt-1}}(\log n)^{O(1)} \le p \\ 
        n^{2-\frac{r+t-2}{rt-1}}(\log n)^{\Theta(1)} & n^{-\frac{r+t-2}{rt-1}}\ll p\le n^{-\frac{r-1}{rt-1}}(\log n)^{O(1)},\\ 
        (1+o(1))p{n\choose 2} & n^{-2}\ll p \ll n^{-\frac{r+t-2}{rt-1}}.
    \end{cases}\]
\end{thm}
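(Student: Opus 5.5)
Three of the four bounds are elementary. For the lower bounds I would use the following constructions.

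In the sparsest range $p\ll n^{-\frac{r+t-2}{rt-1}}$, the expected number of copies of $K_{r,t}$ in $G_{n,p}$ is $\Theta(n^{r+t}p^{rt})$. This is $o(pn^2)$ exactly when $p^{rt-1}\ll n^{-(r+t-2)}$. Since $e(G_{n,p})$ is concentrated around $p\binom n2$ once $pn^2\to\infty$, deleting one edge from each copy gives an $F$-free subgraph with $(1+o(1))p\binom n2$ edges. This is also trivially an upper bound.

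In the top range I would intersect $G_{n,p}$ with a fixed $K_{r,t}$-free extremal graph on $n$ vertices with $\Theta(n^{2-1/r})$ edges. That gives $\Theta(pn^{2-1/r})$ edges, which is weaker than the claimed $p^{1-1/r}n^{2-1/r}$. So I would instead blow up: take an extremal $K_{r,t}$-free graph $H$ on $m$ vertices, place it on a random partition of $[n]$ into $m$ parts of size $n/m$, and keep one $G_{n,p}$-edge between parts $i,j$ whenever $ij\in E(H)$. Choosing $m$ with $p(n/m)^2\approx 1$, i.e.\ $m=p^{1/2}n$, gives about $m^{2-1/r}$ edges, and avoiding $K_{r,t}$ requires a matching-type selection. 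Alternatively, following Morris--Saxton, one can take a random subgraph of $G_{n,p}$ intersected with an algebraic norm-graph-type construction and clean it up. I would check this gives $p^{1-1/r}n^{2-1/r}$ down to $p=n^{-\frac{r-1}{rt-1}}$.

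In the middle range, monotonicity in $p$ (by coupling) carries the value $n^{2-\frac{r+t-2}{rt-1}}$ from the endpoint $p=n^{-\frac{r+t-2}{rt-1}}$ upward, up to log factors.

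For the upper bounds I would use hypergraph containers. Let $\mathcal H$ be the $rt$-uniform hypergraph on $E(K_n)$ whose edges are the copies of $K_{r,t}$. The key input is a balanced supersaturation theorem: every $n$-vertex graph with $e(G)=m\ge C n^{2-1/r}$ contains a collection of copies of $K_{r,t}$, numbering about $m^{rt}/n^{2rt-r-t}$, such that no set of $j$ edges lies in more than roughly $\Delta_j$ of them. Here $\Delta_j$ is the natural count
\[\max\bigl(m^{rt-j}/n^{2rt-r-t-\text{vertices spanned}},\,\dots\bigr).\]
I would prove this with a Kővári--Sós--Turán double-counting argument, using dependent random choice on $r$-sets together with iterative deletion of saturated sub-configurations.

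Feeding this into the container lemma iteratively yields a family of $\exp\bigl(O(n^{2-1/r}\tau\log n)\bigr)$ containers, each with $O(m)$ edges, covering all $F$-free graphs. Taking a union bound over containers and over subsets of size $k$ in $G_{n,p}$ bounds the probability by $\binom{m}{k}p^k\cdot\#\text{containers}$. Optimizing $m$ and $k$ gives the bound $p^{1-1/r}n^{2-1/r}$ in the dense range and $n^{2-\frac{r+t-2}{rt-1}}\mathrm{polylog}$ in the middle range.

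I expect the main obstacle to be the balanced supersaturation with the correct codegree bounds $\Delta_j$ for all $j$, especially for sets of edges spanning few vertices on the $t$-side. Getting exactly the exponent $\frac{r-1}{rt-1}$ depends on these bounds. My first attempt there would be to count $K_{r,t}$ via $r$-sets with large common neighbourhoods. I would then greedily choose copies, discarding any pair $(S,T)$ that would exceed the threshold, and argue that at most half of the candidates are lost.
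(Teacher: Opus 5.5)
Your outline has the right shape: deletion below $n^{-1/m_2}$, monotonicity for the flat range, a blow-up for the dense lower bound, and balanced supersaturation plus containers for the upper bounds. This matches how the paper recovers the theorem. The upper bounds come from \Cref{thm:maxDegree} with $F=K_{r,t}$, $S$ the $r$-side and $\Del=t$, followed by monotonicity down from $p_0=n^{-\frac{r-1}{rt-1}}(\log n)^C$. The lower bounds come from \Cref{prop:nieSpiroEasyRanges} and \Cref{prop:KrtLower}. However, three steps fail as you state them.

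\textbf{Dense lower bound.} With $m=p^{1/2}n$ and $O(1)$ edges kept per pair of parts, you get $m^{2-1/r}=p^{1-\frac{1}{2r}}n^{2-1/r}$ edges, which is short of the target by a factor $p^{1/(2r)}$. Take $m=pn$ instead. Then parts have size $1/p$ and each $G_{n,p}[V_i,V_j]$ has average degree about $1$. For each $ij\in E(H)$, keep a maximum matching of $G_{n,p}[V_i,V_j]$; it has $\Theta(1/p)$ edges, since the edges whose endpoints both have degree one already give this many. The total is $\Theta(e(H)/p)=\Theta(p^{1-1/r}n^{2-1/r})$, and it is concentrated because distinct pairs of parts are independent. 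The result is $K_{r,t}$-free for the following reason. Each vertex has at most one neighbour per part. Any two vertices on the same side of a $K_{r,t}$ share a neighbour, so they lie in different parts, and adjacent vertices also lie in different parts. Hence a copy would project injectively onto a $K_{r,t}$ in $H$. This works for all $p\gg n^{-1}\log n$. The exponent $\frac{r-1}{rt-1}$ marks where the upper bound stops being tight, not where the construction fails.

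\textbf{Upper bound.} The codegree target cannot hold. Suppose every $\sigma$ spanning $F[\nu]$ had the natural codegree $O(q^{rt-e(\nu)}n^{r+t-|\nu|})$, with $q=e(G)/n^2$ and $e(G)\ge Cn^{2-1/r}$ (your displayed exponent of $n$ is also off). Then the $\tau$ of \Cref{cor:tauFormulation}, with $e(\nu)$ in place of $f(\nu)$, would equal $\max_\nu n^{2-\frac{|\nu|-2}{e(\nu)-1}}=n^{2-1/m_2}$ for every $q$. Containers at $e(G)=Cn^{2-1/r}$ (\Cref{Lemma:General Random Turan}) would then give $\ex(G_{n,p},K_{r,t})=O(pn^{2-1/r})$ for all $p\ge n^{1/r-1/m_2}(\log n)^{C'}$. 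Since $m_2(K_{r,t})=\frac{rt-1}{r+t-2}<r$, this contradicts the lower bound $\Omega(p^{1-1/r}n^{2-1/r})$ at $p=n^{-\ep}$ with $0<\ep<\frac{1}{m_2}-\frac{1}{r}$.

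So some configurations must be allowed larger codegree. For a cherry $y_1xy_2$ with $x$ on the $r$-side, the paper's bound is $q^{rt-r-1}n^{r+t-3}$, since $f(\nu)=r+1>2=e(\nu)$. This term is what gives $\tau\approx q^{1-r}n$, hence the $p^{1-1/r}n^{2-1/r}$ law. The threshold is where this term meets the $n^{2-1/m_2}$ term, i.e.\ $\al(K_{r,t})\ge\frac{r-1}{rt-1}$ by \Cref{prop:alBound} with $\Del=t$. What makes the greedy dependent-random-choice argument work with these $f$-weighted thresholds is \Cref{lem:remove} together with the light/dangerous tuples in \Cref{inductivestep}.

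Finally, even with the right $\tau$, the plain union bound $\#\{\text{containers}\}\cdot\binom{m}{k}p^k$ loses a logarithm. Here $\log\#\{\text{containers}\}\approx\tau\log n$, not $n^{2-1/r}\tau\log n$ as you wrote. The bound only gives $O(p^{1-1/r}n^{2-1/r}(\log n)^{1/r})$ in the top range, not the claimed $\Theta$. To remove the logarithm, you must use that the fingerprints lie in $G_{n,p}$ and pay $p^{|S|}$ for them, as in Morris--Saxton and \Cref{Lemma:General Random Turan for Large p}.
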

We note that $\ex(n,K_{r,t})$ is known to equal $\Theta(n^{2-1/r})$ whenever $t$ is sufficiently large in terms of $r$, so this gives essentially tight bounds for $\ex(G_{n,p},K_{r,t})$ whenever $t$ is large.

Since the breakthrough work of \cite{morris2016number} which appeared over a decade ago, the only additional tight examples that have been proven has come from recent work of McKinley and Spiro~\cite{mckinley2023random} who solved the problem for certain classes of theta graphs.  While no other tight bounds are known for specific graphs, there are a few general bounds known for the random Tur\'an problem.  For example, Spiro~\cite{spiro2024random} proved effective lower bounds on $\ex(G_{n,p},F)$ whenever $F$ is a power of a balanced rooted tree.  For upper bounds, the most general result is the following due to 
Jiang and Longbrake~\cite[Theorem 2.5]{jiang2022balanced}.

\begin{thm}[(Informal) \cite{jiang2022balanced}]\label{thm:JiangLongbrake}
    If $F$ is a graph with $\ex(n,F)=\Theta(n^{2-\alpha})$, then there exists some $\ep>0$ such that a.a.s.\ $\ex(G_{n,p},F)=O(p^{1-\ep}n^{2-\alpha})$ for all $p$ sufficiently large.  Moreover, if $F$ satisfies certain supersaturation conditions, then for all $p$ sufficiently large we have a.a.s.\ 
    \[\ex(G_{n,p},F)=O(p^{1-m_2^*(F)\alpha} n^{2-\alpha}),\]
    where
    \[m_2^*(F):=\max_{F'\subsetneq F,\ v(F')\ge 3} \frac{e(F')-1}{v(F')-2}.\]
\end{thm}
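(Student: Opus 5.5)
The natural route to Theorem~\ref{thm:JiangLongbrake} is the hypergraph container method driven by a \emph{balanced supersaturation} lemma. Let $\cH$ be the $e(F)$-uniform hypergraph on vertex set $E(K_n)$ whose hyperedges are the edge sets of copies of $F$. Independent sets of $\cH$ are exactly the $F$-free graphs on $[n]$. The plan has three steps. First, prove that every $n$-vertex graph $G$ with $e(G)=m\ge C n^{2-\alpha}$ admits a collection $\c{F}_G$ of copies of $F$ that is well spread. Second, feed this into the container theorem of Balogh--Morris--Samotij and Saxton--Thomason. Third, finish with a first moment count in $G_{n,p}$.

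For the balanced supersaturation step, the target is a family $\c{F}_G$ with $|\c{F}_G|\ge \delta m (m/n^{2-\alpha})^{\beta}$ for some $\beta>0$, such that for every $\sigma\subseteq E(G)$ with $|\sigma|=j$ the co-degree satisfies $d(\sigma)\le \tau^{j-1}|\c{F}_G|/m$. Here $\tau$ is a small power of $n^{2-\alpha}/m$ times a polylog. For the general bound with some $\ep>0$, I would use only the extremal hypothesis. I would build $\c{F}_G$ greedily, adding copies of $F$ that do not saturate any set $\sigma$. If no more copies can be added, delete all edges lying in saturated sets. A counting argument shows only a constant fraction of edges is lost, so the remaining graph still has more than $\ex(n,F)$ edges and contains an unsaturated copy, a contradiction. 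This yields some $\beta>0$, and hence some $\ep>0$. Under the stronger supersaturation hypothesis, namely that $G$ contains about $n^{v(F)}(m/n^2)^{e(F)}$ copies of $F$ with codegrees controlled for each subgraph $F'$, the same greedy scheme gives co-degree bounds in which the worst $j$ is governed by $\max_{F'}(e(F')-1)/(v(F')-2)=m_2^*(F)$.

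For the container step, iterating the container lemma gives a family $\c{C}$ of containers with the following properties. Each $F$-free graph is contained in some $C\in\c{C}$, each $C$ has at most $Kn^{2-\alpha}$ edges, and $\log|\c{C}|\lesssim \tau n^2\log n$, summed over the iteration. Then, by a union bound,
\[\Pr\bigl[\ex(G_{n,p},F)\ge t\bigr]\le |\c{C}|\binom{Kn^{2-\alpha}}{t}p^{t}\le \exp\bigl(\tau n^2\log n\bigr)\Bigl(\tfrac{eKn^{2-\alpha}p}{t}\Bigr)^{t}.\]
Choosing $t=Ap^{1-\theta}n^{2-\alpha}$ makes this $o(1)$ as long as $\tau n^2\log n\ll t$. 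Balancing $\tau$ against $m$ gives $\theta=\ep$ in the general case and $\theta=m_2^*(F)\alpha$ in the supersaturated case, for $p$ above a threshold at which the polylogarithmic losses are absorbed.

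I expect the balanced supersaturation step to be the main obstacle. Controlling all co-degrees simultaneously, rather than merely counting copies, is what determines the exponent. Obtaining exactly $m_2^*(F)\alpha$ requires a careful choice of the codegree thresholds at each level $j$, optimized over all proper subgraphs $F'$.
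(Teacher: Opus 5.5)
The paper does not prove this statement: it is quoted informally from Jiang and Longbrake. The paper only runs the same three-step template (balanced supersaturation, containers, first moment) for its own \Cref{thm:alpha} and \Cref{thm:beta}. Your outline follows that template, but the step that carries all of the content fails as you describe it.

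When your greedy process stalls, double counting only bounds the number of saturated $j$-sets by $\binom{e(F)}{j}|\mathcal{F}_G|/(\tfrac12\Delta_j)$. With your thresholds $\Delta_j=\tau^{j-1}|\mathcal{F}_G|/m$, this is of order $m\tau^{1-j}$, which is far larger than $m$ for every $j\ge 2$. That is unavoidable, because $\tau\to 0$ is exactly what the container lemma needs. So the edges lying in saturated pairs, triples, and so on may well be all of $E(G)$, and \emph{delete them and use $e(G')>\ex(n,F)$} gives nothing. That deletion argument is affordable only for saturated single edges; this is exactly \Cref{standcount}(a) and the first cleaning claim in \Cref{inductivestep}.

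For larger saturated sets you must instead show that, among many copies of $F$ in a cleaned host, only a small fraction contain a saturated set. This needs two ingredients absent from your sketch:
\begin{itemize}
\item a lower bound on the number of copies. From $\ex(n,F)=O(n^{2-\alpha})$ alone this is available only in a weak form, e.g.\ by random vertex sampling, which is why only some unspecified $\ep>0$ comes out in general;
\item an upper bound on the number of ways a fixed saturated substructure extends to a copy. This fails in arbitrary hosts and requires regularizing the host. Compare, in the proof of \Cref{inductivestep}, the passage to $H$ with every vertex of $Y$ of degree exactly $\rho m$, and the extension counts of \Cref{double counting}(b) and \Cref{standcount}(b),(c).
\end{itemize}
For the \emph{moreover} part, postulating that codegrees are controlled for each $F'$ simply assumes the balanced supersaturation you need. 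Nothing in the sketch explains why the exponent is governed by proper subgraphs, i.e.\ by $m_2^*(F)$ rather than $m_2(F)$.

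The container step also needs repair. Suppose you iterate until every container has at most $Kn^{2-\alpha}$ edges. The last rounds then run on graphs with about $Kn^{2-\alpha}$ edges, where your $\tau$ is only a constant (about $K^{-c}$, up to polylogarithms). So those fingerprints have size $\Theta(n^{2-\alpha})$ and $\log|\mathcal{C}|=\Omega(n^{2-\alpha})$. This swamps $t=Ap^{1-\theta}n^{2-\alpha}$ whenever $p=o(1)$, so your requirement $\log|\mathcal{C}|\ll t$ fails for that family.

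The iteration has to be stopped at a $p$-dependent size $m(p)$ at which the fingerprint size $\tau(m)m$ matches $pm$. With $\tau(m)\approx(n^{2-\alpha}/m)^{\beta}$, this gives $m(p)\approx p^{-1/\beta}n^{2-\alpha}$ and the bound $p^{1-1/\beta}n^{2-\alpha}$; this is where $\theta$ really comes from. Compare the choice $q=C'p^{-1/r}n^{-1/r}$ in the proof of \Cref{thm:alpha}.

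Finally, the crude count $\log|\mathcal{C}|\lesssim \tau m\log n$ costs a factor $(\log n)^{\Theta(1)}$ in the final bound. It is the same relative loss at every $p$, so it is not absorbed by taking $p$ large. Removing it to obtain a genuine $O(p^{1-\theta}n^{2-\alpha})$ requires tracking the fingerprints in the first-moment computation. This is the Morris--Saxton argument with \Cref{NetoMorris} that the paper carries out in its appendix for \Cref{Lemma:General Random Turan for Large p}.
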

The bounds of \Cref{thm:JiangLongbrake} give a much simpler proof of the results of Morris and Saxton~\cite{morris2016number} for even cycles, though it is conjectured that the bounds of \Cref{thm:JiangLongbrake} are not tight whenever $F$ contains at least two cycles.

The results stated above capture every known result concerning the random Tur\'an problem for bipartite graphs, though we note that more can be said regarding the analogous problem of random Tur\'an numbers for degenerate hypergraphs \cite{jiang2024number, mubayi2023random, nie2023random, nie2023tur, nie202X, nie2023sidorenko, spiro2021relative}.  Despite the relative lack of knowledge for bipartite random Tur\'an numbers, there exists a recent conjecture of McKinley and Spiro predicting how $\ex(G_{n,p},F)$ should behave for arbitrary bipartite graphs $F$.  For this we introduce the following definition.

\begin{defn}
    Given a graph $F$ on at least 3 vertices, we define the \textit{2-density} of $F$ by
    \[m_2(F)=\max_{F' \sub F: v(F')\ge 3} \frac{e(F')-1}{v(F')-2},\]
    and say that $F$ is \textit{2-balanced} if $m_2(F)=\frac{e(F)-1}{v(F)-2}$.
\end{defn}
Note that $m_2(F)$ is defined in the same way as $m_2^*(F)$ in \Cref{thm:JiangLongbrake} except that we allow for $F'=F$.

\begin{conj}[\cite{mckinley2023random}]\label{conj:MS}
    If $F$ is a graph with $\ex(n,F)=\Theta(n^{2-\alpha})$ for some $\alpha\in (0,1)$, then a.a.s.\
    \[\ex(G_{n,p},F)=  \begin{cases}\max\{\Theta(p^{1-\alpha}n^{2-\alpha}), n^{2-1/m_2(F)}(\log n)^{\Theta(1)}  \} & p\gg n^{-1/m_2(F)},\\ (1+o(1))p {n\choose 2} & n^{-2}\ll p\ll n^{-1/m_2(F)}.\end{cases}\]
\end{conj}
In particular, this conjecture predicts that $\ex(G_{n,p},F)$ should always have three ranges of behavior (i.e.\ it should roughly equal either $p^{1-\alpha}n^{2-\alpha},\ n^{2-1/m_2(F)}$, or $p{n\choose 2}$ depending on the value of $p$), and moreover, it predicts that one of these ranges will be a ``flat middle range,'' i.e.\  a range where $\ex(G_{n,p},F)=n^{2-1/m_2(F)}(\log n)^{\Theta(1)}$ is essentially independent of $p$ for a sizable range of $p$ close to $n^{-1/m_2(F)}$. 

Surprisingly, \Cref{conj:MS} has a close connection to Sidorenko's conjecture.  Very informally, a graph $F$ is \textit{Sidorenko} if every dense graph $G$ has about as many copies of $F$ as we would expect in the random graph with the same density of $G$, with Sidorenko\cite{sidorenko1986extremal, sidorenko1991inequalities} famously conjecturing that every bipartite graph is Sidorenko.  

In recent work, Nie and Spiro~\cite{nie2023sidorenko} showed that if $F$ is a 2-balanced bipartite graph which satisfies \Cref{conj:MS}, then $F$ must be Sidorenko. Given this, one can only hope to prove the tight bounds of \Cref{conj:MS} for (2-balanced) graphs $F$ in the case where it is already known that $F$ is Sidorenko.  While quite a lot of work has gone into proving various special cases of Sidorenko's Conjecture \cite{conlon2010approximate, conlon2018some, conlon2017finite, coregliano2021biregularity, fox2017local, hatami2010graph, kim2016two, li2011logarithimic, lovasz2011subgraph, szegedy2014information}, only a very limited set of bipartite graphs are known to be Sidorenko, and hence there are very few graphs $F$ for which we can attempt to prove the bounds of \Cref{conj:MS} for.

\section{Main Results}
Because we can only hope to prove tight bounds for $\ex(G_{n,p},F)$ when $F$ is known to be Sidorenko, we will focus our attention on certain classes of Sidorenko graphs.  Notably, an important result of Conlon, Fox, and Sudakov \cite{conlon2010approximate} proves that every bipartite graph which has a vertex complete to one part is Sidorenko, and it is graphs of this form that we will address in this paper.  We prove our results using a variety of techniques, such as dependent random choice, hypergraph containers, and the recently developed method of vertex balanced supersaturation.


Our most general results are somewhat technical to state, and as such, we postpone them to \Cref{sub:technical}.  Until then, we highlight some of the main applications of our results, including an improved bound compared to the general upper bound \Cref{thm:JiangLongbrake} for certain classes of graphs and values of $p$, as well as a new infinite family of graphs for which we can obtain tight bounds for all values of $p$.

\subsection{General Upper Bounds}
F\"uredi~\cite{furedi1991turan} proved that $\ex(n,F)=O(n^{2-1/r})$ whenever $F$ has a bipartition $S\cup T$ such that every vertex in $T$ has degree at most $r$, and this result was later given an elegant proof by Alon, Krivelevich, and Sudakov~\cite{alon2003turan} using dependent random choice.  The dependent random choice proof of this result in fact further implies that this same bound continues to hold even if $T$ contains a small number of vertices of degree larger that $r$.  Our first main result is a probabilistic analogue of this bound in the case where we allow up to 1 vertex to have degree larger than $r$.  This gives the only other general upper bound for the bipartite random Tur\'an problem beyond that of \Cref{thm:JiangLongbrake}, with this new bound having the additional advantage of being provably tight for a large number of graphs.

\begin{thm}\label{thm:maxDegree}
    Let $F$ be a bipartite graph and $r,\Del \ge 2$ integers such that $F$ has a bipartition $S\cup T$ and a vertex $v^*\in T$ satisfying that every vertex in $T\sm v^*$ has degree at most $r$ and every $u\in S$ has $|N_F(u)\cup \{v^*\}|\le \Del$.  Then there exists some $C$ such that a.a.s.
    \[\ex(G_{n,p},F)=O(p^{1-1/r} n^{2-1/r})\ \tr{ for all }\ p\ge n^{-\frac{r-1}{r\Del-1}}(\log n)^C.\]
    If moreover $F$ contains a subgraph isomorphic to a complete bipartite graph $K_{r,t}$ satisfying $\ex(n,K_{r,t})=\Theta(n^{2-1/r})$, then a.a.s.
    \[\ex(G_{n,p},F)=\Theta(p^{1-1/r} n^{2-1/r})\ \tr{ for all }\ p\ge n^{-\frac{r-1}{r\Del-1}}(\log n)^C.\]
\end{thm}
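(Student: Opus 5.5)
Following the abstract, the plan is to prove a balanced supersaturation result for $F$ via dependent random choice and feed it into the hypergraph container method, as in \cite{morris2016number} and \cite{jiang2022balanced}. The lower bound in the second part is immediate. If $K_{r,t}\subseteq F$, then any $K_{r,t}$-free subgraph of $G_{n,p}$ is $F$-free. By \Cref{thm:Kst}, a.a.s.\ $\ex(G_{n,p},K_{r,t})=\Theta(p^{1-1/r}n^{2-1/r})$ for $p\ge n^{-\frac{r-1}{rt-1}}(\log n)^{O(1)}$. Since $v^*$ is adjacent to every $u\in S$ that lies in the copy of $K_{r,t}$ (or $v^*$ is in that copy), we have $|N_F(u)\cup\{v^*\}|\ge t$ for suitable $u$. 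Hence $t\le \Del$, so $n^{-\frac{r-1}{r\Del-1}}\ge n^{-\frac{r-1}{rt-1}}$ and the range of $p$ is covered. (If needed, use the $K_{r,t}$ side with the larger part appropriately; the key point is that some vertex of degree $\ge t$ forces $\Del\ge t$.) The remaining work is the upper bound.

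\textbf{Step 1: balanced supersaturation.} Let $G$ be an $n$-vertex graph with $e(G)=kn^{2-1/r}$ for $k$ large. We want a collection $\mathcal{H}$ of copies of $F$ in $G$ that is well spread. Concretely, every edge set $\sigma$ with $|\sigma|=j$ should lie in at most roughly
\[|\mathcal{H}|/e(G)\cdot \bigl(k^{-(r\Del-1)/(r-1)}\bigr)^{j-1}\]
copies in $\mathcal{H}$ (up to polylog factors). Here $|\mathcal{H}|$ should be about $k^{\Theta(e(F))}$ times the trivial count.

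To build $\mathcal{H}$, fix $v^*$ in $F$ and embed it at a vertex $x$ of $G$. Inside $N_G(x)$, apply dependent random choice with $r$ random vertices to find a large set $U\subseteq N_G(x)$ in which every $r$-set has many common neighbors. Embed $S$ into $U$; this handles all edges from $S$ to $v^*$. Then embed each $w\in T\sm v^*$ greedily into common neighborhoods of at most $r$ vertices.

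To keep codegrees controlled, restrict to a regularized subgraph: pass to a subgraph where degrees are about $kn^{1-1/r}$, and cap common-neighborhood sizes by deleting overused $r$-sets. Then count the copies through a fixed set of edges one vertex at a time. The constraint $|N_F(u)\cup\{v^*\}|\le\Del$ is exactly what ensures each $u\in S$, when fixed, contributes at most $\Del$ edges. This should yield the exponent $\frac{r-1}{r\Del-1}$, in analogy with $t$ in \Cref{thm:Kst}.

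\textbf{Step 2: containers and union bound.} Feed the supersaturation into the Morris--Saxton container lemma. This produces a family $\mathcal{C}$ of at most $\exp\bigl(O(n^{2-1/r}k^{-\frac{r\Del-r}{r-1}}\cdots\log n)\bigr)$ containers, each with $O(kn^{2-1/r})$ edges, such that every $F$-free graph lies in some container. Iterate down to $k\approx p^{-1/r}$. Then a union bound over containers, using Chernoff to bound $e(G_{n,p}\cap C)\le 2pe(C)$, gives $O(p^{1-1/r}n^{2-1/r})$. The failure probability is summable exactly when $p\ge n^{-\frac{r-1}{r\Del-1}}(\log n)^C$. This part is standard.

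\textbf{Main obstacle.} The hard part is Step 1: making dependent random choice \emph{balanced}. Dependent random choice naturally yields many copies, but a few edges (for instance, those at high-degree $x$ or at popular $r$-sets) may lie in far too many of them.

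My first attempt would be to choose $x$ uniformly among vertices of near-typical degree, after a Jiang--Longbrake style regularization that makes $G$ roughly $K$-almost-regular. Then I would bound codegrees of $\sigma$ by case analysis on whether $\sigma$ contains edges at $x$, and by how many $S$-vertices $\sigma$ touches. If regularization is unavailable, fall back on restricting $\mathcal{H}$ to copies avoiding ``heavy'' edges and $r$-sets, and show that these are a negligible fraction.
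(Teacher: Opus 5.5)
Your lower bound works (one does have $t\le\Del$, though your justification is loose; the paper sidesteps this by quoting \Cref{prop:KrtLower}, valid for all $p\gg n^{-1}\log n$). The upper bound, however, has genuine gaps.

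First, the codegree bound you target in Step 1 is not the right one. Write $q=e(G)/n^2=kn^{-1/r}$. The per-edge factor the paper proves is $\tau/e(G)\approx (q^rn)^{-1}=k^{-r}$, and this is also the factor the container computation needs in order to output $p^{1-1/r}n^{2-1/r}$. Your factor $k^{-(r\Del-1)/(r-1)}$ never equals $k^{-r}$, and it can be too strong to hold. For $F=C_4$ ($r=\Del=2$) and $\sig$ a whole copy, it forces $\deg_{\mathcal H}(\sig)\lesssim k^{3}n^{1/2}\cdot k^{-9}<1$ once $k>n^{1/12}$, whereas the theorem needs $k=p^{-1/2}$ up to $n^{1/6}$.

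The quantity $\frac{r-1}{r\Del-1}$ enters not as a codegree exponent but as a lower bound on $\al(F)$. That parameter determines the range $q\le n^{(\al(F)-1)/r}$, equivalently $p\ge n^{-\al(F)}$, on which $\tau\le q^{1-r}n$ holds. Establishing this requires an inequality for every $\nu\sub V(F)$ with $\del(F[\nu])\ge 1$; this is the paper's $\al(\nu)\ge\frac{r-1}{r\Del-1}$, \Cref{prop:alBound}. Your heuristic that each $u\in S$ contributes at most $\Del$ edges does not supply this. Indeed, the threshold $n^{-\frac{r-1}{r\Del-1}}$ does not come out of your Step 2 arithmetic at all.

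Moreover, with the correct $\tau$ one has $\tau\approx p^{1-1/r}n^{2-1/r}$ at the final scale $k\approx p^{-1/r}$. So the fingerprints are as large as the target, and a union bound over containers with Chernoff loses a logarithmic factor. The paper needs the fingerprint form of the container argument (\Cref{Lemma:General Random Turan for Large p}, following Morris--Saxton), which charges $p^{|S|}$ for a fingerprint $S$ to appear in $G_{n,p}$. Your Step 2 only appears to close because of the inflated exponent.

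Second, the construction of the balanced family, which you rightly call the main obstacle, is left open. Dependent random choice in $N(x)$ followed by greedy embedding gives many copies but no control on how often a partial embedding is reused. The missing idea is to balance at the level of vertex sets, with thresholds $D(\nu)=\del^{-|\nu|}q^{e(F)-f(\nu)}n^{v(F)-|\nu|}$. The weight $f(\nu)$ is designed (\Cref{lem:remove}) so that $D(\nu\sm\{u\})/D(\nu)=\del qn$ for $u\in S$, and $D(\nu\sm\{v\})/D(\nu)=\del q^{\deg_F(v)}n$ for a suitable $v\in T\sm\{v^*\}$.

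One then takes a maximal family respecting all thresholds and shows it is large (\Cref{inductivestep}). In a degree-regularized bipartite subgraph:
\begin{itemize}
\item embed $v^*$ at an ``embeddable'' vertex;
\item embed $S$ into a tuple of its neighbors with no ``light'' or ``dangerous'' sub-tuples;
\item embed the rest of $T$ into pruned common neighborhoods.
\end{itemize}
Double counting plus a case analysis on $\nu$ shows only an $O(\del)$-fraction of these copies have a saturated restriction, with saturated edges and $T$-stars excluded by construction.

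Finally, in this theorem $v^*$ need not be complete to $S$, as you assume. The paper first adds the missing $v^*$--$S$ edges to get an $r$-semi-bounded $F'\supseteq F$ and applies \Cref{thm:alpha} to $F'$.
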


This result exactly recovers the (non-trivial) upper bounds of \Cref{thm:Kst} for complete bipartite graphs by taking $F=K_{r,\Del}$.  It also does much better than the general upper bound \Cref{thm:JiangLongbrake} whenever $\ex(n,F)=\Theta(n^{2-1/r})$.  Indeed, in this case one can check that \Cref{thm:JiangLongbrake} always gives an upper bound of the form $O(p^{1-1/r-\ep}n^{2-1/r})$ for some $\ep>0$ whenever $F$ contains at least two cycles, showing that our bound on $\ex(G_{n,p},F)$ is qualitatively better in this case. If moreover the graph $F$ is assumed to contain a copy of $K_{r,r}$ and at least one other edge (which is roughly conjectured to be the only way a graph as in \Cref{thm:maxDegree} can satisfy $\ex(n,F)=\Theta(n^{2-1/r})$ \cite[Conjecture 1.2]{conlon2021extremal}), then one can check that the bound of \Cref{thm:JiangLongbrake} is at most $O(p^{\frac{1}{2}-\frac{1}{2r}}n^{2-1/r})$ which is substantially weaker than the bound $O(p^{1-1/r} n^{2-1/r})$ coming from our theorem.

Our methods can also be used to give effective bounds for small values of $p$ provided that $F$ has a vertex complete to one side and satisfies a certain ``balanced'' condition.  
\begin{thm}\label{thm:upperSmallp}
    Let $F$ be a graph with $e(F)\ge 2$ which has a bipartition $S\cup T$ such that there exists a vertex $v^*\in T$ which is adjacent to every vertex in $S$, and such that any set $\nu\sub V(F)$ with $|\nu|\ge 3$ and $\frac{e(F[\nu])-1}{|\nu|-2}=m_2(F)$ has $S\sub \nu$.  Then a.a.s.\ 
    \[\ex(G_{n,p},F)=n^{2-1/m_2(F)}(\log n)^{\Theta(1)} \ \textrm{for all }\  n^{\frac{-1}{m_2(F)}}\ll p \le n^{\frac{1}{e(F)^2}-\frac{1}{m_2(F)}}.\]
\end{thm}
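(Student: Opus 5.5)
The plan is to prove matching lower and upper bounds. The lower bound $\ex(G_{n,p},F)\ge n^{2-1/m_2(F)}(\log n)^{-O(1)}$ for $p\gg n^{-1/m_2(F)}$ follows from the standard deletion argument. Let $F'\sub F$ attain $m_2(F)$, and pass to $G_{n,q}\sub G_{n,p}$ with $q$ a small constant times $n^{-1/m_2(F)}$. Then the expected number of copies of $F'$ is $O(n^{v(F')}q^{e(F')})$, which is at most a small fraction of $qn^2$. Deleting one edge from each copy therefore leaves an $F$-free subgraph with $\Omega(n^{2-1/m_2(F)})$ edges a.a.s., by concentration of $e(G_{n,q})$ and Markov's inequality. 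Since the upper bound only needs to hold up to polylog factors, the real work is the upper bound $\ex(G_{n,p},F)\le n^{2-1/m_2(F)}(\log n)^{O(1)}$ at the top of the range, $p= n^{1/e(F)^2-1/m_2(F)}$. Monotonicity in $p$ (by coupling, $\ex(G_{n,p},F)$ is stochastically increasing in $p$) then handles all smaller $p$.

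For the upper bound I would use hypergraph containers together with balanced supersaturation. Let $\Hc$ be the $e(F)$-uniform hypergraph on $E(K_n)$ whose edges are the copies of $F$. The key step is a \emph{balanced supersaturation} lemma: every $n$-vertex graph $G$ with $e(G)=m\ge n^{2-1/m_2(F)}(\log n)^{C}$ contains a collection $\mathcal{F}$ of copies of $F$ such that, for each $1\le j\le e(F)$, every $j$-set of edges lies in at most roughly
\[
\frac{|\mathcal{F}|}{m}\Big(\frac{n^2}{m}\Big)^{(j-1)\,m_2(F)\,(\cdots)}
\]
members of $\mathcal{F}$, in the form required by the container lemma (as in Morris--Saxton and Jiang--Longbrake). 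Feeding this into the container theorem iteratively gives a family of at most $\exp(n^{2-1/m_2(F)}(\log n)^{O(1)})$ containers. Each container has at most $n^{2-1/m_2(F)}(\log n)^{O(1)}/p$ edges, or equivalently $\tau$ is chosen so that the number of edges of $G_{n,p}$ landing in a container is at most the target. A union bound with Chernoff then finishes the argument, and the restriction $p\le n^{1/e(F)^2-1/m_2(F)}$ is exactly what keeps $p\cdot|{\rm container}|$ in the flat regime.

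To build the balanced collection I would exploit $v^*$. First regularize $G$ (dyadic pigeonholing on degrees costs only a log factor). Then, for each vertex $x$ playing $v^*$, embed $S$ into $N_G(x)$ and embed $T\sm v^*$ by greedy or dependent-random-choice style counting, in the spirit of Conlon--Fox--Sudakov. Finally select copies randomly, or greedily deleting overloaded ones, so that no small edge set is in too many copies. The hypothesis that every set $\nu$ attaining $m_2(F)$ contains $S$ is what makes the codegree bounds work. For a $j$-edge subgraph $J$ of $F$, the number of extensions of a fixed copy of $J$ is controlled by the densities of subgraphs containing $J$. Subgraphs achieving the extremal 2-density must contain all of $S$, hence all their vertices see $v^*$, and for these the extension count can be computed directly from degrees into neighborhoods. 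For every other $\nu$ the density is strictly below $m_2(F)$, giving slack of a power $n^{\Omega(1/e(F)^2)}$. This slack is what absorbs the loss from $p$ exceeding $n^{-1/m_2(F)}$ by up to $n^{1/e(F)^2}$.

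I expect the main obstacle to be this balanced supersaturation step: controlling codegrees of all $j$-edge subsets simultaneously in a possibly irregular host graph. There I would first try deleting copies that contain an overloaded subgraph $J$ and showing that the deleted copies are a negligible fraction. That fraction is bounded using the strict inequality $\frac{e(F[\nu])-1}{|\nu|-2}<m_2(F)$ for $\nu\not\supseteq S$, and the exponent $1/e(F)^2$ is a crude lower bound on that gap, coming from denominators at most $v(F)^2\le e(F)^2$.
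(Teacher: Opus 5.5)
Your architecture matches the paper's route through \Cref{thm:beta}: a deletion lower bound, monotonicity in $p$, and containers fed by a $v^*$-centered dependent-random-choice balanced supersaturation, with no loss on extremal sets and $m_2$-slack elsewhere. However, the lemma that carries all the weight is left as a placeholder, so your ``slack absorbs the loss'' step has no content: you never say what the loss is. In the paper, the codegree bound the construction can actually guarantee is vertex-based. A partial embedding of $F[\nu]$ lies in at most $D(\nu)=\delta^{-|\nu|}q^{e(F)-f(\nu)}n^{v(F)-|\nu|}$ copies, where $f(\nu)=|S\cap\nu|+\sum_{v\in T\cap\nu}\deg_F(v)-\max_w\deg_F(w)$. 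This charges $q$ per vertex of $S$ and $q^{\deg_F(v)}$ per vertex of $T$, regardless of which edges lie inside $\nu$. Compared with the random-like $q^{e(F)-e(\nu)}$, the loss is $q^{-(f(\nu)-e(\nu))}$, where $q\approx p^{-1}n^{-1/m_2(F)}$ is the container density. This loss vanishes when $S\cup\{v^*\}\sub\nu$ (\Cref{lem:fvse}), so for extremal $\nu$ you also need $v^*\in\nu$ (\Cref{lem:includeComplete}). With $S\sub\nu$ but $v^*\notin\nu$ one has $f(\nu)-e(\nu)=|S|-\max_w\deg_F(w)$, which is typically positive. The exponent $1/e(F)^2$ then comes from $\be(\nu)\ge\frac{1}{(e(\nu')-1)(f(\nu)-e(\nu))}$ together with $f(\nu)\le e(F)$. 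In other words, a slack of $n^{1/(e(\nu')-1)}$ is spread against a loss exponent of up to $e(F)$; it does not come from ``denominators at most $v(F)^2$''.

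The method you suggest for the lemma would also fail. Regularizing degrees does not control common neighborhoods of $r$-tuples inside $N(x)$. In an arbitrary host graph, the copies found by dependent random choice can be concentrated on a few partial embeddings (e.g.\ a small, very dense piece of $G$). So neither random selection nor deleting copies that contain an overloaded sub-embedding is guaranteed to keep a constant fraction. The strict inequality $m_2(\nu)<m_2(F)$ is a property of $F$ and says nothing about this. The paper instead grows a $D$-good family greedily and shows (\Cref{inductivestep}) that while it has fewer than $\varepsilon q^{e(F)}n^{v(F)}$ members, one can add a copy with no saturated partial embedding. Saturated edges are few by double counting. Then $v^*$ is mapped to an ``embeddable'' vertex, $S$ to a tuple with no light or dangerous sub-tuple, and each $v\in T\sm\{v^*\}$ into a set $N'(\cdot)$ avoiding saturated $T$-stars. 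All other $\nu$ are handled by peeling vertices as in \Cref{lem:remove}, which is exactly where $f$ comes from.

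Your supersaturation threshold $m\ge n^{2-1/m_2(F)}(\log n)^C$ is also false: for $F=K_{2,2}$ one has $n^{4/3}\ll\ex(n,F)$. One needs $m\ge Cn^{2-1/r}$. The upper limit on $p$ is also what keeps the final containers, of size about $p^{-1}n^{2-1/m_2(F)}$, above this threshold; this is the $n^{1/r-1/m_2(F)}$ term in \Cref{thm:beta} with $r=e(F)$. A minor point: with $c$ a fixed constant, Markov only gives constant failure probability in your deletion argument. Let $c\to0$ slowly instead, which the polylog slack allows.
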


In particular, this result holds whenever $F$ both has a vertex complete to one side and is \textit{strictly 2-balanced}, meaning that $\frac{e(F[\nu])-1}{|\nu|-2}=m_2(F)$ is satisfied only for $\nu=V(F)$.

\subsection{Tight Examples}
We next provide new infinite families of graphs $F$ for which we can prove tight bounds for $\ex(G_{n,p},F)$ at all values of $p$.  
The first set of graphs that we prove these bounds for will be of the following form.

\begin{defn}
    Given a multigraph $M$ without loops, we define the graph $F_M$ by subdividing each edge of $M$ and then adding a new vertex $v^*$ which is made adjacent to every vertex in $V(M)$.    
\end{defn}
\begin{figure}
\[\begin{tikzpicture}[scale=2,
    vertex/.style={circle, fill=black, inner sep=1.5pt}
]

\node[vertex] (a) at (90:1) {};
\node[vertex] (b) at (210:1) {};
\node[vertex] (c) at (330:1) {};

\node[vertex] (d) at (0,0) {};

\node[vertex] at (0,-.5) {};
\node[vertex] at (.57,.37) {};
\node[vertex] at (.43,.24) {};
\node[vertex] at (-.57,.37) {};
\node[vertex] at (-.43,.24) {};
\node[vertex] at (.29,.13) {};

\draw (a) -- (b);
\draw (b) -- (c);
\draw (a) -- (c);

\draw (a) -- (d);
\draw (b) -- (d);
\draw (c) -- (d);

\draw[bend left=20] (a) to (c);
\draw[bend right=20] (a) to (c);

\draw[bend right=20] (a) to (b);

\end{tikzpicture}\]
\caption{A depiction of the graph $F_M$ when $M$ is a triangle with edge multiplicities 1, 2, and 3.  By \Cref{thm:multigraph}, this graph is a new tight example for the random Tur\'an Problem.}
\end{figure}
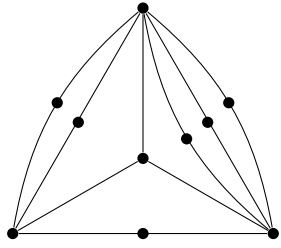

We give tight bounds for $F_M$ (which also match the bounds predicted by \Cref{conj:MS}) whenever $M$ satisfies a certain balanced condition.
\begin{thm}\label{thm:multigraph}
    If $M$ is a multigraph with $e(M)\ge 1$ such that
    \[\max_{\mu\sub V(M),\ |\mu|\ge 2}\frac{e(M[\mu])}{|\mu|-1}=\frac{e(M)}{v(M)-1},\]
    then a.a.s.
    \[\ex(G_{n,p},F_M)=\begin{cases}
        \Theta(p^{1/2}n^{3/2}) &  n^{-\frac{v(M)-1}{v(M)+2e(M)-1}}(\log n)^{O(1)}\le p,\\ 
        n^{2-\frac{v(M)+e(M)-1}{v(M)+2e(M)-1}}(\log n)^{\Theta(1)} & n^{-\frac{v(M)+e(M)-1}{v(M)+2e(M)-1}}\ll p\le n^{-\frac{v(M)-1}{v(M)+2e(M)-1}}(\log n)^{O(1)},\\ 
        (1+o(1))p{n\choose 2} & n^{-2}\ll p\ll n^{-\frac{v(M)+e(M)-1}{v(M)+2e(M)-1}}.
    \end{cases}\]

\end{thm}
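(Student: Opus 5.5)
Throughout, write $v=v(M)$ and $e=e(M)$. The first step is to record the parameters of $F_M$. It has $v+e+1$ vertices and $2e+v$ edges, with bipartition $S=V(M)$ and $T=\{v^*\}\cup\{\text{subdivision vertices}\}$. Every vertex of $T\sm v^*$ has degree $2$, so we are in the setting $r=2$. We then check, using the hypothesis on $M$, that $F_M$ is 2-balanced with
\[m_2(F_M)=\frac{2e+v-1}{v+e-1}.\]
To do this, take a subgraph $F'$ and let $\mu=V(F')\cap V(M)$. We may assume $F'$ contains $v^*$ and every subdivision vertex both of whose endpoints lie in $\mu$, since adding these only increases the ratio. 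Then
\[\frac{e(F')-1}{v(F')-2}=\frac{2e(M[\mu])+|\mu|-1}{|\mu|+e(M[\mu])-1},\]
which is increasing in $e(M[\mu])/(|\mu|-1)$. The condition on $M$ therefore says this is maximized at $\mu=V(M)$; small cases where $F'$ avoids $v^*$ or is a path or star are handled directly. This gives the value of the middle range, $n^{2-1/m_2}$, and its lower threshold. Note also that the upper threshold exponent satisfies
\[\frac{v-1}{v+2e-1}=\frac{1}{2\Delta'-1},\qquad \Delta'=1+\frac{e}{v-1}.\]
This is the "average" version of the parameter $\Delta$ in \Cref{thm:maxDegree}, so that theorem cannot be applied as a black box. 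Instead we must redo its proof with an averaged codegree count.

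The lower bounds are standard.
\begin{itemize}
\item For $p\ll n^{-1/m_2}$: the expected number of copies of $F_M$ is at most $n^{v(F)}p^{e(F)}$, and more precisely every subgraph count is $\ll pn^2$ by 2-balancedness. So deleting one edge from each copy leaves $(1+o(1))p\binom n2$ edges a.a.s.
\item Middle range: couple $G_{n,q}\subseteq G_{n,p}$ with $q=\ep n^{-1/m_2}$. Monotonicity of $\ex(G_{n,p},F)$ in $p$ then gives $\Omega(n^{2-1/m_2})$.
\item Top range: $F_M$ contains a $C_4=K_{2,2}$, namely $v^*,x,w_{xy},y$ for any edge $xy$ of $M$. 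Intersecting a $C_4$-free polarity graph with $G_{n,p}$ gives $\Omega(p^{1/2}n^{3/2})$ edges a.a.s.
\end{itemize}

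For the upper bounds, the range $n^{-1/m_2}\ll p\le n^{1/e(F)^2-1/m_2}$ is covered by \Cref{thm:upperSmallp}. Its hypothesis holds because, by the computation above, any extremal $\nu$ must contain all of $V(M)=S$; strictness of the balanced inequality is not needed, only that maximizers contain $S$, which I would double-check. The rest of the middle range then follows from monotonicity up to the upper threshold. For the top range I would prove a balanced supersaturation lemma and feed it into the hypergraph container method, exactly as in the proof of \Cref{thm:Kst}.

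The lemma should state the following. If $H$ is an $n$-vertex graph with $kn^{3/2}$ edges and $k$ is large, then $H$ contains a collection $\mathcal{F}$ of copies of $F_M$ such that every set $\sigma$ of edges lies in at most
\[\Delta_{|\sigma|}\approx \frac{|\mathcal{F}|}{e(H)}\,k^{-\gamma(|\sigma|-1)}\]
members of $\mathcal{F}$, with the exponents tuned so that the container count gives $p^{1/2}n^{3/2}$ once $p\ge n^{-(v-1)/(v+2e-1)}(\log n)^C$. To build $\mathcal{F}$, I would use dependent random choice or simple degree averaging. First pick $v^*$ of degree about $kn^{1/2}$ in a regularized subgraph of $H$. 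Inside $N(v^*)$, form the auxiliary multigraph $A$ in which $x$ and $y$ are joined according to their common neighbours outside the copy. Since $H$ has many $C_4$'s through $v^*$, $A$ is dense. Then embed $M$ into $A$ and choose a distinct common neighbour for each edge of $M$. Each copy is sampled uniformly from a "balanced" candidate family, so that no vertex or pair is overused.

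The main obstacle is the codegree bound. For a fixed edge set $\sigma$, the number of copies containing it is governed by which vertices of $M$ and subdivision vertices $\sigma$ pins down. Bounding this requires $e(M[\mu])/(|\mu|-1)\le e/(v-1)$ for every $\mu$, which is exactly the hypothesis on $M$. My first attempt would be to count extensions from the pinned set $\mu$ greedily, and to use the balanced hypothesis to show that each extension step gains at least the average factor $(kn^{1/2})^{\cdot}$. I expect delicate bookkeeping here in two places: the case where $\sigma$ does not include $v^*$, and keeping the auxiliary multigraph's edge multiplicities (distinct common neighbours) under control.
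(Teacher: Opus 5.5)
Your computation $m_2(F_M)=\frac{2e+v-1}{v+e-1}$ agrees with \Cref{lem:multiBalanced}, and you are right that \Cref{thm:maxDegree} only gives the max-degree threshold. The gap is in the top-range upper bound $O(p^{1/2}n^{3/2})$ for $p\ge n^{-\frac{v-1}{v+2e-1}}(\log n)^C$, which is the real content of the theorem. You posit a balanced supersaturation lemma with unspecified exponents and leave its proof as ``the main obstacle.''

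\textbf{Why the sketch does not work.} Sampling copies uniformly from a ``balanced candidate family'' presupposes the very family that has to be built. In an arbitrary host graph with $kn^{3/2}$ edges, the copies produced by dependent random choice can concentrate on a few edges, pairs or stars, and nothing in your sketch prevents this.

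\textbf{The target is also misidentified.} A DRC-type construction gives no control of $|N(v^*)\cap N(w_{xy})|$ below $qn$. So for $\nu=\{v^*,x,w_{xy}\}$, the attainable bound on extensions of a fixed copy of $F[\nu]$ is $q^{e(F)-f(\nu)}n^{v(F)-|\nu|}$, with the weighted count $f(\nu)=|S\cap\nu|+2(|T\cap\nu|-1)=3>2=e(\nu)$. It is not the random-like $q^{e(F)-e(\nu)}n^{v(F)-|\nu|}$.

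\textbf{The lemma cannot hold for all large $k$.} A bound $\tau\approx k^{-1}n^{3/2}$ (the form needed for $p^{1/2}n^{3/2}$) can only hold for $k$ up to about $n^{\al/2}$, with $\al=\frac{v-1}{v+2e-1}$. A $\tau$ far below $n^{2-1/m_2(F_M)}$ would contradict the middle-range lower bound. Proving the lemma in exactly this range is where the hypothesis on $M$ is needed.

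\textbf{How the paper fills the gap.} It takes a maximal family of embeddings that is $D$-good for $D(\nu)=\del^{-|\nu|}q^{e(F)-f(\nu)}n^{v(F)-|\nu|}$. \Cref{inductivestep} shows that while the family has fewer than $\ep q^{e(F)}n^{v(F)}$ members, one can add a copy containing no saturated partial embedding. To do this, \Cref{lem:remove} and \Cref{standcount} peel off $S$-vertices and $T$-vertices, with at most $2\del qn$, resp.\ $2\del q^{\deg_F(v)}n$, saturated extensions per step. What remains are saturated edges and saturated $T$-stars, which are avoided by:
\begin{itemize}
\item deleting saturated edges;
\item taking $v^*$ embeddable;
\item placing $S$ on a tuple in $N(v^*)$ none of whose subtuples is light or dangerous;
\item placing each subdivision vertex in a set $N'(\cdot)$ that avoids saturated $T$-stars.
\end{itemize}
This yields $\tau(n,qn^2)\le q^{-1}n$ for $q\le n^{(\al(F)-1)/2}$ (\Cref{lem:tauProperties}). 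The hypothesis on $M$ enters only through $\al(F_M)\ge\frac{v-1}{v+2e-1}$, via the identity $\frac{e(F[\mu\cup N(\mu)])-|N(\mu)|}{|\mu|-1}=1+\frac{e(M[\mu])}{|\mu|-1}$ (\Cref{thm:generalTightExamples}).

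\textbf{Two smaller steps are also wrong as written.}
\begin{itemize}
\item Intersecting an $n$-vertex polarity graph with $G_{n,p}$ yields only $\Theta(pn^{3/2})$ edges, not $\Omega(p^{1/2}n^{3/2})$. You need the known bound of \Cref{prop:KrtLower}. One construction: take a polarity graph on $\Theta(pn)$ vertices, blow it up into parts of size $\Theta(1/p)$, and keep only a matching of $G_{n,p}$-edges between adjacent parts.
\item The hypothesis of \Cref{thm:upperSmallp} fails whenever $V(M)$ is not the unique maximizer of $e(M[\mu])/(|\mu|-1)$. For $M$ a path $xyz$, the $C_4$ on $\{v^*,x,y,w_{xy}\}$ has $m_2=\frac32=m_2(F_M)$ but misses $z$.
\end{itemize}
That theorem is unnecessary anyway. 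Once the top-range bound holds at $p_0=n^{-\frac{v-1}{v+2e-1}}(\log n)^C$, monotonicity gives $\ex(G_{n,p},F_M)=O(p_0^{1/2}n^{3/2})=n^{2-1/m_2(F_M)}(\log n)^{O(1)}$ on the entire middle range, because $\frac32-\frac{v-1}{2(v+2e-1)}=2-\frac{1}{m_2(F_M)}$. This is how the paper argues.
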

For example, if $M$ is a triangle with edge multiplicities $a\ge b\ge c$, then \Cref{thm:multigraph} applies if and only if $a\le b+c$.  \Cref{thm:multigraph} is the simplest case of the following more general result giving tight bounds for certain graphs $F$ satisfying some balanced conditions. 

\begin{thm}\label{thm:generalTightExamples}
    Let $F$ be a graph which has a bipartition $S\cup T$ and a vertex $v^*\in T$ such that $v^*$ is adjacent to all of $S$ and such that every $v\in T\sm \{v^*\}$ has degree exactly $r\ge 2$.  For each $\mu\sub S$, let $N(\mu)\sub T$ denote the set of vertices of $T$ that are adjacent to at least one vertex of $\mu$.  
    
    If $F$ contains a subgraph isomorphic to some $K_{r,t}$ with $\ex(n,K_{r,t})=\Theta(n^{2-1/r})$, if $F$ is 2-balanced, and if $F$ satisfies
    \[\max_{\mu\sub S,\ |\mu|\ge 2} \frac{e(F[\mu\cup N(\mu)])-|N(\mu)|}{|\mu|-1}=\frac{e(F)-|T|}{|S|-1},\]
    then a.a.s.
    \[\ex(G_{n,p},F)=\begin{cases}
        \Theta(p^{1-1/r}n^{2-1/r}) &  n^{-\frac{|S|-1}{|S|-1+r(|T|-1)}}(\log n)^{O(1)}\le p,\\ 
        n^{2-\frac{|S|+|T|-2}{|S|-1+r(|T|-1)}}(\log n)^{\Theta(1)} & n^{-\frac{|S|+|T|-2}{|S|-1+r(|T|-1)}}\ll p\le n^{-\frac{|S|-1}{|S|-1+r(|T|-1)}}(\log n)^{O(1)},\\ 
        (1+o(1))p{n\choose 2} & n^{-2}\ll p\ll n^{-\frac{|S|+|T|-2}{|S|-1+r(|T|-1)}}.
    \end{cases}\]
\end{thm}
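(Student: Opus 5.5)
We prove \Cref{thm:generalTightExamples} by treating the three ranges of $p$ separately, getting lower bounds from standard constructions and upper bounds from \Cref{thm:maxDegree} together with a container argument for the middle range. Throughout, set $\Del=|S|$, so that every $u\in S$ has $|N_F(u)\cup\{v^*\}|\le |T|$; the exponents in the statement are written in terms of $|S|$ and $|T|$.

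\textbf{Lower bounds.} In the bottom range, the bound $(1+o(1))p\binom n2$ comes from deletion. One checks that $m_2(F)=\frac{e(F)-1}{v(F)-2}$ because $F$ is 2-balanced. Since $e(F)=|S|+r(|T|-1)$, we get
\[m_2(F)=\frac{|S|-1+r(|T|-1)}{|S|+|T|-2}.\]
So for $p\ll n^{-1/m_2(F)}$ the expected number of copies of $F$ is $o(pn^2)$, and we delete one edge from each copy. In the middle range we pick $q=n^{-1/m_2(F)}$ and take a random subgraph of $G_{n,p}$ of density $q/p$, which gives the flat bound $n^{2-1/m_2(F)}$, up to logs. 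In the top range, $F\supseteq K_{r,t}$, so the lower bound $\Omega(p^{1-1/r}n^{2-1/r})$ is inherited from \Cref{thm:Kst}, or from the second part of \Cref{thm:maxDegree}.

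\textbf{Upper bound, top range.} Every vertex of $T\sm v^*$ has degree exactly $r$, and $F\supseteq K_{r,t}$. So \Cref{thm:maxDegree} gives $\Theta(p^{1-1/r}n^{2-1/r})$, but only for $p\ge n^{-\frac{r-1}{r\Del-1}}(\log n)^C$. This threshold does not match $n^{-\frac{|S|-1}{|S|-1+r(|T|-1)}}$. The obvious choice of $\Del$ in that theorem is $\max_u|N(u)\cup v^*|$, which does not see the global structure. So the container argument underlying \Cref{thm:maxDegree} has to be redone with a sharper balanced supersaturation lemma, which is the heart of the proof.

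\textbf{Balanced supersaturation.} The lemma to prove is the following. If $G$ has $n$ vertices and $m=kn^{2-1/r}$ edges with $k$ large, then $G$ contains a collection $\mathcal{F}$ of copies of $F$ such that, for every set $\sigma$ of edges spanning a subgraph $F'\subseteq F$, the number of copies in $\mathcal{F}$ containing $\sigma$ is at most roughly $|\mathcal{F}|/m^{\,\cdot}$, scaled according to $F'$.

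To build $\mathcal{F}$, first use dependent random choice around a high-degree vertex $x$ playing the role of $v^*$. Inside $N(x)$, find many $r$-sets with large common neighbourhood. Then embed $S$ into $N(x)$ and each vertex of $T\sm v^*$ into a common neighbourhood of its $r$ neighbours in $S$, using the codegree thresholds for $K_{r,t}$-type counting. The cleanest scheme is to count \emph{vertex-balanced} copies: we bound how many copies contain a prescribed image of a subset $\mu\sub S$ together with $v^*$. Fixing $\mu$ forces the images of $N(\mu)$ to lie in codegree neighbourhoods. This gives an extra gain of roughly $(\text{degree})^{|\mu|-1}$ per fixed vertex, weighed against the $e(F[\mu\cup N(\mu)])-|N(\mu)|$ edges absorbed. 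The hypothesis
\[\max_{\mu\sub S,\ |\mu|\ge 2} \frac{e(F[\mu\cup N(\mu)])-|N(\mu)|}{|\mu|-1}=\frac{e(F)-|T|}{|S|-1}\]
is exactly what makes the worst case $\mu=S$. Edges in $\sigma$ that involve vertices of $T$ not in $N(\mu)$ are handled by the 2-balancedness of $F$.

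\textbf{Containers and the union bound.} Feeding this codegree bound into the hypergraph container lemma and taking the usual union bound over containers yields the upper bound $O(p^{1-1/r}n^{2-1/r})$ down to $p=n^{-\frac{|S|-1}{|S|-1+r(|T|-1)}}(\log n)^{O(1)}$. At that value of $p$ the bound equals $n^{2-1/m_2(F)}$, up to logs. Monotonicity in $p$ then gives the flat middle-range upper bound, by coupling $G_{n,p}\subseteq G_{n,p'}$ with $p'$ at the threshold.

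\textbf{Main obstacle.} The hard step is the balanced supersaturation: making dependent random choice produce copies that are evenly spread over every pair $(\mu,\ \text{edge set})$ at once. The danger is that a few high-codegree $r$-sets, or a single very high-degree vertex used as $v^*$, dominate the count. I would first regularize $G$, passing to a subgraph with near-regular degrees and bounded $r$-codegrees, either by a dyadic pigeonhole or by an iterative deletion as in Morris--Saxton, before running dependent random choice.
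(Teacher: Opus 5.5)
Your lower bounds in all three ranges, and your reduction of the middle-range upper bound to the top-range upper bound by monotonicity at the threshold, agree with the paper. The paper gets the lower bounds from \Cref{prop:nieSpiroEasyRanges} and \Cref{prop:KrtLower}. Your overall architecture (balanced supersaturation, then containers, then monotonicity) is also the right one. The gap is the top-range upper bound $O(p^{1-1/r}n^{2-1/r})$ for $p\ge n^{-\frac{|S|-1}{|S|-1+r(|T|-1)}}(\log n)^{C}$, which is the only non-routine part of the theorem. You defer it to a ``sharper balanced supersaturation lemma'' whose codegree bounds are never pinned down (``scaled according to $F'$'') and never proved. The decisive claim, that the $\mu$-hypothesis makes $\mu=S$ the worst case so that containers give exactly this threshold, is asserted without any computation. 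Your suggested route to the lemma also does not work as stated: in general you cannot pass to a subgraph whose $r$-codegrees are of order $q^rn$. For example, in $K_{qn,n}$, any subgraph keeping a constant fraction of the edges has $r$-sets on the small side with average codegree $\Omega(n)$. The paper instead builds a $D$-good family greedily, avoiding saturated partial embeddings, and draws each vertex of $T\sm\{v^*\}$ from a capped subset $N'(\mathbf{x})$ of the common neighbourhood.

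The missing idea is that no new supersaturation is needed. \Cref{thm:alpha}, which rests on the general vertex-balanced bound $D(\nu)=\del^{-|\nu|}q^{e(F)-f(\nu)}n^{v(F)-|\nu|}$ of \Cref{findingacollection}, already gives the top-range bound for $p\ge n^{-\al(F)}(\log n)^C$. So the proof reduces to the purely combinatorial inequality $\al(F)\ge \frac{|S|-1}{|S|-1+r(|T|-1)}$. What your sketch misses is that a fixed partial copy on $\nu$ is charged $f(\nu)=|S\cap\nu|+r(|T\cap\nu|-1)$, not $e(\nu)$. Every fixed $T$-vertex except one is charged its full degree $r$, because it is drawn from the common neighbourhood of all its $S$-neighbours. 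By \Cref{lem:simple} this gives $\al(\nu)=\frac{(r-1)(|S\cap\nu|-1)}{r(e(\nu)-|T\cap\nu|)+1-|S\cap\nu|}$. Now take a minimizer $\nu\in A_F$ with $|\nu|$ maximal, set $\mu=S\cap\nu$, and show three things:
\begin{itemize}
\item $|\mu|\ge 2$, since a star has $f(\nu)-1=r(e(\nu)-1)$ and so is not in $A_F$;
\item $T\cap\nu\sub N(\mu)$, since $F[\nu]$ has no isolated vertices;
\item $N(\mu)\sub\nu$, since adding a missing $v\in N(\mu)$ keeps the set in $A_F$, raises $e$ by at least one and $|T\cap\nu|$ by exactly one, and hence does not increase $\al$.
\end{itemize}
Only then does the hypothesis apply to $\mu\cup N(\mu)$, and together with $e(F)=|S|+r(|T|-1)$ it gives the required bound. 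Note that 2-balancedness plays no role in this upper bound; it is used only to identify $m_2(F)$ for the lower ranges. So your remark that edges at $T$-vertices outside $N(\mu)$ are ``handled by 2-balancedness'' is misplaced: such a vertex would be isolated in $F[\nu]$ and never occurs in a relevant $\nu$.
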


For example, consider the graph $F_{r,s,t}$ defined by starting with an arbitrary set of $s$ vertices $S$ and a vertex $v^*$ adjacent to all of $S$, and then for each $R\sub S$ of size $r$, we introduce new vertices $v_{1,R},\ldots,v_{t,R}$ which are adjacent to all of $R$ and no other vertices of $S$. One can check that graphs of this form satisfy the conditions of \Cref{thm:generalTightExamples}, giving yet another set of new tight examples

We once again emphasize that prior to these results, the only bipartite graphs for which non-trivial tight bounds for the random Tur\'an problem  were known was limited only to certain complete bipartite graphs, cycles, and theta graphs \cite{mckinley2023random,morris2016number}.  As such, our results Theorems \ref{thm:multigraph} and \ref{thm:generalTightExamples} give a much richer set of known tight examples.

\subsection{The Most General Theorems}\label{sub:technical}

We now state our most general theorems which will imply all of the upper bounds of the previous subsections.  Our general theorems will apply to the following types of graphs.
\begin{defn}
    We say that a bipartite graph $F$ is \textit{$r$-semi-bounded} with respect to a triple $(S,T,v^*)$ if $S\cup T$ is a bipartition of $F$, if $v^*\in T$ is a vertex which is adjacent to every vertex in $S$, and if every $v\in T\sm \{v^*\}$ has $\deg_F(v)\le r$.  We say that $F$ is $r$-semi-bounded if it is $r$-semi-bounded with respect to some triple.
\end{defn}
Here the ``semi'' part of the name ``$r$-semi-bounded'' is meant to refer both to the boundedness condition only applying to one of the parts of $F$, as well as to the fact that only $|T|-1$ of the vertices of $T$ have bounded degree.  We define a few parameters associated with $r$-semi-bounded graphs to state our main results.

\begin{defn}
    Given a bipartite graph $F$ which is $r$-semi-bounded with respect to a triple $(S,T,v^*)$, we define for each $\nu\sub V(F)$ the quantity
    \[e(\nu):=e(F[\nu]),\]
		and if $e(\nu)\ge 1$ we define 
		\[f(\nu):=|S\cap \nu|+\sum_{v\in T\cap \nu} \deg_F(v)-\max_{v\in T\cap \nu: \deg_{F[\nu]}(v)\ge 1} \deg_F(v).\]
\end{defn}

Note that having $e(\nu)\ge 1$ implies the maximum in the definition of $f(\nu)$ is over a non-empty set, meaning that $f(\nu)$ is well-defined in this case.  The strangely defined parameter $f(\nu)$ can be thought of as a weighted version of the more natural edge count $e(\nu)$.  In particular, we will formally show in \Cref{lem:fvse} that we always have $f(\nu)\ge e(\nu)$ with equality holding in some important cases.

The range for which we can bound $\ex(G_{n,p},F)$ for large $p$ will be controlled by the following parameter, where here $\del(F')$ denotes the minimum degree of the graph $F'$.	 
\begin{defn}
    Given an $r$-semi-bounded-graph $F$, let 
\[A_F=\{\nu: \del(F[\nu])\ge 1,\ f(\nu)-1< r(e(\nu)-1)\},\]
    and for $\nu\in A_F$ we define
    \[\al(\nu)=\frac{r(|\nu|-2)+1-f(\nu)}{r(e(\nu)-1)+1-f(\nu)}.\]and
    \[\al(F)=\min_{\nu\in A_F} \al(\nu).\]
\end{defn}
Technically the definition of $\al(F)$ depends not only on $F$, but also the choice of $r$ and its implicit triple $(S,T,v^*)$, but we will suppress these dependencies from $\al$ for ease of notation. 
 Observe that having $\nu\in A_F$ means that $\al(\nu)$ is well-defined (i.e.\ its denominator is not equal to $0$).  Later in \Cref{lem:alBound} we show that $A_F\ne \emptyset$ whenever $F$ contains a cycle, implying that $\al(F)$ is well-defined in this case, which will be the only case where we consider $\al(F)$.

The exact formulation of $\al(F)$ is rather opaque, but at a high-level it can be thought of as a variant of the 2-density of $F$.  More precisely, if $\nu$ is such that $f(\nu)=e(\nu)$, then we have $\al(\nu)=\frac{r(|\nu|-2)}{(r-1)(e(\nu)-1)}-\frac{1}{r-1}$, and hence $\al(\nu)$ is equal to a linear transformation of the usual (inverse) 2-density formula $\frac{|\nu|-2}{e(\nu)-1}$ in this case.

With these definitions in mind, we can state our first technical result giving effective upper bounds on $\ex(G_{n,p},F)$ whenever $p$ is sufficiently large relative to $\al(F)$.
\begin{thm}\label{thm:alpha}
    If $F$ is $r$-semi-bounded and contains a cycle, then there exists a constant $C>0$ such that for all $n^{-\al(F)}(\log n)^C\le p\le 1$ we have a.a.s.
    \[\ex(G_{n,p},F)=O(p^{1-1/r}n^{2-1/r}).\]
\end{thm}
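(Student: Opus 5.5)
We will prove \Cref{thm:alpha} with the hypergraph container method, following the framework of Morris and Saxton~\cite{morris2016number} and Jiang and Longbrake~\cite{jiang2022balanced}. The key ingredient is a \emph{balanced supersaturation} lemma. Set $k=p^{-1/r}$ up to constants, so that the target bound reads $pk n^{2-1/r}$ with $k$ playing the role of the ``excess density''. The lemma will say: if $G$ is an $n$-vertex graph with $e(G)=k n^{2-1/r}$ for $k$ larger than a suitable constant and at most $n^{\al(F)/\cdot}$-type thresholds, then there is a collection $\mathcal{F}$ of copies of $F$ in $G$ satisfying two conditions. First, $|\mathcal{F}|$ is at least about $k^{e(F)\cdot(\text{something})}n^{\ldots}$, the count a ``random-like'' graph of that density would give. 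Second, for every $\nu$ and every subgraph $H\cong F[\nu]$ of $G$ with $e(H)\ge 1$, the number of copies in $\mathcal{F}$ containing $H$ is at most $|\mathcal{F}|\cdot \tau^{\,e(\nu)-1}/e(G)$ for an appropriate $\tau$.

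Once this lemma is in hand, the standard container machinery applies: iterate the hypergraph container lemma on the $e(F)$-uniform hypergraph of copies, obtaining at most $\exp(O(\tau n^{2}\log n))$-sized families of containers, each with $O(k n^{2-1/r})$ edges. A union bound over containers then shows that a.a.s.\ every $F$-free subgraph of $G_{n,p}$ has $O(p\,k n^{2-1/r}+\tau n^2\log n)$ edges. The range $p\ge n^{-\al(F)}(\log n)^C$ is exactly where the codegree parameter $\tau$ can be taken small enough that the second term is dominated by $p^{1-1/r}n^{2-1/r}$. We will verify this by computing, for each $\nu\in A_F$, the constraint $\tau^{e(\nu)-1}\gtrsim \Delta_\nu$ and solving for $p$. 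The ratio $\al(\nu)$ arises precisely from equating these exponents, and sets with $f(\nu)-1\ge r(e(\nu)-1)$ give no constraint, which explains why only $A_F$ enters. \Cref{lem:alBound} guarantees that $A_F\neq\emptyset$.

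To build $\mathcal{F}$ we use dependent random choice, in the style of Alon--Krivelevich--Sudakov, combined with vertex-balanced supersaturation. First pass to a subgraph that is roughly regular, with degrees about $kn^{1-1/r}$. For each vertex $x$ playing the role of $v^*$, apply dependent random choice inside $N(x)$ to find many $r$-sets with many common neighbours, so that each such $r$-set has common neighbourhood of size about $k^r$. Then embed $S$ into $N(x)$ and each $v\in T\setminus\{v^*\}$ greedily into common neighbourhoods of its at most $r$ neighbours. Each vertex of $T\setminus v^*$ contributes a factor about $k^{\deg(v)}$ choices, while $v^*$ contributes only its own degree. This asymmetry is why the weighted count $f(\nu)$, which subtracts the maximum $T$-degree in $\nu$, governs the codegrees: a copy of $F[\nu]$ is extended with the largest-degree $T$-vertex potentially acting as the uncontrolled one. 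The parameter $f(\nu)$ also enters through $|S\cap\nu|$ and the degree sums, and \Cref{lem:fvse} ($f\ge e$) will be used to compare these bounds with the required ones.

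The main obstacle is this codegree control. Dependent random choice naturally produces many copies, but not \emph{balanced} ones: some $r$-sets or vertices may lie in far too many copies. To fix this, we will restrict to ``good'' $r$-sets whose common neighbourhoods have size at most $Ck^r$ (discarding popular ones, as in the vertex-balanced supersaturation of~\cite{jiang2022balanced}), and choose copies uniformly from a regularized structure. We then bound, for each $\nu$, the number of extensions of a fixed $F[\nu]$ by $n^{|V(F)\setminus\nu|}$ times the appropriate powers of $k$ and $n^{-1/r}$. Carrying out this case analysis over all $\nu$, and in particular handling which vertex of $T\cap\nu$ is mapped with uncontrolled degree, is where the bulk of the work lies.
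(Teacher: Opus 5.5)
Your architecture matches the paper's: a balanced supersaturation lemma with codegrees governed by $f(\nu)$, then containers, with $\al(F)$ coming from comparing exponents over $\nu\in A_F$. However, the container step as you describe it does not give the stated bound. A plain union bound over containers gives $O(pqn^2+\tau\log n)$, where $q=e(G)/n^2$ and $\tau$ is the fingerprint size, and the $\log n$ cannot be absorbed. Suppose some $v\in T\sm\{v^*\}$ has degree $r$ (as in every tight case). At your choice $q\asymp p^{-1/r}n^{-1/r}$, the set $\nu=\{u,v,v^*\}$ with $u\in N_F(v)$ is a path $v$--$u$--$v^*$ with $e(\nu)=2$ and $f(\nu)=r+1$. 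It alone contributes $qn^2\cdot q^{-r}n^{-1}=q^{1-r}n=p^{1-1/r}n^{2-1/r}$ to $\tau(n,qn^2)$. So the fingerprints are of the same order as the target, and your second term is of order $p^{1-1/r}n^{2-1/r}\log n$. Re-optimizing $q$ only reduces the loss to $(\log n)^{1/r}$, which is still unbounded for, say, $p=n^{-\al(F)/2}$.

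The hypothesis $p\ge n^{-\al(F)}(\log n)^C$ is not what makes $\tau\log n$ small. It is only used to absorb the flat term $n^{2-1/r-(r-1)\al(F)/r}(\log n)^{O(1)}$, which comes from capping $\tau$ at $q=n^{\al(F)/r-1/r}$ (\Cref{lem:tauProperties}(b)).

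The missing ingredient is the fingerprint-based estimate \Cref{Lemma:General Random Turan for Large p}, proved in the appendix in the style of Morris--Saxton. One tracks the whole sequence of fingerprints and uses that it must itself lie in $G_{n,p}$, which gives a factor $p^t$ for total size $t$. The Neto--Morris lemma bounds the number of such sequences by $(C'n^{2-1/r}/t)^{\frac{r}{r-1}t}\exp\bigl(O(q^{1-r}n+n^{2-1/r-(r-1)\al(F)/r}\log^2 n)\bigr)$. After multiplying by $p^t$, the first factor becomes $(C'p^{1-1/r}n^{2-1/r}/t)^{\frac{r}{r-1}t}$, so a logarithm is paid only on the flat term. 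Applying this with $\tau'=\max\{n^{2-1/r-(r-1)\al(F)/r},q^{1-r}n\}$, $\beta=r$ and $q=C'p^{-1/r}n^{-1/r}$ gives exactly the theorem.

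Your supersaturation sketch also has steps that fail as stated. Take $G=K_{A,B}$ with $|A|=qn$ and $q=o(1)$, which is the relevant range since $\al(F)\le 1$. Every subgraph with $\Omega(qn^2)$ edges has vertices of degree $\Omega(n)\gg qn$, so you cannot pass to a two-sided roughly regular subgraph with degrees $\asymp kn^{1-1/r}$. Every $r$-set on either side has common neighbourhood of size $\gg q^r n\asymp k^r$, so discarding popular $r$-sets leaves nothing, even though a balanced family must exist here.

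The paper handles this differently:
\begin{itemize}
\item It regularizes only the side $Y$ that receives $T$.
\item It never discards heavy tuples. Instead it truncates their common neighbourhoods to sets $N'(\mathbf{x})$ of size at most $\rho^a m$ that avoid saturated $T$-stars.
\item It argues greedily: a maximal $D$-good family, with $D(\nu)=\del^{-|\nu|}q^{e(F)-f(\nu)}n^{v(F)-|\nu|}$, must be large. Otherwise one embeds $v^*$ at an embeddable vertex (few light or dangerous tuples in its neighbourhood), $S$ into its neighbourhood, and $T\sm\{v^*\}$ into the truncated common neighbourhoods. \Cref{standcount} then shows that most such copies complete no saturated partial embedding.
\end{itemize}
This dynamic avoidance of saturated structures is what makes the $f(\nu)$-type codegree bounds provable. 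Choosing copies uniformly from a regularized structure does not supply it.
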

We will show later in \Cref{prop:alBound} that $\al(F)>0$ whenever $F$ contains a cycle, meaning that \Cref{thm:alpha} always gives an effective upper bound for some non-empty interval of $p$ for every $r$-semi-bounded graph $F$.  We can also establish bounds at small values of $p$, for which we will need another parameter.

\begin{defn}
    Given an $r$-semi-bounded graph $F$, let 
    \[B_F=\{\nu: \del(F[\nu])\ge 1,\ f(\nu)>e(\nu)\}\]
    and for $\nu\in B_F$ define
    \[\be(\nu)=\frac{|\nu|-2-(e(\nu)-1)/m_2(F)}{f(\nu)-e(\nu)}\]
    and $\be(F)=\min_{\nu\in B_F} \be(\nu)$.
\end{defn}
Again, we will show in \Cref{lem:alBound} that $B_F\ne \emptyset$ whenever $F$ contains a cycle, implying that $\be(F)$ is well-defined in this case.
We will ultimately be able to conclude the following bounds in terms of this parameter.
\begin{thm}\label{thm:beta}
    If $F$ is $r$-semi-bounded and contains a cycle, then there exists some $C>0$ such that for all $n^{-1/m_2(F)}\le p\le \min\{n^{\be(F)-1/m_2(F)},n^{1/r-1/m_2(F)}\}$ we have a.a.s.
    \[\ex(G_{n,p},F)\le n^{2-1/m_2(F)}(\log n)^C.\]
\end{thm}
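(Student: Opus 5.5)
The proof runs through the hypergraph container method, driven by a balanced supersaturation lemma. Fix an $n$-vertex host graph $G$ with $N=|E(G)|$ edges. The plan is to show that if $N\ge k\,n^{2-1/m_2(F)}$ for a suitably large parameter $k$ (depending on $n$ and $p$ as chosen below), then $G$ contains a collection $\mathcal{H}$ of copies of $F$ with two properties. First, $|\mathcal{H}|$ is at least roughly $N^{e(F)}n^{-2e(F)+v(F)}$, i.e.\ about the Sidorenko count. Second, the copies are \emph{balanced}: for every $\nu\subseteq V(F)$ with $\delta(F[\nu])\ge 1$ and every set $\sigma$ of $e(\nu)$ edges of $G$, the number of copies in $\mathcal{H}$ whose restriction to $\nu$ is $\sigma$ is at most about $|\mathcal{H}|/(N^{e(\nu)}n^{v(F)-|\nu|}\cdots)$, up to a correction governed by $f(\nu)$. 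Feeding these codegree bounds into the container lemma (as in Morris--Saxton, or the version in Jiang--Longbrake) gives a family of $\exp\bigl(n^{2-1/m_2(F)}(\log n)^{O(1)}\bigr)$ containers, each with $O(n^{2-1/m_2(F)}(\log n)^{O(1)})$ edges after iteration. A union bound over containers, using that $G_{n,p}$ meets a fixed set of $m$ edges in more than $2pm+t$ edges with tiny probability, then yields $\ex(G_{n,p},F)\le n^{2-1/m_2(F)}(\log n)^C$ whenever $p\ge n^{-1/m_2(F)}$.

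The supersaturation step uses dependent random choice with the vertex $v^*$ as the anchor. We pick $v^*$'s image $x$ and work inside $N_G(x)$. Each vertex of $T\setminus\{v^*\}$ must be embedded as a common neighbour of at most $r$ already-placed vertices of $S\subseteq N_G(x)$. The standard Alon--Krivelevich--Sudakov argument, run randomly, produces many embeddings. To get balancedness we use the ``balanced vertex supersaturation'' trick: restrict to a subgraph where degrees and $r$-wise codegrees are controlled, choose images of $S$ uniformly from a large set $U\subseteq N(x)$ in which all $r$-sets have many common neighbours, and choose $T$-images uniformly among those common neighbours.

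Counting the copies that extend a fixed $\sigma$ on $\nu$ then gives the following. Each vertex of $S\setminus\nu$ contributes a factor of roughly $|U|$. Each $T$-vertex outside $\nu$ contributes a codegree factor of roughly $N^{\deg}/n^{2\deg-1}$-type. Fixing $\nu$ saves factors. The one exception is the single $T$-vertex of maximal degree in $\nu$, which can play the role of $v^*$ and costs nothing, because $x$ ranges over only $n$ choices. This is exactly why $f(\nu)$ subtracts $\max\deg_F(v)$, and $f(\nu)-e(\nu)$ measures the excess relative to the ideal codegree bound.

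The next step is to verify the container codegree conditions. After this computation they reduce to requiring, for each $\nu\in B_F$, that the ratio between the actual and ideal codegree, roughly $(N/n^{2-1/m_2(F)}\cdot\ldots)^{f(\nu)-e(\nu)}$ against $n^{|\nu|-2-(e(\nu)-1)/m_2(F)}$, is at most polylogarithmic. Since we only need containers down to size $n^{2-1/m_2(F)}$, and the relevant density is $N/n^2$ with $N\ge p^{-1}\cdot(\text{target})$, this is precisely the condition $p\le n^{\be(F)-1/m_2(F)}$. For $\nu\notin B_F$ we have $f(\nu)=e(\nu)$ (by \Cref{lem:fvse}, $f\ge e$), so the needed bound follows from the definition of $m_2(F)$. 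The constraint $p\le n^{1/r-1/m_2(F)}$ ensures that the dependent random choice step applies. That step needs average degree $d\gtrsim n^{1-1/r}$ relative to $p$-scaled density, so that the set $U$ in which all $r$-sets have large common neighbourhood exists at the densities we iterate through.

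The main obstacle will be making dependent random choice produce a uniformly balanced family, rather than merely many copies. A random $r$-set in $U$ can still have very large common neighbourhoods for a few exceptional sets, which would break the codegree bounds. My first attempt at this step is to prune first: iteratively delete $r$-sets (and edges) whose codegree exceeds a threshold, using a dyadic pigeonhole over degree scales to lose only $\log n$ factors. I would then weight embeddings so that each $T$-vertex is chosen with probability proportional to the inverse of its common-neighbourhood size, which equalizes contributions.
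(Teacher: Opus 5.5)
Your outer reduction matches the paper's. Containers are applied at density $q=C'p^{-1}n^{-1/m_2(F)}$, and the only potentially bad codegree terms come from $\nu\in B_F$. Then $p\le n^{\be(F)-1/m_2(F)}$ is exactly $q\ge C'n^{-\be(F)}$, which gives $\tau(n,qn^2)\le n^{2-1/m_2(F)}$ (\Cref{lem:tauProperties}(c)), and $p\le n^{1/r-1/m_2(F)}$ is exactly $qn^2\ge C'n^{2-1/r}$. (One slip: the containers have about $p^{-1}n^{2-1/m_2(F)}$ edges, not $n^{2-1/m_2(F)}$; only their intersection with $G_{n,p}$ is that small.)

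The gap is that the balanced supersaturation statement, which is where all the work lies, is never proved. You need, for $G$ with $qn^2\ge Cn^{2-1/r}$ edges, a family of $\Omega(q^{e(F)}n^{v(F)})$ embeddings such that every partial embedding $\psi:F[\nu]\to G$ has at most $D(\nu)=\del^{-|\nu|}q^{e(F)-f(\nu)}n^{v(F)-|\nu|}$ extensions in the family (\Cref{findingacollection}). You flag this as the main obstacle and offer only a tentative fix, which does not work as stated:
\begin{itemize}
\item Pruning sets whose codegree exceeds the ideal value $\approx q^a n$ can delete everything. In a disjoint union of copies of $K_{qn,qn}$, every $a$-set with $a\ge 2$ inside a block has codegree $qn\gg q^a n$ (for $q=o(1)$), and since $r\ge 2$ every useful set is removed.
\item Pigeonholing to a different codegree scale gives counts at that scale. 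You give no argument turning these into the bounds $D(\nu)$ at density $q$, simultaneously for all $\nu$.
\item Inverse-weighting the final $T$-choice only normalizes that last step. The number of extensions of a general $\psi$ also depends on how many anchors $x$ and $S$-images are compatible with $\psi$. That is an upper-tail quantity your plan does not control.
\end{itemize}

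The missing idea is that one should not build a balanced family directly or regularize the codegrees of $G$ at all. The paper takes a maximal $D$-good family $\Phi$ and shows (\Cref{inductivestep}) that if $|\Phi|<\ve q^{e(F)}n^{v(F)}$ then one more embedding can be added. Because saturation is measured relative to $\Phi$, double counting shows that saturated partial embeddings are rare (\Cref{standcount}).

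A dependent random choice construction is then run inside a cleaned bipartite subgraph $H$, with:
\begin{itemize}
\item no saturated edges and every $Y$-vertex of degree exactly $\rho m$;
\item an embeddable anchor $y^*$;
\item $S$-tuples with no light or dangerous sub-tuples of size at most $r$;
\item $T$-images taken from truncated sets $N'(\mathbf{x})$ of size at most $\rho^a m$ that avoid saturated $T$-stars.
\end{itemize}
This produces $\Omega(\rho^{e(F)}m^{v(F)})$ embeddings, of which only an $O(\del)$-fraction extend a saturated $\psi$.

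Such an embedding is charged by embedding one vertex of $\nu$ last. An $S$-vertex has at most $2\del qn$ saturated options, and a suitable vertex $v\in T\sm\{v^*\}$ has at most $2\del q^{\deg_F(v)}n$ (\Cref{lem:remove}, \Cref{standcount}). The only exceptions are when $F[\nu]$ is an edge or a star centred in $T\sm\{v^*\}$, and the construction rules these out directly. This charging is exactly what the definition of $f(\nu)$ is built for, and an argument of this kind is needed before your container step can be run.
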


We are not aware of any graph $F$ as in \Cref{thm:beta} which has $\be(F)>1/r$, meaning it might be possible to drop the $n^{1/r-1/m_2(F)}$ term from the minimum, but we can not rule out this possibility.

While we noted how \Cref{thm:alpha} always gives a non-trivial bound for $r$-semi-bounded graphs $F$, this is not be the case for \Cref{thm:beta}. Indeed, one can check that $\be(F)=0$ precisely when $F$ fails to satisfy the balanced condition in \Cref{thm:upperSmallp}.
A simple example of such a graph with $\be(F)=0$ can be seen by taking $F$ to be a $K_{3,3}$ together with one additional vertex made adjacent to some $v^*,v'$ which lie on the same side of the bipartition, as in this case the set of six vertices $\nu$ which make up the $K_{3,3}$ satisfies $\nu\in B_F$ and $\be(\nu)=0$.



 \textbf{Organization}.  In \Cref{sec:preliminaries} we collect some auxiliary facts about our technical definitions such as $f(\nu)$ and $a(F)$.  In \Cref{sec:supersaturation} we prove our main balanced supersaturation results, and in \Cref{sec:randomTuranResults} we use this balanced supersaturation to prove our main results. We discuss some concluding remarks in \Cref{sec:concluding}.

\section{Preliminaries}\label{sec:preliminaries}
In this section we record all of the facts regarding the technical definitions made in \Cref{sub:technical} we need throughout the paper.  The reader may wish to skip or skim this section on a first read in order to get to the real heart of the paper.

We begin with the only lemma we need to prove our main balanced supersaturation result in \Cref{sec:supersaturation}.  In fact, to a large extent the main reason why we define 
\[f(\nu)=|S\cap \nu|+\sum_{v\in T\cap \nu} \deg_F(v)-\max_{v\in T\cap \nu: \deg_{F[\nu]}(v) \geq 1} \deg_F(v)\]
is so that the following lemma holds as stated.

\begin{lem}\label{lem:remove}
    Let $F$ be an $r$-semi-bounded graph with respect to some triple $(S,T,v^*)$ and let $\nu \sub V(F)$ be such that $e(\nu)\ge 1$.
    \begin{itemize}
        \item[(a)] If $F[\nu]$ is an edge, then $f(\nu)=1$.
        \item[(b)] If $u\in S\cap \nu$ satisfies $e(\nu\sm \{u\})\ge 1$, then
        \[f(\nu)-f(\nu\sm \{u\})=1.\]
        \item[(c)] If there exists a vertex in $T \cap \nu\sm \{v^*\}$ which is not incident to every edge of $F[\nu]$, then there exists some vertex $v\in T \cap \nu \sm \{v^*\}$ satisfying
        \[f(\nu)-f(\nu\sm \{v\})=\deg_F(v).\]
    \end{itemize}
\end{lem}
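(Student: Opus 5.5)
All three parts come from unwinding the definition of $f$. Throughout, write $M(\nu)=\max\{\deg_F(v): v\in T\cap\nu,\ \deg_{F[\nu]}(v)\ge 1\}$, so that
\[f(\nu)=|S\cap\nu|+\sum_{v\in T\cap\nu}\deg_F(v)-M(\nu).\]

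\textbf{Part (a).} Suppose $F[\nu]$ is a single edge $uv$ with $u\in S$ and $v\in T$. Then $T\cap\nu=\{v\}$ and $S\cap\nu=\{u\}$. The only vertex of $T\cap\nu$ with positive degree in $F[\nu]$ is $v$, so $M(\nu)=\deg_F(v)$. Hence $f(\nu)=1+\deg_F(v)-\deg_F(v)=1$.

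\textbf{Part (b).} Take $u\in S\cap\nu$ with $e(\nu\sm\{u\})\ge 1$, so both $f(\nu)$ and $f(\nu\sm\{u\})$ are defined. Removing $u$ lowers $|S\cap\nu|$ by one and leaves the sum over $T\cap\nu$ unchanged. It remains to show $M(\nu\sm\{u\})=M(\nu)$.

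Since $v^*$ is adjacent to all of $S$, we have $\deg_F(v^*)=|S|\ge\deg_F(v)$ for every $v\in T$. If $v^*\in\nu$, then $v^*$ has positive degree in $F[\nu]$, because $u\in S\cap\nu$ is adjacent to it. It also has positive degree in $F[\nu\sm\{u\}]$: the remaining edge lies in $F[\nu\sm\{u\}]$, so its $S$-endpoint is in $\nu\sm\{u\}$ and is adjacent to $v^*$. Thus both maxima equal $|S|$.

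If $v^*\notin\nu$, a short comparison is needed. Every $T$-vertex that is active (has positive degree) in $\nu\sm\{u\}$ is also active in $\nu$, so the inequality $M(\nu\sm\{u\})\le M(\nu)$ is automatic. The converse fails in general, because some $T$-vertex could be active in $\nu$ only through $u$.

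I expect this step to be the main obstacle. My first check is whether the definition intends the degree condition to refer to $v^*$ being present. Then I would verify the claim in the cases where the lemma is actually applied, namely when $v^*\in\nu$ or when $\nu$ contains no isolated vertices after the removal. If the general case is genuinely needed, I would restate the maximum over all of $T\cap\nu$ that remains active.

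\textbf{Part (c).} Assume some $w\in T\cap\nu\sm\{v^*\}$ misses an edge of $F[\nu]$. First, let $v\in T\cap\nu\sm\{v^*\}$ be a vertex that does not attain $M(\nu)$, or that attains it without being the unique maximizer. Choose $v$ so that $F[\nu\sm\{v\}]$ still has an edge; the hypothesis guarantees this for $v=w$.

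Deleting $v$ reduces the sum over $T\cap\nu$ by exactly $\deg_F(v)$ and leaves $|S\cap\nu|$ unchanged. The maximum is also unchanged, because another active vertex attains it. Therefore $f(\nu)-f(\nu\sm\{v\})=\deg_F(v)$.

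For the remaining situation, suppose $w$ is the unique maximizer. Then $v^*$ is either absent from $\nu$ or inactive in $F[\nu]$. In that case I instead remove a non-maximal active vertex $v$; one exists because some edge of $F[\nu]$ avoids $w$, and that edge has a $T$-endpoint different from $w$. The edge $F[\nu]$ containing $w$ survives this removal, so $e(\nu\sm\{v\})\ge 1$.

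If every active $T$-vertex of $F[\nu]$ other than $w$ is inactive, then all edges pass through $w$, contradicting the hypothesis. Inactive $T$-vertices can always be removed freely, since deleting one changes neither the edge set nor the maximum.
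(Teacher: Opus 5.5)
Your parts (a) and (c) are correct. Part (c) is essentially the paper's argument: delete a vertex of $T\cap\nu\setminus\{v^*\}$ whose removal leaves the maximum term unchanged. The paper takes an isolated such vertex if one exists, and otherwise one of minimum $\deg_F$. You take $w$ unless it is the unique maximizer, and otherwise the $T$-endpoint of an edge avoiding $w$. Both choices work because deleting a vertex of $T$ never changes the $F[\nu]$-degree of another vertex of $T$.

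The gap is (b) when $v^*\notin\nu$, which you leave open. It cannot be closed: (b) is false there, for exactly the reason you suspected. Let $S=\{u_1,u_2,u_3\}$ and $T=\{v^*,w,x\}$, with $v^*$ adjacent to all of $S$, $N_F(w)=\{u_1,u_3\}$ and $N_F(x)=\{u_2\}$. This $F$ is $2$-semi-bounded and contains a $C_4$. For $\nu=\{u_1,u_2,w,x\}$ we have $E(F[\nu])=\{u_1w,u_2x\}$. Removing $u_1$ isolates $w$, so
\[f(\nu)=2+(2+1)-2=3 \quad\text{and}\quad f(\nu\setminus\{u_1\})=1+(2+1)-1=3,\]
even though $e(\nu\setminus\{u_1\})=1$. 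The paper's own proof of (b) just writes $f(\nu)-f(\nu\setminus\{u\})=|S\cap\nu|-|S\cap\nu\setminus\{u\}|$, silently assuming the invariance of the maximum that you flagged.

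The right outcome is therefore this counterexample plus a corrected statement. Statement (b) holds if $v^*\in\nu$, which is your argument. It also holds if every $T$-neighbor of $u$ in $F[\nu]$ still has a neighbor in $\nu\setminus\{u\}$ (vacuous when $u$ is isolated in $F[\nu]$), since then the set of $T$-vertices of positive degree is unchanged. This second condition already covers both places the paper uses (b) through Corollary~\ref{standcount}(b):
\begin{itemize}
\item a leaf of a $T$-star $K_{1,t}$ with $t\ge 2$ in Claim~\ref{dangeroussmall};
\item a vertex $u$ with $N_F(u)\cap\nu\subseteq\{v^*\}$ in Case 3 of the proof of Lemma~\ref{inductivestep}.
\end{itemize}
So the later results are unaffected. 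Your tentative patch ``no isolated vertices after the removal'' is not the right condition, since Case 3 can leave isolated vertices of $S$. What matters is only that some $T$-vertex attaining the maximum keeps a neighbor in $\nu\setminus\{u\}$.
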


The high-level intuition for the this lemma is that given an arbitrary $\nu$, we can iteratively remove vertices as in (b) and (c) from $\nu$ until we eventually arrive at a star, and this will in turn allow us to focus most of our analysis in \Cref{sec:supersaturation} on dealing with ``bad'' stars. 

\begin{proof}
    Part (a) is straightforward.  For (b), we have
    \begin{align*}f(\nu)-f(\nu\sm \{u\})=|S\cap \nu|-|S\cap \nu \sm \{u\}|=1,\end{align*}
    where we implicitly used $e(\nu \sm \{u\})\ge 1$ to guarantee that $f(\nu\sm \{u\})$ is well-defined, and the last equality used $u\in S$.

    For (c), let $v\in \nu \cap T\sm \{v^*\}$ be an arbitrary vertex with no neighbors in $\nu$, and if no such vertex exists then let $v\in T\sm \{v^*\}$ be such that $\deg_F(v)$ is as small as possible.  We claim in either case that
    \[\max_{w\in T\cap\nu:\deg_{F[\nu]}(w)\ge 1} \deg_F(w)=\max_{w\in T\cap \nu\sm\{v\}:\deg_{F[\nu\sm\{v\}]}(w)\ge 1}\deg_F(w).\]
    This is immediate if $v$ has no neighbors in $\nu$, so we may assume that every vertex in $T\cap \nu$ has a neighbor in $\nu$.  This together with the hypothesis of (c) implies that the set $T\cap \nu \sm \{v\}$ is not empty.  The equality above then follows from the definition of $v$, as well as the trivial inequality $\deg_F(v)\le \deg_F(v^*)$ coming from the definition of $F$ being $r$-semi-bounded.   
    
    With the equality above in mind, we have
    \begin{align*}f(\nu)-f(\nu\sm \{v\})=\sum_{w\in T\cap \nu}\deg_F(w)-\sum_{w\in T\cap \nu\sm \{v\}} \deg_F(w)=\deg_F(v),\end{align*}
    proving the result.
\end{proof}

We next prove a result relating $f(\nu)$ with the more natural parameter $e(\nu)=e(F[\nu])$.

\begin{lem}\label{lem:fvse}
    Is $F$ is $r$-semi-bounded graph with respect to a triple $(S,T,v^*)$ and if $\nu \sub V(F)$ has $e(\nu)\ge 1$, then
    \[f(\nu)\ge e(\nu).\]
    Moreover, we have $f(\nu)=e(\nu)$ whenever $S\cup \{v^*\}\sub \nu$. 
\end{lem}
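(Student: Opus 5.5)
The plan is to count the edges of $F[\nu]$ from the $T$ side. Since $F$ is bipartite with parts $S$ and $T$, every edge of $F[\nu]$ has exactly one endpoint in $T\cap\nu$. Hence
\[e(\nu)=\sum_{v\in T\cap \nu}\deg_{F[\nu]}(v).\]
Let $w\in T\cap\nu$ attain the maximum in the definition of $f(\nu)$; this requires $\deg_{F[\nu]}(w)\ge 1$, which exists because $e(\nu)\ge 1$. We then split the sum at $w$:
\[e(\nu)=\deg_{F[\nu]}(w)+\sum_{v\in T\cap\nu\sm\{w\}}\deg_{F[\nu]}(v).\]

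Two elementary bounds finish the inequality. First, $\deg_{F[\nu]}(w)\le |S\cap\nu|$, since all neighbours of $w$ in $\nu$ lie in $S\cap\nu$. Second, $\deg_{F[\nu]}(v)\le \deg_F(v)$ for each $v\neq w$. Since
\[f(\nu)=|S\cap\nu|+\sum_{v\in T\cap\nu\sm\{w\}}\deg_F(v),\]
these two bounds give $e(\nu)\le f(\nu)$ directly.

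For the equality case, assume $S\cup\{v^*\}\sub\nu$. Then every $v\in T\cap\nu$ has all of its $F$-neighbours in $S\sub\nu$, so $\deg_{F[\nu]}(v)=\deg_F(v)$. Also $v^*\in\nu$ has $\deg_F(v^*)=|S|\ge 1$ (as $e(\nu)\ge1$ forces $S\ne\emptyset$). Moreover $\deg_F(v^*)\ge\deg_F(v)$ for every $v\in T$, because $v^*$ is complete to $S$. So we may choose $w=v^*$ as the maximizer. Then
\[\deg_{F[\nu]}(w)=|S|=|S\cap\nu|,\]
and every term in the split sum equals the corresponding term of $f(\nu)$, giving $f(\nu)=e(\nu)$.

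The only subtle point is being careful that the maximum in $f$ ranges over $T$-vertices with positive degree in $F[\nu]$, and that the choice of maximizer when there are ties does not matter, since only the value $\deg_F(w)$ enters. Beyond that, the argument is routine.
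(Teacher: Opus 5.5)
Your proof is correct and follows essentially the same route as the paper: count the edges of $F[\nu]$ from the $T$ side, split off the maximizer $w$, use $\deg_{F[\nu]}(w)\le|S\cap\nu|$ and $\deg_{F[\nu]}(v)\le\deg_F(v)$, and note that both bounds are equalities once $S\cup\{v^*\}\subseteq\nu$. The only cosmetic difference is in the equality case, where you take $w=v^*$ as the maximizer (legitimate, since only the maximum value enters $f$), while the paper keeps an arbitrary maximizer $w$ and deduces $\deg_{F[\nu]}(w)=|S\cap\nu|$ from $\deg_{F[\nu]}(v^*)\le\deg_{F[\nu]}(w)$.
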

\begin{proof}
    Let $w\in T\cap \nu$ be such that $\deg_F(w)=\max_{v\in T\cap \nu:\deg_{F[\nu]}(v)\ge 1} \deg_F(v)$.  Then
    \begin{align}f(\nu)=|S\cap \nu|+\sum_{v\in T\cap \nu\sm \{w\}} \deg_F(v)\ge \deg_{F[\nu]}(w)+\sum_{v\in T\cap \nu\sm \{w\}} \deg_{F[\nu]}(v)=e(\nu),\label{eq:e vs f}\end{align}
    where the inequality used that $w$ can have at most $S\cap \nu$ neighbors in $F[\nu]$ and the trivial relation $\deg_F(v)\ge \deg_{F[\nu]}(v)$.  On the other hand, it is straightforward to check that if $S\sub \nu$ then $\deg_F(v)=\deg_{F[\nu]}(v)$ for each $v\in T\cap \nu \sm \{w\}$, and that if $v^*\in \nu$ then $|S\cap \nu|=\deg_{F[\nu]}(v^*)\le \deg_{F[\nu]}(w)$ by definition of $w$, so $\deg_{F[\nu]}(w)=|S\cap \nu|$, proving that \eqref{eq:e vs f} is an equality in this case.
\end{proof}

We will also need a few additional bounds on $f$.
\begin{lem}\label{lem:fBounds}
    If $F$ is $r$-semi-bounded with respect to some triple $(S,T,v^*)$, then for all $\nu\sub V(F)$ with $e(\nu)\ge 1$ we have
    \[f(\nu)\le e(F),\]
    as well as
    \[f(\nu)\le -(r-1)(|S\cap \nu|-1)+r(|\nu|-2)+1.\]
\end{lem}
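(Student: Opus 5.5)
Both bounds come straight from the definition of $f(\nu)$. The one idea is to pin down which vertex attains the maximum in that definition. Write $s=|S\cap\nu|$, $t=|T\cap\nu|$, and let $w\in T\cap\nu$ with $\deg_{F[\nu]}(w)\ge 1$ attain $\max_{v\in T\cap\nu:\deg_{F[\nu]}(v)\ge 1}\deg_F(v)$. Then
\[f(\nu)=s+\sum_{v\in T\cap \nu\sm\{w\}}\deg_F(v).\]

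\textbf{Key observation.} If $v^*\in\nu$, we may take $w=v^*$. Indeed, $e(\nu)\ge 1$ and $F$ is bipartite with parts $S,T$, so $\nu$ contains some vertex of $S$. Since $v^*$ is complete to $S$, we get $\deg_{F[\nu]}(v^*)\ge 1$. Moreover every $v\in T\sm\{v^*\}$ satisfies $\deg_F(v)\le |S|=\deg_F(v^*)$, because its neighbours lie in $S$. So $v^*$ attains the maximum, and ties do not affect the value of $f(\nu)$.

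\textbf{First bound.} Since $T$ is one side of the bipartition, $e(F)=\sum_{v\in T}\deg_F(v)=|S|+\sum_{v\in T\sm\{v^*\}}\deg_F(v)$.
\begin{itemize}
\item If $v^*\in\nu$, take $w=v^*$. Then
\[f(\nu)=s+\sum_{v\in T\cap\nu\sm\{v^*\}}\deg_F(v)\le |S|+\sum_{v\in T\sm\{v^*\}}\deg_F(v)=e(F).\]
\item If $v^*\notin\nu$, then
\[f(\nu)\le s+\sum_{v\in T\cap\nu}\deg_F(v)\le |S|+\sum_{v\in T\sm\{v^*\}}\deg_F(v)=e(F).\]
\end{itemize}

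\textbf{Second bound.} The right-hand side simplifies to
\[-(r-1)(s-1)+r(s+t-2)+1=s+r(t-1).\]
So it suffices to show that $\sum_{v\in T\cap\nu\sm\{w\}}\deg_F(v)\le r(t-1)$.
\begin{itemize}
\item If $v^*\in\nu$, take $w=v^*$. The remaining $t-1$ vertices lie in $T\sm\{v^*\}$, so each has degree at most $r$ by $r$-semi-boundedness.
\item If $v^*\notin\nu$, all $t$ vertices of $T\cap\nu$ have degree at most $r$. Removing $w$ leaves $t-1$ terms, each at most $r$.
\end{itemize}
In both cases the sum is at most $r(t-1)$, which gives the claimed bound.

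The only step needing care is the key observation that $v^*$ attains the maximum whenever $v^*\in\nu$. This is exactly where we use that $e(\nu)\ge 1$ forces $\nu$ to contain an $S$-vertex, which is adjacent to $v^*$; everything else is bookkeeping.
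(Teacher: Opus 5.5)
Your proposal is correct and follows essentially the same route as the paper. Both proofs split the first bound on whether $v^*\in\nu$, and both get the second bound by letting $w$ be the maximizing vertex and bounding the remaining $|T\cap\nu|-1$ degrees by $r$. Your explicit check that $v^*$ attains the maximum whenever $v^*\in\nu$ (including the remark that ties do not change $f(\nu)$) spells out a step the paper uses implicitly in both parts.
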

\begin{proof}

    For the first inequality, if $v^*\in \nu$ then we have
    \[f(\nu)=|S\cap \nu|+\sum_{v\in T\cap \nu } \deg_F(v)-\deg_F(v^*)\le \sum_{v\in T\cap \nu } \deg_F(v)\le \sum_{v\in T} \deg_F(v)=e(F).\]
    On the other hand, if $v^*\notin \nu$ then
    \[f(\nu)\le \deg_F(v^*)+\sum_{v\in T\cap \nu }\deg_F(v)\le \sum_{v\in T} \deg_F(v)=e(F).\]
    
    We now turn to the second inequality.  Let $w$ be a vertex with $\deg_F(w)$ as large as possible amongst the vertices of $T\cap \nu$ with degree at least 1 in $F[\nu]$.  Because $F$ is $r$-semi-bounded, we have $\deg_F(v)\le r$ for all $v\in T\cap \nu\sm \{w\}$, and hence
    \begin{align*}f(\nu)&=|S\cap \nu|+\sum_{v\in T\cap \nu \sm \{w\}} \deg_F(v)\le |S\cap \nu|+r(|T\cap \nu|-1)\\ &=|S\cap \nu|+r(|\nu|-| S\cap \nu|-1)=-(r-1)(|S\cap \nu|-1)+r(|\nu|-2)+1.\end{align*}
\end{proof}

We now turn to some results around $A_F,B_F$, for which we recall
\[A_F=\{\nu: \del(F[\nu])\ge 1,\ f(\nu)-1< r(e(\nu)-1)\},\]
    and
    \[\al(F)=\min_{\nu\in A_F} \al(\nu)=\min_{\nu\in A_F}\frac{r(|\nu|-2)+1-f(\nu)}{r(e(\nu)-1)+1-f(\nu)},\]
    and
    \[B_F=\{\nu: \del(F[\nu])\ge 1,\ f(\nu)>e(\nu)\}.\]

\begin{lem}\label{lem:alBound}
    If $F$ is $r$-semi-bounded with respect to some $(S,T,v^*)$ and if $F$ contains a cycle, then the sets $A_F,B_F$ are non-empty (and hence $\al(F),\be(F)$ are well-defined).  Moreover, we have
    \[\al(F)\le 1.\]
\end{lem}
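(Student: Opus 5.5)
We exhibit one explicit set in each of $A_F$ and $B_F$, and then bound $\al(F)$ by $\al(\nu)$ for a suitable $\nu\in A_F$. We choose $\nu$ to contain $S\cup\{v^*\}$, so that \Cref{lem:fvse} gives $f(\nu)=e(\nu)$ and $\al(\nu)$ reduces to a simple expression.

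\textbf{$B_F$ is non-empty.} Fix a cycle $C$ in $F$. It alternates between $S$ and $T$, so it has at least two vertices in $T$; choose one, $w\in T\cap V(C)$, with $w\neq v^*$. Since $w$ lies on a cycle, $\deg_F(w)\ge 2$. Let $u\in S$ be a neighbor of $w$ and set $\nu=\{u,v^*,w\}$.
\begin{itemize}
\item Since $v^*$ is adjacent to all of $S$, $F[\nu]$ is the path $v^*uw$. Hence $\del(F[\nu])\ge 1$ and $e(\nu)=2$.
\item In the definition of $f(\nu)$, the maximum is attained at $v^*$, because $\deg_F(v^*)=|S|\ge \deg_F(w)$.
\item Therefore $f(\nu)=1+\deg_F(v^*)+\deg_F(w)-\deg_F(v^*)=1+\deg_F(w)\ge 3>e(\nu)$.
\end{itemize}
So $\nu\in B_F$.

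\textbf{$A_F$ is non-empty.} Let $\nu=S\cup\{v^*\}\cup T'$, where $T'$ is the set of vertices of $T$ with at least one neighbor in $F$.
\begin{itemize}
\item Every vertex of $S$ is adjacent to $v^*$, and every vertex of $T'$ has a neighbor in $S$. Hence $\del(F[\nu])\ge 1$, $F[\nu]$ is connected, and $F[\nu]$ contains every edge of $F$, including the cycle $C$.
\item By \Cref{lem:fvse}, $f(\nu)=e(\nu)$.
\item The condition $f(\nu)-1<r(e(\nu)-1)$ becomes $(r-1)(e(\nu)-1)>0$. This holds because $r\ge 2$ and $e(\nu)\ge 2$, the latter since $F[\nu]$ contains a cycle.
\end{itemize}
So $\nu\in A_F$.

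\textbf{The bound $\al(F)\le 1$.} For the same $\nu$, using $f(\nu)=e(\nu)$,
\[\al(\nu)=\frac{r(|\nu|-2)+1-e(\nu)}{(r-1)(e(\nu)-1)}.\]
The denominator is positive, so $\al(\nu)\le 1$ is equivalent to
\[r(|\nu|-2)+1-e(\nu)\le re(\nu)-r-e(\nu)+1,\]
which simplifies to $|\nu|-1\le e(\nu)$. This holds because $F[\nu]$ is connected; in fact $e(\nu)\ge|\nu|$ since $F[\nu]$ contains a cycle. Hence $\al(F)\le\al(\nu)\le 1$.

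The only step needing care is the choice of $\nu$ for $A_F$. Including $S\cup\{v^*\}$ is what lets \Cref{lem:fvse} replace $f$ by $e$ and makes $F[\nu]$ connected. Including only the non-isolated vertices of $T$ is what guarantees $\del(F[\nu])\ge 1$.
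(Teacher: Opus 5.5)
Your proof is correct, and your $B_F$ witness $\{u,v^*,w\}$ is the one the paper uses. You differ in the witness for $A_F$ and for the bound on $\al(F)$.

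The paper takes only the star $\nu=S\cup\{v^*\}$. There $e(\nu)=f(\nu)=|S|$, so membership in $A_F$ reduces to $(r-1)(|S|-1)>0$, using $|S|\ge 2$, which the cycle forces. It also gives $\al(\nu)=1$ exactly, with no appeal to \Cref{lem:fvse} and no connectivity argument.

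Your choice of all non-isolated vertices needs a bit more checking but proves more. Since $F[\nu]$ contains a cycle, $e(\nu)\ge|\nu|$, so your computation actually gives the strict bound $\al(F)\le\al(\nu)<1$. Moreover, $\al(\nu)=\frac{r(|\nu|-2)}{(r-1)(e(\nu)-1)}-\frac{1}{r-1}$ is the ``global 2-density'' value. In the setting of \Cref{thm:generalTightExamples} it equals $\frac{|S|-1}{|S|-1+r(|T|-1)}$, so your witness shows that the lower bound on $\al(F)$ proved there is attained.

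One small omission: you use $r\ge 2$ without justification. The definition of $r$-semi-bounded gives no a priori lower bound on $r$, so this needs a line. It follows at once from your own $B_F$ paragraph: $w\in T\sm\{v^*\}$ has $\deg_F(w)\ge 2$, so $r\ge\deg_F(w)\ge 2$. The paper records this at the start of its proof.
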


\begin{proof}
    Observe that if $F$ contains a cycle, then we must have $r\ge 2$ as otherwise $F$ would contain at most 1 vertex which has degree greater than 1, contradicting $F$ containing a cycle.  Similarly we must have $|S|\ge 2$.

    For the $A_F$ result, observe that the set $\nu=S\cup \{v^*\}$ has $e(\nu)-1=|S|-1$ and $f(\nu)-1=|S|-1$, which means $f(\nu)-1<r(e(\nu)-1)$ since $r,|S|\ge 2$, proving that $\nu\in A_F$.  Moreover, we have
    \[\al(F)\le \al(\nu)=\frac{r(|S|-1)+1-|S|}{r(|S|-1)+1-|S|}=1.\]

    We next consider $B_F$.  Because $F$ contains a cycle, there exist at least two vertices in $T$ with degree at least 2, and at least one of these vertices $v$ will not equal $v^*$.  Letting $u$ denote one of the neighbors of $v$, we see that $\nu=\{u,v,v^*\}$ satisfies \[f(\nu)=|S\cap \nu|+\deg_F(v)\ge 3>2=e(\nu),\] so $\nu\in B_F$, proving the result.
\end{proof}

In addition to the upper bound $\al(F)\le 1$ above, we will need a lower bound for $\al(F)$.

\begin{prop}\label{prop:alBound}
    If $F$ is $r$-semi-bounded with respect to $(S,T,v^*)$, if $F$ contains a cycle, and if every vertex in $S$ has degree at most $\Del$, then 
    \[\al(F) \ge \frac{r-1}{r\Del-1}.\]
\end{prop}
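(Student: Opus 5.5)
The plan is to check, for an arbitrary $\nu\in A_F$, that $\al(\nu)\ge \frac{r-1}{r\Del-1}$ by a direct computation. Write $s=|S\cap \nu|$ and $t=|T\cap \nu|$, so $|\nu|=s+t$. Since $\del(F[\nu])\ge 1$ we have $e(\nu)\ge 1$ and $s,t\ge 1$. Set
\[a=r(|\nu|-2)+1,\qquad b=r(e(\nu)-1)+1,\]
so that $\al(\nu)=\frac{a-f(\nu)}{b-f(\nu)}$, where $b-f(\nu)>0$ because $\nu\in A_F$. By the second inequality of \Cref{lem:fBounds}, the numerator satisfies
\[a-f(\nu)\ge (r-1)(s-1)\ge 0.\]

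\textbf{Case $e(\nu)\le |\nu|-1$.} Then $a\ge b$, so $\al(\nu)\ge 1\ge \frac{r-1}{r\Del-1}$ and we are done. (The last inequality holds since $\Del\ge 1$.)

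\textbf{Case $e(\nu)\ge |\nu|$.} Now $a<b$. The map $x\mapsto \frac{a-x}{b-x}$ has derivative $\frac{a-b}{(b-x)^2}<0$, so it is decreasing on $x<b$. Put $f_{\max}=a-(r-1)(s-1)$, the upper bound on $f(\nu)$ from \Cref{lem:fBounds}. Then
\[b-f_{\max}=(r-1)(s-1)+r(e(\nu)-|\nu|+1)\ge r>0,\]
so $f(\nu)\le f_{\max}<b$. By monotonicity,
\[\al(\nu)\ge \frac{a-f_{\max}}{b-f_{\max}}=\frac{(r-1)(s-1)}{(r-1)(s-1)+r(e(\nu)-|\nu|+1)}.\]
If $s=1$ the right-hand side is $0$, which is not enough, so this subcase must be handled separately (see below). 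For $s\ge 2$, clearing denominators shows that this lower bound is at least $\frac{r-1}{r\Del-1}$ if and only if
\[(s-1)(r\Del-1)\ge (r-1)(s-1)+r(e(\nu)-|\nu|+1),\]
which simplifies to
\[r(s-1)(\Del-1)\ge r(e(\nu)-s-t+1),\qquad\text{i.e.}\qquad e(\nu)\le (s-1)\Del+t.\]
This counting inequality is the key step. Fix one vertex $u\in S\cap\nu$. The other $s-1$ vertices of $S\cap \nu$ each have degree at most $\Del$, so they are incident to at most $(s-1)\Del$ edges of $F[\nu]$. Every remaining edge of $F[\nu]$ is incident to $u$, and $u$ has at most $t$ neighbours in $T\cap\nu$. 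Since $F$ is bipartite, every edge of $F[\nu]$ has exactly one endpoint in $S\cap\nu$, so these two counts cover all of $E(F[\nu])$ and give $e(\nu)\le (s-1)\Del+t$.

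\textbf{Subcase $s=1$ with $e(\nu)\ge|\nu|$.} This subcase cannot occur. If $s=1$, then $F[\nu]$ is a star centred at the single vertex of $S\cap\nu$, so $e(\nu)\le t=|\nu|-1$, contradicting $e(\nu)\ge |\nu|$.

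Combining the two cases gives $\al(\nu)\ge\frac{r-1}{r\Del-1}$ for every $\nu\in A_F$, and hence $\al(F)\ge \frac{r-1}{r\Del-1}$. The main obstacle is the counting inequality in the second case, together with the observation that the worst case is $f(\nu)$ as large as \Cref{lem:fBounds} allows. The remaining steps are routine algebra.
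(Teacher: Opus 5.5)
Your proof is correct and follows essentially the paper's route: the same split into $e(\nu)\le|\nu|-1$ (where $\al(\nu)\ge 1$) and $e(\nu)\ge|\nu|$, with $f(\nu)$ replaced by its upper bound from \Cref{lem:fBounds} to get the same intermediate bound $\frac{(r-1)(s-1)}{r(e(\nu)-t)+1-s}$. The differences are cosmetic. You use monotonicity of $x\mapsto\frac{a-x}{b-x}$ where the paper uses the elementary inequality $\frac{n}{d}\le\frac{n}{D}$, and your count $e(\nu)\le(s-1)\Del+t$ plays the role of the paper's $e(\nu)\le m|S\cap\nu|$, $|T\cap\nu|\ge m$ with $m$ the maximum degree in $F[\nu]$ of a vertex of $S\cap\nu$. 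You also treat the $s=1$ subcase explicitly, which the paper leaves implicit.
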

We note that the hypothesis of $F$ containing a cycle is needed only to guarantee that $\al(F)$ exists.
\begin{proof}
    Proving this is equivalent to showing that every $\nu\in A_F$ satisfies $\al(\nu) \ge \frac{r-1}{r\Del-1}$.  From now on we fix some arbitrary $\nu\in A_F$ and aim to show this inequality.  For this argument we make frequent use of the elementary fact that if $n,p,d,D$ are real numbers with $0<d<D$ and $n\le 0\le p$, then
    \begin{equation}
        \frac{p}{d}\ge \frac{p}{D}, \label{eq:dumb}
    \end{equation}
    and 
    \begin{equation}
        \frac{n}{d}\le \frac{n}{D}, \label{eq:dumb2}
    \end{equation}
    We will in particular start with an application of \eqref{eq:dumb} using $d:=r(e(\nu)-1)+1-f(\nu)$, which is positive by definition of $\nu\in A_F$.

    First consider the case that $e(\nu)\le |\nu|-1$.  We observe (irregardless of this case) that by \Cref{lem:fBounds}, the numerator of $\al(\nu)$ satisfies
    \[r(|\nu|-2)+1-f(\nu)\ge (r-1)(|S\cap \nu|-1)\ge 0,\]
    with this last inequality using that $\nu\in A_F$ implies $\del(F[\nu])\ge 1$, which in particular requires $S\cap \nu$ to be non-empty.  We can thus apply \eqref{eq:dumb} with $p$ the numerator of $\al(\nu)$ to conclude  in the case $e(\nu)\le |\nu|-1$ that
    \[\al(\nu)=\frac{r(|\nu|-2)+1-f(\nu)}{r(e(\nu)-1)+1-f(\nu)}\ge \frac{r(|\nu|-2)+1-f(\nu)}{r(|\nu|-2)+1-f(\nu)}=1\ge \frac{r-1}{r\Del-1},\]
    giving the  bound.

    We now consider the case $e(\nu)\ge |\nu|$.  Letting $d:=r(e(\nu)-|T\cap \nu|)+1-|S\cap \nu|$, we see by the assumption of $e(\nu)\ge |\nu|= |S\cap \nu|+|T\cap \nu|$ that $d>0$.  By \Cref{lem:fBounds} we have \[1-f(\nu)\ge (r-1)(|S\cap \nu|-1)-r(|\nu|-2)=r(|T\cap \nu|-1)+1-|S\cap \nu|,\] and hence $D:=r(e(\nu)-1)+1-f(\nu)\ge d$.   These observations together with an application of \eqref{eq:dumb2} gives
    \begin{align}\al(\nu)&=1+\frac{r(|\nu|-e(\nu)-1)}{r(e(\nu)-1)+1-f(\nu)}\nonumber \\ &\ge 1+\frac{r(|\nu|-e(\nu)-1)}{r(e(\nu)-|T\cap \nu|)+1-|S\cap \nu|} =\frac{(r-1)(|S\cap \nu|-1)}{r(e(\nu)-|T\cap \nu|)+1-|S\cap \nu|}.\label{eq:dumbApp}\end{align}
    Let $m=\max_{u\in S\cap \nu} \deg_{F[\nu]}(u)$.  Trivially we have $e(\nu)\le m|S\cap \nu|$ and $|T\cap \nu|\ge m$, which in total implies 
    \[d=r(e(\nu)-|T\cap \nu|)+1-|S\cap \nu|\le (rm-1)(|S\cap \nu|-1):=D'.\]
    We can thus apply \eqref{eq:dumb} to \eqref{eq:dumbApp} to conclude that
    
    \[\al(\nu)\ge \frac{(r-1)(|S\cap \nu|-1)}{(rm-1)(|S\cap \nu|-1)}=\frac{r-1}{rm-1}\ge \frac{r-1}{r\Del -1}.\]
    We conclude the result.
\end{proof}

Finally, we record a simplification of $f(\nu)$ in the following special case, the formulation of which is straightforward to verify.
\begin{lem}\label{lem:simple}
    If $F$ is $r$-semi-bounded with triple $(S,T,v^*)$ and if every $v\in T\sm \{v^*\}$ has degree exactly $r$, then every $\nu\sub V(F)$ with $e(\nu)\ge 1$ satisfies
    \[f(\nu)=|S\cap \nu|+r(|T\cap \nu|-1),\]
    and hence every $\nu\in A_F$ has
    \[\al(\nu)=\frac{(r-1)(|S\cap \nu|-1)}{r(e(\nu)-|T\cap \nu|)+1-|S\cap \nu|}.\]
\end{lem}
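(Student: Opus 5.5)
We verify the formula for $f(\nu)$ directly from the definition and then substitute it into the definition of $\al(\nu)$.

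\textbf{The formula for $f(\nu)$.} Fix $\nu$ with $e(\nu)\ge 1$. Recall
\[f(\nu)=|S\cap \nu|+\sum_{v\in T\cap \nu}\deg_F(v)-\max_{v\in T\cap\nu:\ \deg_{F[\nu]}(v)\ge 1}\deg_F(v).\]
Every $v\in T\sm\{v^*\}$ has degree exactly $r$, and $\deg_F(v^*)=|S|$. We split according to whether $v^*\in \nu$.

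\emph{Case $v^*\notin\nu$.} Every vertex of $T\cap\nu$ has degree $r$, so the sum equals $r|T\cap\nu|$. The maximum equals $r$, because $e(\nu)\ge1$ guarantees the index set is non-empty. This gives $f(\nu)=|S\cap\nu|+r(|T\cap\nu|-1)$.

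\emph{Case $v^*\in\nu$.} The sum equals $|S|+r(|T\cap\nu|-1)$. We need the maximum to equal $|S|$, which holds if $v^*$ is eligible and has the largest degree.
\begin{itemize}
\item $v^*$ has largest degree: when $F$ contains a cycle, $|S|\ge r$, since any $v\in T\sm\{v^*\}$ with $\deg_F(v)=r$ has its $r$ neighbours in $S$. So $\deg_F(v^*)\ge \deg_F(w)$ for all $w\in T$.
\item $v^*$ is eligible: here we need $\deg_{F[\nu]}(v^*)\ge 1$. If $v^*$ were isolated in $F[\nu]$, then $S\cap\nu=\emptyset$, since $v^*$ is adjacent to all of $S$. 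But then $e(\nu)=0$, a contradiction.
\end{itemize}
Hence the maximum is $|S|$ and again $f(\nu)=|S\cap\nu|+r(|T\cap\nu|-1)$.

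The only delicate point is $|S|\ge r$. It holds whenever some $v\in T\sm\{v^*\}$ exists. If no such vertex exists, then $T\cap\nu=\{v^*\}$, the maximum is $\deg_F(v^*)$, and the formula again holds.

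\textbf{The formula for $\al(\nu)$.} For $\nu\in A_F$, substitute $f(\nu)=|S\cap\nu|+r(|T\cap\nu|-1)$ and use $|\nu|=|S\cap\nu|+|T\cap\nu|$.

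For the numerator,
\[r(|\nu|-2)+1-f(\nu)=r|S\cap\nu|+r|T\cap\nu|-2r+1-|S\cap\nu|-r|T\cap\nu|+r=(r-1)(|S\cap\nu|-1).\]

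For the denominator,
\[r(e(\nu)-1)+1-f(\nu)=re(\nu)-r+1-|S\cap\nu|-r|T\cap\nu|+r=r(e(\nu)-|T\cap\nu|)+1-|S\cap\nu|.\]

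Taking the ratio gives exactly the stated expression for $\al(\nu)$.
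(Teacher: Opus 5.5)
Your proof is correct and is exactly the direct verification the paper intends; the paper states this lemma without proof as ``straightforward to verify.'' Your route (splitting on whether $v^*\in\nu$, then substituting $f(\nu)$ into $\al(\nu)$) matches it. One simplification: $\deg_F(w)\le |S|=\deg_F(v^*)$ holds trivially for every $w\in T$, since all neighbours of $w$ lie in $S$ (as the paper notes in proving \Cref{lem:remove}(c)), so neither the cycle remark nor the separate case $T\sm\{v^*\}=\emptyset$ is needed.
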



\section{Balanced Supersaturation}\label{sec:supersaturation}

Ever since the foundational work of Morris and Saxton \cite{morris2016number}, it has become standard  to prove upper bounds on random Tur\'an numbers by proving an ``edge--balanced supersaturation result.''  Informally, such a result says that in any graph $G$ with much more than $\ex(n,F)$ edges, one can find a collection $\c{H}$ of copies of $F$ such that no set of \textit{edges} of $G$ appears in too many copies of $\c{H}$.  Such a result combined with the powerful method of hypergraph containers quickly leads to effective bounds on $\ex(G_{n,p},F)$.

While we ultimately require an edge--balanced supersaturatoin result to prove our theorems, it will be convenient for us to initially prove a ``vertex--balanced supersaturation result'' which asks to find a collection $\c{H}$ of copies of $F$ such that no set of \textit{vertices} of $G$ appears in too many copies of $\c{H}$, where the exact notion of ``too many'' depends on the structure of the vertex sets under consideration.  Such a result immediately translates into an analogous edge--balanced supersaturation result, and by taking this vertex perspective at the start we will be able to prove our desired bounds inductively.  The idea of using vertex--balanced supersaturation first appeared implicitly in the work of Morris and Saxton~\cite{morris2016number} with it being made explicit by McKinley and Spiro~\cite{mckinley2023random} and later used in work of Nie and Spiro~\cite{nie202X}.

In order to have any hope of proving a balanced supersaturation result, we first need to know how to prove a supersaturation result.  For our setting, Conlon, Fox, and Sudakov \cite{conlon2010approximate} used dependent random choice to give a supersaturation result for $r$-semi-bounded graphs.  Very roughly, their idea is to build copies of $F$ by starting with some ``typical'' high-degree vertex $y^*\in V(G)$ to play the role of $v^*$, then choosing neighbors of $y^*$ to play the role of $S$, and then choosing the vertices to play the rest of $T$ from there.  While this is roughly the same line of argument we use here, we need to be extra cautious in our approach.  In particular, because we ultimately aim to construct a balanced collection of copies where no vertex in $G$ is in too many copies of $F$, we will need to impose some strong regularity conditions on the vertices of $G$ which were not needed in \cite{conlon2010approximate}.

\subsection{Vertex Supersaturation}


To state our main vertex balanced supersaturation statement, we define a function which measures how ``balanced'' a given set of vertices needs to be.

\begin{defn}
    Given an $r$-semi-bounded $F$ and a set of parameters $q,n,\del$, we define the function $D:2^{V(F)}\to \R\cup \{\infty\}$ by setting $D(\nu)=\infty$ if $F[\nu]$ has no edges, and otherwise we set
    \[D(\nu)=\del^{-|\nu|}q^{e(F)-f(\nu)}n^{v(F)-|\nu|}.\]
\end{defn}	

\begin{thm}\label{findingacollection}
If $F$ is an $r$-semi-bounded graph, then there exists sufficiently small $\ve, \delta>0$ and sufficiently large $C>0$ depending on $F$ such that the following holds. Let $G$ be a graph such that $q := \frac{e(G)}{n^2}$ satisfies $q \geq C n^{- 1 /r}$. Then, there exists a collection $\Phi$ of embeddings $\varphi: F \rightarrow G$ such that the following two conditions hold:
\begin{enumerate}
\item $|\Phi| \geq \ve q^{e(F)} n^{v(F)}$
\item For any $\nu \subseteq V(F)$ and any embedding $\psi: F[\nu] \rightarrow G$, the number of $\varphi \in \Phi$ such that $\varphi|_{F[\nu]} = \psi$ is less than $D(\nu)$. 
\end{enumerate} 
\end{thm}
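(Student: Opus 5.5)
We plan to prove \Cref{findingacollection} by a greedy/deletion argument built on a dependent random choice supersaturation count, in the spirit of Morris--Saxton and McKinley--Spiro. First we regularize $G$. By passing to a subgraph we may assume $G$ has minimum degree at least $qn$ (up to constants) while keeping $\Omega(qn^2)$ edges. By dyadic pigeonholing on degrees we pass further to a bipartite-like piece: a set $Y$ of candidate images for $v^*$, all of degree roughly $d\ge qn$, such that the number of edges incident to $Y$ is $\Omega(qn^2)$ up to logarithmic losses. This regularization replaces the ``typical vertex'' used by Conlon, Fox, and Sudakov, and it is what lets us bound how many copies pass through a fixed vertex. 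Here $\del$ absorbs these constant losses.

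Next comes the core count. Fix $y^*\in Y$ to play $v^*$ and embed $S$ into $N(y^*)$ via dependent random choice. The key step is a \emph{good $r$-set} lemma. Call an $r$-set $R\sub N(y^*)$ (or, for vertices of $T$ of degree $<r$, a smaller set) \emph{good} if it has at least $\del q^{r}n$ common neighbors. Choose $S$ injectively into $N(y^*)$ and then each $v\in T\sm\{v^*\}$ among the common neighbors of the image of $N_F(v)$. The hypothesis $q\ge Cn^{-1/r}$ guarantees $q^r n\ge C^r$, so common neighborhoods are large and degenerate (non-injective) choices are negligible. A convexity/Hölder computation (Sidorenko-type for stars, as in \cite{conlon2010approximate}) then gives $\Omega(q^{e(F)}n^{v(F)})$ embeddings in total.

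To enforce balance we do not take all copies. Instead we build $\Phi$ greedily, adding an embedding only if, for every $\nu$ and every $\psi$, it does not push the count of extensions of $\psi$ to $D(\nu)$ or above. We must then show that saturation cannot block most copies. Suppose the process stops with $|\Phi|<\ve q^{e(F)}n^{v(F)}$. Then most of the counted embeddings contain some saturated restriction $\psi$ on some $\nu$. The number of saturated $\psi$ for a given $\nu$ is at most $|\Phi|/D(\nu)$.

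It therefore suffices to show that each $\psi$ on $\nu$ extends to at most roughly $q^{e(F)-f(\nu)}n^{v(F)-|\nu|}$ counted embeddings, because then the blocked total is at most $\ve\,\del^{|\nu|}\cdot O(q^{e(F)}n^{v(F)})$, which is small. This upper bound on extensions is exactly where \Cref{lem:remove} enters, and we prove it by induction on $|\nu|$. Starting from $\nu$, we add back vertices one at a time in the reverse order of \Cref{lem:remove}.
\begin{itemize}
\item Adding a vertex $u\in S$ costs a factor $qn$ (the degree of the $y^*$-image), matching $f$ increasing by $1$ in part (b).
\item Adding a vertex $v\in T\sm\{v^*\}$ whose neighbors are all already embedded costs at most the common-neighborhood size, namely $q^{\deg_F(v)}n$. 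This matches $f$ increasing by $\deg_F(v)$ in part (c).
\end{itemize}
This requires an \emph{upper} bound on common neighborhoods. We enforce it by restricting, in the counting step, to $r$-sets whose common neighborhood is at most $Kq^{r}n$, and we check that sets with atypically large common neighborhood account for few copies. If that check fails, we instead weight the choices. The base case is when $F[\nu]$ is a star or an edge, where $f(\nu)=1$ by part (a). For a star centered at the image of $v^*$, or at a high-degree $T$-vertex, we use the degree regularity from the first step.

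The main obstacle is the ``bad stars'' base case together with the upper bounds on common neighborhoods. Dependent random choice naturally yields lower bounds on common neighborhoods, but the balance condition needs upper bounds, and a single vertex of huge codegree could be contained in far too many copies. To handle this, I would first try to pass to the subgraph of pairs and $r$-sets of typical codegree. I would charge the copies lost to atypical sets via a second-moment count of $\sum_R d(R)^2$, which is bounded by counting copies of $K_{r,2}$-type configurations.
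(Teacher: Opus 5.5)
Your overall framework matches the paper: take a maximal $D$-good family, and show that a too-small $\Phi$ cannot block every member of a large family of copies. The gap is in the reduction you rely on, namely that every $\psi$ on $\nu$ has at most about $q^{e(F)-f(\nu)}n^{v(F)-|\nu|}$ extensions among the counted embeddings. This bound is not available, and the fixes you propose do not recover it.

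Take $\nu=\{w\}\cup N_F(w)$ with $w\in T\sm\{v^*\}$ of degree $r$. Then $f(\nu)=r$. So you would need the number of images of $v^*$ adjacent to all of $\psi(N_F(w))$ to be $O(q^rn)$, which is an upper bound on codegrees of $r$-tuples in the image of $S$.

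Such bounds can fail for every tuple at once. Let $G$ be a disjoint union of copies of $K_{2qn,2qn}$ with $q=o(1)$. Every $r$-tuple then has common neighborhood $0$ or $\Theta(qn)\gg q^rn$. Restricting to tuples of typical codegree therefore leaves nothing. Also, $\sum_R d(R)^2$ counts $K_{r,2}$'s, and nothing bounds it from above; supersaturation only gives lower bounds. Yet the theorem holds for this $G$.

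``Weighting'' the choices could at best control the $T\sm\{v^*\}$ vertices. It cannot control $v^*$, which is placed first.

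Two smaller problems:
\begin{itemize}
\item Dyadic pigeonholing ``up to logarithmic losses'' is incompatible with the constant $\ve$ in the statement.
\item Stars with more than one edge do not have $f(\nu)=1$.
\end{itemize}

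The paper never needs an extension upper bound, and it uses three ingredients in its place.

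\textbf{Upper bounds imposed by construction.} The paper passes to a bipartite $H$ in which every $y\in Y$ has degree exactly $\rho m=qn/8$. It gets this by deleting edges, with no loss. For each non-light, non-dangerous $a$-tuple $\mathbf{x}$ it fixes a subset $N'(\mathbf{x})\sub N(\mathbf{x})$ of size between $\half\eta^2\rho^am$ and $\rho^am$. The image of a $T$-vertex must be chosen from this subset.

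\textbf{Choosing the critical vertex last.} It bounds $|\c{B}_\nu|$ not as (number of saturated $\psi$) times (maximum number of extensions). Instead it places last the vertex $v$ (or $u$) supplied by \Cref{lem:remove}. All other vertices are counted with the forward bounds $m$, $\rho m$ and $\rho^{\deg_F}m$. Given $\psi'=\varphi|_{\nu\sm\{v\}}$, the $D$-goodness of $\Phi$ on $\nu\sm\{v\}$ together with double counting gives at most $2D(\nu\sm\{v\})/D(\nu)=2\del q^{\deg_F(v)}n$ saturated completions (\Cref{double counting}(b), \Cref{standcount}). This makes codegree upper bounds through $v^*$ unnecessary.

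\textbf{Excluding saturated $T$-stars outright.} A saturated $T$-star centered at $w\ne v^*$ is the one case where no vertex can be placed last. The paper keeps such stars out of the construction. The sets $N'(\mathbf{x})$ avoid them. The vertex $y^*$ is taken ``embeddable'', so that few tuples in $N(y^*)$ are light or dangerous; this uses the bound on $\sum_{\mathbf{x}}d^*(\mathbf{x})$ derived from \Cref{standcount}(b).

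You correctly identified the bad stars as the main obstacle, but this third ingredient is what actually resolves them.
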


To prove this we make the following auxiliary definitions.

\begin{defn}
    Let $G$ be a graph with $n$ vertices and $qn^2$ edges, and let $\Phi$ be a collection of embeddings of $F$ into $G$. Given some partial embedding $\psi:F[\nu]\to G$,   define \[\deg_{\Phi}(\psi) = |\{\varphi \in \Phi: \varphi|_{\nu} = \psi\}|.\] We say $\Phi$ is \emph{$D$-good} if for any $\nu \subseteq V(F)$ and partial embedding $\psi: F[\nu] \rightarrow G$ into $G$, we have $\deg_{\Phi} (\psi) \leq D(\nu)$. 
    
    We say that a partial embedding $\psi: F[\nu] \rightarrow G$ is \textit{saturated} (with respect to $\Phi$) if we have $\deg_{\Phi}(\psi) \geq \frac{1}{2}D(\nu)$. 
    We will additionally say that a subgraph $K \subseteq E(G)$ is a \textit{saturated $T$-star} if $K$ is a star and if there exists a star $K'\sub F$ with center in $T$ and an isomorphism $\psi:K' \rightarrow K$ with $\deg_{\Phi}(\psi)\ge \half D(V(K'))$.  If $K$ is a star with one edge we will refer to this simply as a \textit{saturated edge}.
\end{defn}  

In light of these definitions, we prove some results showing that $D$-good collections have few saturated structures.  We begin with a standard double counting result.

\begin{lem}\label{double counting}
    Let $F$ be an $r$-semi-bounded graph, $G$ a graph with $n$ vertices and $qn^2$ edges, and $\Phi$ a $D$-good collection.
    \begin{itemize}
        \item[(a)] For each $\nu\sub V(F)$ with $e(F[\nu])>0$, let $\xi_\nu$ denote the number of partial embeddings $\psi: F[\nu] \rightarrow G$ such that $\deg_{\Phi}(\psi) \geq \frac{1}{2} D(\nu)$.  Then
        \[\xi_\nu\le 2\frac{|\Phi|}{D(\nu)}.\]
        \item[(b)] For each $\nu',\nu \sub V(F)$ with $\nu'\sub \nu$ and $e(F[\nu])>0$, and for each partial embedding $\psi':F[\nu']\to G$; let $\xi_{\psi',\nu}$ denote the number of partial embeddings $\psi: F[\nu] \rightarrow G$ such that $\deg_{\Phi}(\psi) \geq \frac{1}{2} D(\nu)$ and such that $\psi|_{F[\nu']}=\psi'$.  Then
        \[\xi_{\psi',\nu}\le 2 \frac{D(\nu')}{D(\nu)}.\]
    \end{itemize}
\end{lem}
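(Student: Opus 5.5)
Both parts follow from one double count: sum $\deg_\Phi(\psi)$ over the relevant partial embeddings $\psi$, and compare the result with the number of full embeddings in $\Phi$ (for (a)) or with the $D$-goodness bound at $\nu'$ (for (b)).

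\emph{Part (a).} Every $\varphi\in\Phi$ restricts to exactly one partial embedding $\varphi|_{F[\nu]}$. Summing over all partial embeddings $\psi:F[\nu]\to G$ therefore gives
\[\sum_{\psi}\deg_\Phi(\psi)=|\Phi|.\]
Restrict the sum to those $\psi$ with $\deg_\Phi(\psi)\ge \half D(\nu)$. Each such term contributes at least $\half D(\nu)$, so
\[\xi_\nu\cdot \half D(\nu)\le |\Phi|.\]
Since $e(F[\nu])>0$, the quantity $D(\nu)$ is finite and positive, and we may rearrange to get $\xi_\nu\le 2|\Phi|/D(\nu)$.

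\emph{Part (b).} Fix $\psi':F[\nu']\to G$. Each $\varphi\in\Phi$ with $\varphi|_{\nu'}=\psi'$ restricts to exactly one $\psi:F[\nu]\to G$, and that $\psi$ extends $\psi'$. Hence
\[\sum_{\psi:\ \psi|_{\nu'}=\psi'}\deg_\Phi(\psi)=\deg_\Phi(\psi')\le D(\nu'),\]
where the inequality is the $D$-goodness of $\Phi$. Keeping only the terms with $\deg_\Phi(\psi)\ge\half D(\nu)$ gives
\[\xi_{\psi',\nu}\cdot\half D(\nu)\le D(\nu'),\]
which is the claimed bound.

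The only point needing care is the case where $F[\nu']$ has no edges. There $D(\nu')=\infty$ by definition, so the bound in (b) holds trivially and nothing needs to be checked. In every other case both $D(\nu)$ and $D(\nu')$ are finite and positive, so the division above is legitimate.
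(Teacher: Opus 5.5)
Your proof is correct and is essentially the paper's argument. The paper counts pairs $(\varphi,\psi)$ with $\varphi|_{F[\nu]}=\psi$ and $\deg_\Phi(\psi)\ge \frac{1}{2}D(\nu)$. It bounds this count below by $\frac{1}{2}D(\nu)\,\xi$ and above by $|\Phi|$ for (a), or by $\deg_\Phi(\psi')\le D(\nu')$ for (b), which is exactly your degree-sum double count.
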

\begin{proof}
    For (a), let $\mu_{\nu}$ be the number of pairs $(\varphi, \psi)$ with $\psi: F[\nu] \rightarrow G$, $\varphi \in \Phi$,  $\deg_{\Phi}(\varphi) \geq \frac{1}{2} D(\nu)$, and $\varphi|_{F[\nu]} = \psi$.   
    Observe then that for any  $\nu \subseteq F$ we have
    \begin{equation}\frac{1}{2}D(\nu)\xi_{\nu} \leq  \mu_{\nu} \leq |\Phi|,\label{eq:double counting}\end{equation}
    with the upper bound using that specifying $\varphi\in \Phi$ for the pair specifies $\psi$, and the lower bound using that each $\psi$ counted by $\xi_\nu$ belongs to at least $\half D(\nu)$ pairs by definition.   Because $e(F[\nu])>0$ we have $D(\nu)\ne \infty$, and therefore we can divide both sides of \eqref{eq:double counting} by $\half D(\nu)$ to obtain the first result.

    For (b), let $\mu_{\psi',\nu}$ be the number of pairs $(\varphi, \psi)$ with $\psi: F[\nu] \rightarrow G$, $\varphi \in \Phi$,  $D_{\Phi}(\varphi) \geq \frac{1}{2} D(\nu)$, and $\varphi|_{F[\nu]} = \psi$ and $\psi|_{F[\nu]}=\psi'$.  In this case we have
    \[\frac{1}{2}D(\nu)\xi_{\nu,\psi'} \leq  \mu_{\nu,\psi'}\le D(\nu'),\]
    where now the upper bound uses that the number of $\varphi\in \Phi$ which agree with $\psi'$ on $\nu'$ is at most $\deg_{\Phi}(\psi')\le D(\nu')$ since $\Phi$ is $D$-good.  This gives the second result, finishing the proof.
\end{proof}

This in turn gives an analog of \Cref{lem:remove}.

\begin{cor}\label{standcount}
    Let $F$ be an $r$-semi-bounded graph, $G$ a graph with $n$ vertices and $qn^2$ edges, and $\Phi$ a $D$-good collection with $|\Phi| < \ve q^{e(F)} n^{v(F)}$. Then the following statements holds: 
    \begin{enumerate}
        \item[(a)] The number of saturated edges of $G$ is at most $2 e(F) \ve \delta^2 e(G)$.  
        
        \item[(b)] If $\nu \sub V(F)$ and $u\in S \cap \nu$ is such that $F[\nu\sm \{u\}]$ contains at least one edge, then for any partial embedding $\psi'$ of $\nu\sm \{u\}$, the number of embeddings $\psi$ of $F[\nu]$ with $\deg_{\Phi}(\psi)\ge \half D(\nu)$ and with $\psi|_{F[\nu\sm \{u\}]}=\psi'$ is at most $2\del qn$.

        \item[(c)] If $\nu \sub V(F)$ and if there exists a vertex in $T \cap \nu\sm \{v^*\}$ which is not incident to every edge of $F[\nu]$, then there exists some $v\in T\cap \nu\sm\{v^*\}$ such that for any partial embedding $\psi'$ of $\nu\sm \{v\}$, the number of embeddings $\psi$ of $F[\nu]$ with $\deg_{\Phi}(\psi)\ge \half D(\nu)$ and with $\psi'|_{F[\nu\sm \{v\}]}=\psi$ is at most $2\del q^{\deg_F(v)} n$.

    \end{enumerate} 
\end{cor}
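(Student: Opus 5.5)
Each part is a direct combination of \Cref{double counting} with the corresponding part of \Cref{lem:remove}. The only computation is the ratio $D(\nu')/D(\nu)$ for $\nu'\sub\nu$ with $|\nu\sm\nu'|=1$. From the definition,
\[\frac{D(\nu')}{D(\nu)}=\frac{\del^{-|\nu|+1}q^{e(F)-f(\nu')}n^{v(F)-|\nu|+1}}{\del^{-|\nu|}q^{e(F)-f(\nu)}n^{v(F)-|\nu|}}=\del q^{f(\nu)-f(\nu')}n.\]
This is valid whenever both $F[\nu]$ and $F[\nu']$ contain an edge, so that both values of $D$ are finite.

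For (a), take $\nu$ to be the two endpoints of an edge $xy$ of $F$. By \Cref{lem:remove}(a), $f(\nu)=1$, so $D(\nu)=\del^{-2}q^{e(F)-1}n^{v(F)-2}$. By \Cref{double counting}(a), the number of saturated embeddings $\psi:F[\nu]\to G$ is at most
\[\frac{2|\Phi|}{D(\nu)}<2\ve q^{e(F)}n^{v(F)}\del^2q^{1-e(F)}n^{2-v(F)}=2\ve\del^2qn^2=2\ve\del^2e(G).\]
A saturated edge of $G$ is the image of some saturated embedding of some edge of $F$. Summing over the $e(F)$ edges of $F$ therefore gives at most $2e(F)\ve\del^2e(G)$ saturated edges. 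This absorbs both orientations of each image edge, since we only need an upper bound.

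For (b), set $\nu'=\nu\sm\{u\}$. Since $F[\nu']$ has an edge, so does $F[\nu]$, and \Cref{double counting}(b) applies with $\psi'$. It gives at most $2D(\nu')/D(\nu)=2\del q^{f(\nu)-f(\nu')}n$ saturated extensions. By \Cref{lem:remove}(b), $f(\nu)-f(\nu')=1$, which yields the bound $2\del qn$.

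For (c), take the vertex $v$ provided by \Cref{lem:remove}(c), so that $f(\nu)-f(\nu\sm\{v\})=\deg_F(v)$, and set $\nu'=\nu\sm\{v\}$. We need $F[\nu']$ to contain an edge so that $D(\nu')$ is finite. By hypothesis some vertex $w\in T\cap\nu\sm\{v^*\}$ misses an edge of $F[\nu]$. In the proof of \Cref{lem:remove}(c), $v$ is either isolated in $F[\nu]$, or every vertex of $T\cap\nu$ has an $F[\nu]$-neighbour and then some edge avoids $v$. Either way $F[\nu']$ has an edge; this case check is the only mildly delicate point. Then \Cref{double counting}(b) gives at most $2\del q^{\deg_F(v)}n$ extensions, as claimed.
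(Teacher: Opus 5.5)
Your proposal is correct and follows the paper's proof essentially line by line: each part combines \Cref{double counting} with the matching part of \Cref{lem:remove} via the ratio $D(\nu')/D(\nu)=\del q^{f(\nu)-f(\nu')}n$, and your computation in (a) is the paper's, summing over the $e(F)$ edges of $F$. Your extra check in (c) that $F[\nu\sm\{v\}]$ still contains an edge, so that $D(\nu\sm\{v\})<\infty$, is a detail the paper leaves implicit, and your case argument for it is sound.
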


\begin{proof}
    For (a), we observe by definition that for each saturated edge $e$ that there exists a partial embedding $\psi:F[\nu]\to G$ with $F[\nu]$ an edge and with $\psi$ a map counted by $\xi_\nu$.   By definition of $D$ and \Cref{lem:remove}(a), any $\nu$ with $F[\nu]$ an edge has $D(\nu)=\del^{-2} q^{e(F)-1}n^{v(F)-2}$.  As such, \Cref{double counting}(a) implies the number of saturated edges is at most
    \[\sum_{\nu:F[\nu]\cong K_2} \xi_\nu\le e(F)\cdot 2 |\Phi| \del^{2} q^{1-e(F)} n^{2-v(F)}< e(F)\cdot 2 \ve \del^2 e(G),\]
    with this last step using our hypothesis on $|\Phi|$.  This proves (a).

    For (b), the quantity we wish to upper bound is exactly
    \[\xi_{\psi',\nu}\le 2 \frac{D(\nu\sm \{u\})}{D(\nu)}=2 \del q^{f(\nu)-f(\nu\sm \{u\})} n=2\del q n,\]
    with the inequality used \Cref{double counting}(b), the first equality implicitly used that $\nu\sm \{u\}$ contains at least one edge (so that $D(\nu\sm \{u\})\ne \infty$), and the second equality used \Cref{lem:remove}(b).

    For (c), let $v$ be the vertex guaranteed by \Cref{lem:remove}(c).  Again, the quantity we wish to upper bound is exactly
    \[\xi_{\psi',\nu}\le 2 \frac{D(\nu\sm \{u\})}{D(\nu)}=2 \del q^{f(\nu)-f(\nu\sm \{u\})} n=2\del q^{\deg_F(v)} n,\]
    proving the result.
\end{proof}

We now prove our main technical lemma which allows us to inductively build our $D$-good collection $\Phi$.

\begin{lem}\label{inductivestep}
If $F$ is an $r$-semi-bounded graph, then there exist $\ep,\del,C>0$ such that the following holds.  If $G$ is a graph with $n$ vertices and $qn^2 \geq Cn^{2 -1 /r}$ edges, and if $\Phi$ is a $D$-good collection with $|\Phi| < \ve  q^{e(F)}n^{v(F)}$, then there exists an embedding $\varphi$ of $F$ into $G$ such that $\varphi \not \in \Phi$ and $\Phi \cup \{\varphi\}$ is $D$-good.
\end{lem}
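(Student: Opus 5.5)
We will construct $\varphi$ by a dependent-random-choice embedding in the spirit of Conlon--Fox--Sudakov, steering every step away from the saturated structures controlled by \Cref{standcount}. Note first that $\Phi\cup\{\varphi\}$ is $D$-good as soon as no restriction $\varphi|_{F[\nu]}$ with $e(F[\nu])\ge 1$ is saturated. Indeed, each $\deg_\Phi(\psi)$ then rises by at most one from a value below $\half D(\nu)$, and $D(\nu)\ge 2$ for $\del$ small, since $q\le 1$ and $f(\nu)\le e(F)$ by \Cref{lem:fBounds}. If $F[\nu]$ has no edges then $D(\nu)=\infty$, so there is nothing to check. Moreover such a $\varphi$ is automatically not in $\Phi$: if it were, then taking $\nu=V(F)$ gives $\deg_\Phi(\varphi)\ge 1\ge\half D(V(F))$ (we have $f(V(F))=e(F)$ by \Cref{lem:fvse}, so $D(V(F))=\del^{-v(F)}$), meaning $\varphi$ itself is saturated. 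So the goal is to find an embedding none of whose restrictions to edge-containing $\nu$ is saturated.

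Step 1 is to clean $G$. Delete all saturated edges; by \Cref{standcount}(a) this removes at most $2e(F)\ve\del^2 e(G)$ edges. We then pass to a subgraph $G'$ with at least $\half qn^2$ edges, with minimum degree $\Omega(qn)$, and with a vertex $y^*$ of degree $\Theta(qn)$ that will play $v^*$.

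Step 2 is the embedding, done in the natural order: first $v^*\mapsto y^*$, then embed $S$ into $N(y^*)$, then embed the vertices of $T\sm\{v^*\}$ one at a time. By dependent random choice (taking an $r$-tuple of random vertices and their common neighbourhood inside $N(y^*)$) we find a set $U\sub N(y^*)$ with $|U|=\Omega(qn)$ in which all but a tiny fraction of $r$-subsets have at least $\Omega(q^r n)$ common neighbours. The lower bound $q\ge Cn^{-1/r}$ is exactly what makes $q^r n\ge C^r$ large. We then embed $S$ greedily into $U$ and each $v\in T\sm\{v^*\}$ into the common neighbourhood of the images of its at most $r$ neighbours.

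Step 3, at each vertex addition, discards the candidate images that would complete a saturated partial embedding, and bounds the number discarded. Adding a vertex $u\in S$ (with current vertex set $\nu$) creates new restrictions $\psi$ on sets $\nu'\ni u$. If $e(\nu'\sm\{u\})\ge1$, then \Cref{standcount}(b) bounds the bad images by $2\del qn$ per $\nu'$, which is a small fraction of $|U|$ when $\del$ is small. If $\nu'\sm\{u\}$ has no edge, then $F[\nu']$ is a star centred at $u$ or at some $T$-vertex; star restrictions centred in $T$ must be pre-excluded in the cleaning step as ``saturated $T$-stars'', using \Cref{double counting}(a) with $D$ of a star together with the trivial count. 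Adding a vertex $v\in T$ is handled by \Cref{standcount}(c), which gives at most $2\del q^{\deg v}n$ bad choices, a small fraction of the $\Omega(q^{\deg v}n)$ available common neighbours. The catch is that (c) only guarantees \emph{some} removable $v$, not the one we are adding. We therefore order the argument by applying (c) to $\nu'$ and removing the guaranteed vertex, and iterate: we bound the number of saturated extensions of all sub-configurations rather than only the last vertex, summing over the $2^{v(F)}$ subsets.

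The main obstacle is the star/saturated-$T$-star case. Stars are precisely where \Cref{lem:remove}(b),(c) stop applying, and bounding saturated $T$-stars needs a supersaturation-type count. Our plan is to show there are at most $O(\ve\del^{|\nu|})q^{|\nu|-1}n^{|\nu|}$ of them, so that after deleting them the dependent random choice count, which is weighted by $q^{r}$, still survives; choosing $\ve\ll\del\ll 1$ and $C$ large closes the argument.
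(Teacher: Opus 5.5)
There is a genuine gap, and it starts with your claim that an embedding with no saturated restriction is automatically outside $\Phi$. Since $f(V(F))=e(F)$, we have $\frac12 D(V(F))=\frac12\delta^{-v(F)}$. For the small $\delta$ you need, this is far larger than $1$; indeed you yourself use $D(\nu)\ge 2$. So $\deg_\Phi(\varphi)=1$ never makes $\varphi$ saturated. For example, $\Phi=\{\varphi_0\}$ is $D$-good with nothing saturated, and nothing in your procedure stops it from returning $\varphi_0$.

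Yet $\varphi\notin\Phi$ is the whole content of the lemma, since it is what forces a maximal $D$-good family to be large. The only way to secure it is by counting: you must exhibit more than $|\Phi|$, i.e.\ $\gg\varepsilon q^{e(F)}n^{v(F)}$, admissible embeddings. This cannot be done with a single $y^*$ fixed in Step 1. There are only $O(q^{e(F)}n^{v(F)-1})$ copies with $v^*\mapsto y^*$. Moreover, in a graph whose degrees and $a$-wise codegrees ($a\le r$) have their expected order, the family of all these copies is itself $D$-good and contains no saturated partial embedding (one checks $\deg(\psi)=O(\delta^{|\nu|})D(\nu)$, using exactly the definition of $f$). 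So $\Phi$ may contain every candidate your procedure can output.

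The paper instead lets $\varphi(v^*)$ range over $\Omega(m)$ ``embeddable'' vertices $y$. These are vertices for which, for every $1\le a\le r$, few $a$-tuples in $N(y)$ are light or dangerous. This averaging over $y$ is the real dependent random choice step. By contrast, taking the common neighbourhood of a random $r$-tuple inside $N(y^*)$, as you describe, would give a set of size about $q^{r+1}n$, not $\Omega(qn)$. The paper then builds an explicit family $\mathcal{A}$ with $|\mathcal{A}|=\Omega(\rho^{e(F)}m^{v(F)})$. It shows that the subfamily $\mathcal{B}$ with some saturated restriction has $|\mathcal{B}|=O(\delta\rho^{e(F)}m^{v(F)})$, and picks $\varphi\in\mathcal{A}\setminus(\mathcal{B}\cup\Phi)$, using $\varepsilon\ll\eta^{2|T|}$.

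This global count is also what resolves the difficulty you flagged with \Cref{standcount}(c). A vertex-by-vertex greedy check cannot use (c) when the vertex it supplies is not the one currently being added, and ``iterating over subsets'' does not by itself fix that. For each fixed $\nu$, the paper bounds $|\mathcal{B}_\nu|$ by reordering the count:
\begin{itemize}
\item In Case 2, the vertex supplied by (c) is chosen last.
\item In Case 3, an $S$-vertex $u$ with $N_F(u)\cap\nu\subseteq\{v^*\}$ is chosen once all of $\nu\setminus\{u\}$ is fixed, and (b) is applied.
\end{itemize}
This reordering needs upper bounds on every candidate set that match the lower bounds. That is why the paper cleans to exact degree $\rho m$ on $Y$ and truncates to $|N'(\mathbf{x})|\le\rho^a m$. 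Your sketch has only minimum-degree information, so it lacks these upper bounds.

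Finally, your bound $2\varepsilon\delta^{t+1}q^tn^{t+1}$ on saturated $T$-stars via \Cref{double counting}(a) is correct. However, such stars cannot simply be deleted in advance, and they may all sit on tuples inside $N(y^*)$ for the particular $y^*$ you fixed. The paper handles them as follows. It calls a tuple dangerous when at least half of its common neighbourhood closes a saturated $T$-star. It shows few $y$ have many dangerous tuples in $N(y)$, which is part of being embeddable. It then embeds each $v\in T\setminus\{v^*\}$ into $N'(\varphi(N_F(v)))$. All of this must be done for every tuple size $1\le a\le r$, not just $a=r$.
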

\begin{proof}
Our strategy  will be to find a large set $\c{A}$ of copies of $F$ which avoids saturated $T$-stars and edges. We then show that in this set there are few elements $\c{B}$ which contain any saturated set at all. Then, since $\c{A} \setminus \c{B}$ is large, there is an element of $\c{A} \setminus \c{B}$ which is not in $\Phi$, and such an element will satisfy the conditions of the lemma.  In what follows, we let 
\begin{align*}
    \eta &= \frac{1}{2^{s + 1}r^2},  & &\text{ } & \ve &= \frac{1}{2^{v(F) + 4 + 3e(F) }}\eta^{2|T|},  \\
    \delta &= \min\left\{\frac{\varepsilon}{2^{v(F) + 3r + 3}}, \frac{\eta}{2^{v(F) + 8}r^{r + 2}}\right\},  & &\text{ and } & C &=8 \max\left\{\left( \frac{8v(F)}{ \eta^2} \right)^{1 / r}, \delta^{- v(F)} 2^{v(F)} \right\}. 
\end{align*} 

We begin with a simple cleaning argument, where here and throughout we omit floors and ceilings for ease of presentation. 
\begin{claim}
    There exists a bipartite subgraph $H \subseteq G$ with bipartition $(X, Y)$ such that none of the edges of $H$ are saturated, $e(H)\ge  \frac{1}{16}q v(H)^2$, $|Y|\ge \half v(H)$, and every vertex in $Y$ has degree exactly $\frac{1}{8}qn$.  
\end{claim}
\begin{proof}
        Let $G'$ be $G$ after deleting all of its saturated edges.  By \Cref{standcount}(a), there are no more than $4 e(F) \delta^2 \ve qn^2$ saturated edges, so by choice of $\delta, \ve$ we have $e(G') \geq \frac{1}{2}qn^2$.  Let $G''\sub G'$ be a bipartite subgraph with at least half of the edges of $G'$, which means $e(G'')\ge \frac{1}{4} qn^2$.  Now let $G'''$ be the graph obtained from $G''$ by iteratively removing vertices of degree less than $\frac{1}{8}qn$. We remove no more than $\frac{1}{8}qn^2$ edges in this process, so $G'''$ is a bipartite graph with at least $\frac{1}{8}qn^2$ edges and minimum degree at least $\frac{1}{8} qn$.  
        
        Let $(X,Y)$ be a bipartition of $G'''$, and without loss of generality we may assume $|X|\le |Y|$.  Delete edges from $G'''$ so that every vertex of $Y$ has degree exactly $\frac{1}{8}qn$ and let $H$ be the resulting graph.  Since $H\sub G'$ this graph has no saturated edges, and by construction it has $e(H)=\frac{1}{8} qn |Y|\ge \frac{1}{16} q v(H)^2$, giving the result.
\end{proof}
From now on we let $m:=v(H)$ and $\rho:= \frac{1}{8} q n m^{-1}$ where we think of $m,q$ as playing the analogous roles of $n,q$ for $H$.  Observe that $\rho\ge \frac{1}{8}q$ and for all $y \in Y, \deg_H(y) = \rho m$. 
 
    Given an $a$-tuple of vertices $\mathbf{x}$, we let $d(\mathbf{x})$ be the number of vertices in the common neighborhood of $\mathbf{x}$ and we let  $d^*(\mathbf{x})$ denote the number of vertices $y \in N(\mathbf{x})$ such that the star formed from the vertices of $\mathbf{x}$ and $y$ contains some substar which is a saturated $T$-star. 
    
    \begin{claim}\label{dangeroussmall}
        $\sum_{\mathbf{x} \in X^a} d^*(\mathbf{x}) \leq 4^{v(F)+2}a^{a+1} \del \rho^a m^{a+1}$. 
    \end{claim}
    Note that the related sum $\sum_{\mathbf{x} \in X^a} d(\mathbf{x})$ is roughly equal to $\rho^a m^{a+1}$ since for each of the at most $m$ vertices $y\in Y$ there are exactly $\rho^a m^a$ tuples $\mathbf{x}$ with $y\in N(\mathbf{x})$ since $\deg_H(y)= \rho m$.  The point of this claim then is that by restricting to $d^*(\mathbf{x})$ we reduce this total degree sum by a factor of roughly $\del$.
    \begin{proof}
        We first show that the number of saturated $\psi:F[\nu]\to V(H)$ such that $F[\nu]$ is a $K_{1,t}$ with center in $T$ and such that the center of $F[\nu]$ is mapped into $Y$ is at most  $4^{v(F)+2} \del \rho^t m^{t+1}$ for all $1\le t\le a$.  Indeed, the result is trivial for $t=1$ since $H$ contains no saturated edges by construction, so we may assume $2\le t\le a$ from now on.  Choose $\nu$ in at most $2^{v(F)}$ ways so that $F[\nu]$ is a $K_{1,t}$ with center in $T$, and let $u\in \nu$ be an arbitrary leaf which can also be chosen in at most $2^{v(F)}$ ways.  We can trivially map the center of $\nu$ in at most $m$ ways for it to be mapped into $Y$, and from there we can map the $t-1$ vertices of $\nu\sm \{u\}$ in at most $\rho^{t-1}m^{t-1}$ ways since each vertex of $Y$ has degree exactly $\rho m$ in $H$.  Let $\psi'$ denote this embedding of $\nu\sm \{u\}$.  By \Cref{standcount}(b), the number of saturated embedddings $\psi$ of $\nu$ which extend $\psi'$ is at most $2\del qn=16 \del \rho m$.  In total then we conclude that the number of choices for $\psi$ is at most
        \[4^{v(F)}\cdot m\cdot \rho^{t-1} m^{t-1}\cdot 16 \del \rho m= 4^{v(F)+2} \del \rho^t m^{t+1},\]
        proving the subclaim.
        
        Returning to the main claim, we ultimately need to bound the number of pairs $(\mathbf{x},y)$ such that $y\in Y\cap N(\mathbf{x})$ and such that the star formed by $y$ and the vertices of $\mathbf{x}$ contains a saturated $T$-substar $K_{1,t}$.  The total number of saturated $T$-substars $K_{1,t}$ is at most  $4^{v(F)+2} \del \rho^t m^{t+1}$ since by definition any such substar must have a saturated mapping to it of the form discussed in the subclaim above.  Given such a substar, the choice for $y$ as well as $t$ of the vertices in $\mathbf{x}$ is determined, and the number of choices for the remaining $a-t$ vertices of $\mathbf{x}$ is at most $\rho^{a-t}m^{a-t}$ since $y$ has degree exactly $\rho m$.  There are then at most $a!\le a^a$ choices for the sequence $\mathbf{x}$ containing all of these vertices, so in total the number of choices for $(\mathbf{x},y)$ with a given value of $t$ is at most $4^{v(F)+2}a^a \del \rho^a m^{a+1}$ and summing this over all $a$ values of $t$ gives the desired bound.
    \end{proof}

We say an $a$-tuple $\mathbf{x}$ is \textit{light} if $d(\mathbf{x}) \leq \eta^2 \rho^a m$. We say an $a$-tuple $\mathbf{x}$ is \textit{dangerous} if  $d^*(\mathbf{x}) \geq  \frac{1}{2} d(\mathbf{x})$. 
\begin{claim}
    If $\mathbf{x}$ is an $a$-tuple which is neither light nor dangerous, then there exists a set $N'(\mathbf{x})\sub N(\mathbf{x})$ with
    \[\half \eta^2 \rho^a m\le |N'(\mathbf{x})|\le \rho^a m\]
    such that for all $y\in N'(\mathbf{x})$, the star formed with $y$ and the vertices of $\mathbf{x}$ contains no saturated $T$-star as a subgraph.
\end{claim}
\begin{proof}
    By definition, the set $N''(\mathbf{x})\sub N(\mathbf{x})$ of vertices $y$ such that the star formed with $y$ and the vertices of $\mathbf{x}$ contains no saturated $T$-star as a subgraph is a set of size
    \[d(\mathbf{x})-d^*(\mathbf{x})\ge \half d(\mathbf{x})\ge \half \eta^2 \rho^a m.\]
    Taking any subset of $N''(\mathbf{x})$ of size at most $\rho^a m$ then gives the desired result.
\end{proof}

From now on we fix some specific choice of $N'(\mathbf{x})$ for each such $\mathbf{x}$ as in the claim.  Ultimately, when building our large collection of copies of $F$, we will want to embed $v^*$ into a vertex $y^*\in Y$ such that most of the tuples in $N(y^*)$ are neither light nor dangerous.  To this end, for an integer $a$ we say a vertex $y$ is light-unembeddable at $a$ if the number of light $a$ tuples in  $N(y)$ is greater than $\eta \rho^am^a$. We say a vertex $v$ is dangerous-unembeddable at $a$ if the number of dangerous $a$-tuples in $N(v)$ is greater than $\eta \rho^am^a$. We say $y\in Y$ is embeddable if there exists no $1\le a\le r$ such that $y$ is either light-unembeddable or dangerous-unembeddable at $a$. 

\begin{claim}\label{manyembeddable}
    There are at least $\frac{1}{4} m $ embeddable vertices. 
\end{claim}
\begin{proof}

Let $\gamma_{\ell, a}$ be the number of $y$ which are light-unembeddable at $a$.  We count $\mu_{\ell, a}$, the number of pairs $(\mathbf{x},y)$ where $y$ is light-unembeddable at $a$ and $\mathbf{x}$ is a tuple in the neighborhood of $y$ which is light, i.e.\ which has $d(\mathbf{x}) \leq \eta^2 \rho^a m $.  Then 
\begin{align*}
     \mu_{\ell, a} &\leq \sum_{\mathbf{x} \text{ light}} d(\mathbf{x})\\
     &\leq \eta^2 \rho^am^{a + 1}  
\end{align*}

On the other hand, 
 \begin{align*}
     \mu_{\ell, a} &\geq \sum_{y \text{ light-unembeddable}} \eta \rho^a m^a\\ &= \eta \rho^a m^a \gamma_{\ell, a}. 
 \end{align*}

Therefore, $\gamma_{\ell, a} \leq \eta m$.  Similarly, let $\gamma_{d, a}$ be the number of vertices which are  dangerous-unembeddable at $a$ and $\mu_{d, a}$ the number of pairs $(\mathbf{x},y)$ with $y$ dangerous-unembeddable at $a$ and $\mathbf{x}$ an $a$-tuple in the neighborhood of $y$ which is dangerous, i.e.\ which is such that $d^*(\mathbf{x}) \geq \frac{1}{2} d(\mathbf{x})$. Then, by Claim~\ref{dangeroussmall},
\begin{align*}
   \mu_{d, a} &\leq \sum_{\mathbf{x} \text{ dangerous}} d(\mathbf{x}) \\
   &\leq 2 \sum_{\mathbf{x} \text{ dangerous}} d^*(\mathbf{x}) \\
   &\leq 2 \sum_{\mathbf{x}  \in X^a} d^*(\mathbf{x})\\
   &\leq 2^{2v(F)+5}a^{a+1} \del \rho^a m^{a+1}.
\end{align*}

On the other hand, the same reasoning as above gives 
$$\mu_{d, a} \geq \eta \rho^a m^a \gamma_{d, a}.$$

Thus, $\gamma_{d, a} \leq \eta^{-1} 2^{2v(F)+5}a^{a+1} \del m$. Summing over all $1 \leq  a \leq r$, we have that the number of embeddable vertices is at least \[|Y|- \eta r m - \eta^{-1} 2^{2v(F)+5}r^{r+ 2} \del m \geq \frac{1}{4} m,\] with this last step using $|Y|\ge m/2$ by construction and our choice of $\delta, \eta$, giving the desired result.

\end{proof}

We also observe the following.

\begin{claim}
    If $y\in Y$ is embeddable, then there exist at least $2^{-|S| - 1} \rho^{|S|} m^{|S|}$ tuples $(x_1,\ldots,x_{|S|})\in N(y)^{|S|}$ of distinct vertices such that for all $1\le a\le r$ no $a$-tuple of these vertices is either light or dangerous.
\end{claim}
\begin{proof}
    Form an auxiliary hypergraph $\c{N}$ on $N(y)$ where a set of size $1\le a\le r$ is an edge if it is either light or dangerous.  Observe then that the tuples we wish to find are exactly ordered independent sets of size $|S|$ in $\c{N}$.  For each hyperedge of $\c{N}$ of size $1\le a\le r$, we trivially have that the number of tuples $(x_1,\ldots,x_{|S|})$ which contain all the vertices of this hyperedge is at most $|S|^a v(\c{N})^{|S|-a}=|S|^a(\rho m)^{|S|-a}$.  
      By definition of $y$ being embeddable, the number of hyperedges of $\c{N}$ of size $a$ is at most $2\eta \rho^a m^a$, so in total the number of tuples $(x_1,\ldots,x_{|S|})$ of vertices which do not form an independent set is at most
    \[\sum_{a=1}^r 2 |S|^r \eta \rho^{|S|} m^{|S|}= 2r |S|^r \eta \rho^{|S|} m^{|S|}.\]
    On the other hand, the number of tuples of distinct vertices is at least
    \[(v(\c{N})-|S|)^{|S|}=(\rho m-|S|)^{|S|}\ge 2^{-|S|} \rho^{|S|} m^{|S|},\]
    with this last inequality using $\rho m \geq \frac{1}{8} qn\ge 2 |S|$ since $C$ is sufficiently large.  Therefore, by choice of $\eta$, there are at least $2^{-|S| - 1}\rho^{|S|}m^{|S|}$ many tuples of distinct vertices containing no $a$-tuple of vertices which is either light or dangerous. 
\end{proof}

We now construct a large collection $\c{A}$ of embeddings $\varphi$ of $F$ as follows.  We start by setting $\varphi(v^*)$ to be any of the at least $\quart m$ embeddable vertices $y^*\in Y$.  We then pick some tuple $(x_1,\ldots,x_{|S|})\in N(y^*)^{|S|}$ of distinct vertices such that no $a$-tuple of these vertices with $1\le a\le r$ is either light or dangerous and then embed each vertex of $S$ into one of these $x_i$ vertices in an arbitrary order.  Finally, for each $v\in T\sm \{v^*\}$, say with $S_v\sub S$ its neighborhood in $F$, we choose $\varphi(v)$ to be any vertex in $N'(\varphi(S'))$ which is not equal to any other vertex in the image.  It is not difficult to see that each $\varphi$ constructed in this way is an embedding of $F$ and that the total set $\c{A}$ of embeddings constructed in this way satisfies
\begin{equation}|\c{A}|\ge \quart m\cdot  2^{-|S| - 1} \rho^{|S|} m^{|S|}\cdot \prod_{v\in T\sm \{v^*\}} (\half \eta^2 \rho^{\deg_F(v)} m-v(F))\ge \frac{\eta^{2|T|}}{2^{|T| + |S| + 3}} \rho^{e(F)} m^{v(F)},\label{eq:size of A}\end{equation}
with this last inequality using $\frac{\eta^2}{4} \rho^{\deg(v_i)} m \geq \frac{\eta^2}{4}(\frac{1}{8}q)^{\deg(v_i) - 1} (\frac{1}{8} qn) \geq 2v(F)$ by choice of $C$, and also that $|S|+\sum_{v\in T\sm \{v^*\}} \deg_F(v)=e(F)$ since $\deg_F(v^*)=|S|$.
    

We seek to show that there exists some $\varphi\in \c{A}$ such that we can add $\varphi$ to $\Phi$. We will ultimately be able to do this if $\varphi\notin \Phi$ and if there is no $\nu \subseteq V(F)$ such that $D_{\Phi}(\varphi|_{F[\nu]}) \geq \frac{1}{2}D(\nu)$. In this vein, we define \[\c{B}_\nu = \{ \varphi \in \A: \deg_{\Phi}(\varphi|_{F[\nu]}) \geq \frac{1}{2}D(\nu) \},\] and $\c{B} = \bigcup_{\nu \subseteq V(F)} \c{B}_\nu$. 
\begin{claim}
    $|\c{B}|\le 2^{v(F) + 3r+  1}  \del \rho^{e(F)}m^{v(F)}$.
\end{claim}

\begin{proof}
    We will show $|\c{B}_\nu|\le 2^{3r+1} \del \rho^{e(F)} m^{v(F)}$ for all $\nu\sub V(F)$, from which the result will follow.  We prove this through some case analysis on $\nu$.  Implicitly whenever we are in Case $i$ we will assume that we are not in any previous Case $j$ with $j<i$.

    \textbf{Case 1:} $\nu$ is an edge of $F$. In this case $\c{B}_\nu=\emptyset$ since $H$ has no saturated edges.

    \textbf{Case 2:} 
    $\nu$ contains a vertex in $T \cap \nu\sm \{v^*\}$ which is not incident to every edge of $F[\nu]$.  Let $v\in T\cap \nu\sm\{v^*\}$ be a vertex guaranteed by \Cref{standcount}(c)
    
    
    We will upper bound $|\c{B}_\nu|$ by counting all the ways we have to pick the vertex playing the role of $v^*$, 
     then the vertices playing the role of $S$, then $T\sm \{v, v^*\}$, then finally $v$.  Trivially, the number of choices for $y^*$ playing the role of $v^*$ is at most $m$, and by our bounded degree condition on $H$, the number of choices for the vertices of $S$ is at most $(\rho m)^{|S|}$.  Once these are picked, each $w\in T\sm \{v^*,v\}$ has its neighborhood $S_v=N_F(w)$ embedded, so $\varphi(w)$ must lie in $N'(\varphi(S_v))$, which by definition of $N'$ means the number of choices for $w$ is at most $\rho^{\deg_F(w)}m$.  It only remains now to bound the number of choices for $v$.

    Observe that at this point we have already specified $\varphi$ on all the vertices of $\nu\sm \{v\}$ since $\nu\sm \{v\}\sub S\cup T\sm \{v\}$.  Thus by \Cref{standcount}(c) and the fact that $\Phi$ is good with respect to $D$, the number of possible $y$ we could choose to play the role of $v$ so that $\varphi|_{F[\nu]}$  is saturated (i.e.\ so that $\varphi\in \c{B}_\nu$) given that we have already specified $\varphi|_{F[\nu\sm \{v\}]}$ is at most $2 \del q^{\deg_F(v)} n \leq 2^{3r + 1} \delta \rho^{\deg_F(v)} m$.  In total then we find that
    \[|\c{B}_\nu|\le m\cdot \rho^{|S|}m^{|S|}\cdot 2^{3r + 1} \delta \prod_{w\in T\sm \{v^*\}} \rho^{\deg_F(w)}m= 2^{3r+1} \delta \rho^{e(F)} m^{v(F)}, \] giving the desired bound. 

    \textbf{Case 3:} $\nu$ contains a vertex $u\in S$ with $N_F(u)\cap \nu\sub \{v^*\}$ (that is, $u$ is either isolated in $\nu$ or adjacent only to the dominating vertex $v^*$).  
    Similar to the previous case, we will upper bound $|\c{B}_\nu|$ by counting all the ways we have to pick the vertex playing the role of $v^*$,then the vertices playing the role of $S\sm \{u\}$, then $T\sm N_F(u)$, then $u$, then $N_F(u)\sm \{v^*\}$. As before we have $m$ choices for $v^*$, $(\rho m)^{|S| - 1}$ choices for the vertices of $S\sm \{u\}$, and for each $ v \in T \sm N_F(u)$, we have no more than $\rho^{\deg_F(v)}m$ choices.  Crucially at this point we have specified $\varphi$ on every vertex of $\nu\sm \{u\}$ by assumption of the case we are in.  Thus by \Cref{standcount}(b), the number of choices for $u$ such that $\varphi|_{F[\nu]}$ is saturated is at most $2 \delta qn  < 16 \delta \rho m$. Given $u$, there are no more than $\rho^{\deg_F(v)}m$ choices for each vertex $v \in N_F(u) \setminus v^*$, which combined with the previous counts finishes this case.
    
    \textbf{Case 4:} We are not in any of the cases above. Observe that this case implies that $\nu$ consists of some $w\in T\sm \{v^*\}$ together with some subset of its neighbors.  Indeed, not being in Case 2 implies that every edge in $\nu$ is incident to a single vertex $w\in T$, and not being in Case 3 implies both that $w\ne v^*$ (since otherwise every $v\in S\cap \nu$ would have $N_F(v)\cap \nu\sub \{w\}=\{v^*\}$) and that every vertex in $S\cap \nu$ must be adjacent to $w$, giving the stated observation. In particular, every $\varphi \in B_{\nu}$ has the image of $\varphi|_{F[\nu]}$ being a saturated $T$-star of $H$.  But by construction, every $\varphi \in \c{A}$ is such that $\varphi(w)\in N'(\varphi(N_F(w)))$ and by definition this implies that $\varphi|_{F[\nu]}$ can not be a saturated $T$-star.  This gives a contradiction, showing that in fact $\c{B}_\nu=\emptyset$ in this case. Summing up over all $\nu$, the claim follows. 
\end{proof}

We now seek to find a $\varphi$ in $\A$ which we can add to $\Hc$ and still have a $D$-good collection. Recalling our bound on $|\c{A}|$ from \eqref{eq:size of A}, we find $|\A| \geq 2|\c{B}|$ by the claim and our choice of $\delta$, and in particular, $|\A \setminus \c{B}| > 2^{3 e(F)} \ve \rho^{e(F)}m^{v(F)}$ by our choice of $\ve$. Therefore, $|\A \setminus \c{B}| > \ve q^{e(F)}n^{v(F)} \geq |\Hc|$, so there is some $\varphi \in \A \setminus (\c{B} \cup \Hc)$. By definition of $\c{B}$, we have that $\Phi\cup \{\varphi\}$ is $D$-good, proving the result.  
    \end{proof}
With this we can prove our main vertex balanced supersaturation result.  
\begin{proof}[Proof of \Cref{findingacollection}]
Suppose the conditions of \Cref{findingacollection} holds. Take any maximal family $\cH$ of embeddings of $F$ satisfying condition $2$. By maximality, there is no $\varphi: F \rightarrow G$ such that $\varphi \not \in \cH$ and $\cH \cup \{\varphi\}$ is $D$-good. By \Cref{inductivestep} we must have $|\cH| \geq \ve q^{e(F)}n^{v(F)}$, proving the result.
\end{proof}


\subsection{Edge Supersaturation}

Ultimately we need an \textit{edge} balanced supersaturation result to work with hypergraph containers.  We will use a simple translation of our vertex balanced supersaturation result to give such a statement.
\begin{thm}\label{thm:balancedSupersatEdges}
    If $F$ is $r$-semi-balanced, then there exists some $C>0$ such that if $G$ is an $n$-vertex graph with $qn^2$ edges such that $qn^2\ge C n^{2-1/r}$, then there exists a hypergraph $\c{H}$ on $E(G)$ whose hyperedges are copies of $F$ in $G$ and is such that $|\c{H}| \ge C^{-1} q^{e(F)} n^{v(F)}$ and such that for every $\sig\sub E(G)$ with $1\le |\sig|\le e(F)$, we have
    \[\deg_{\c{H}}(\sig)\le C q^{e(F)-1}n^{v(F)-2}\left(\max_{\nu\sub V(F): \del(F[\nu])\ge 1,\ e(\nu)\ge 2}[q^{1-f(\nu)}n^{2-|\nu|}]^{1/(e(\nu)-1)}\right)^{|\sig|-1}.\]
\end{thm}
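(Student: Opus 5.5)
We apply \Cref{findingacollection} to $G$ (taking $C$ at least the constant there) to get a collection $\Phi$ of embeddings with $|\Phi|\ge \ve q^{e(F)}n^{v(F)}$ that is $D$-good. We then let $\c{H}$ be the multiset of edge sets $\varphi(E(F))$, $\varphi\in\Phi$, and pass to a set. Each copy of $F$ in $G$ arises from at most $|\mathrm{Aut}(F)|\le v(F)!$ embeddings, so $|\c{H}|\ge \ve q^{e(F)}n^{v(F)}/v(F)!$. This gives the first condition once $C$ is large.

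For the codegree bound, fix $\sig\sub E(G)$ with $1\le |\sig|\le e(F)$. Every copy in $\c{H}$ containing $\sig$ comes from some $\varphi\in\Phi$ with $\sig\sub \varphi(E(F))$. Such a $\varphi$ determines a set $E'\sub E(F)$ with $|E'|=|\sig|$ and $\varphi(E')=\sig$. Let $\nu$ be the vertex set spanned by $E'$, so $\del(F[\nu])\ge 1$. Then $\varphi|_{F[\nu]}$ is one of at most $v(F)!$ partial embeddings $\psi$ whose image contains $\sig$ (its values on $\nu$ are fixed by the bijection $E'\to\sig$ up to choices). Summing over the boundedly many choices of $E'$ and $\psi$, the $D$-good condition gives
\[\deg_{\c{H}}(\sig)\le O_F(1)\max_{\nu}D(\nu)=O_F(1)\max_\nu \del^{-|\nu|}q^{e(F)-f(\nu)}n^{v(F)-|\nu|},\]
where $\nu$ ranges over sets with $\del(F[\nu])\ge 1$ and $e(\nu)\ge |\sig|$. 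For $|\sig|=1$ we may take $\nu$ to be a single edge, where $f=1$ by \Cref{lem:remove}(a), giving $Cq^{e(F)-1}n^{v(F)-2}$. One subtlety: when $|\sig|=1$ a larger $\nu$ could a priori be forced. We avoid this by choosing $\nu$ as the span of $E'$ itself, and $D$-goodness on that exact $\nu$ suffices.

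It remains to compare $q^{1-f(\nu)}n^{2-|\nu|}$ with the bracketed maximum raised to the power $|\sig|-1$, for $|\sig|\ge 2$. Write $M=\max_{\nu'}[q^{1-f(\nu')}n^{2-|\nu'|}]^{1/(e(\nu')-1)}$. For our $\nu$ we have $e(\nu)\ge|\sig|\ge 2$, so
\[q^{1-f(\nu)}n^{2-|\nu|}\le M^{e(\nu)-1}.\]
The main obstacle is converting the exponent $e(\nu)-1$ into $|\sig|-1$. When $E'$ spans $\nu$ with $|E'|=|\sig|$, we get $e(\nu)\ge|\sig|$, which goes the wrong way unless $M\le 1$. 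So we first check that $M\le 1$ under $q\ge Cn^{-1/r}$, or at least that we may assume so. Alternatively, we bound via the induced subgraph directly. Taking $\nu$ equal to the span of $\sig$, $D$-goodness bounds the count by $D(\nu)$ irrespective of how many edges $F[\nu]$ has. We then need
\[q^{1-f(\nu)}n^{2-|\nu|}\le M^{|\sig|-1},\]
and this holds when $M\ge 1$ if $|\sig|\le e(\nu)$ and $q^{1-f(\nu)}n^{2-|\nu|}\ge 1$.

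So the plan is to split into two cases: $M\ge 1$ and $M<1$. In the case $M<1$, we use that some single edge set already forces the quantity below $1$, and replace $\nu$ by a spanning subgraph with exactly $|\sig|$ edges; monotonicity of $D$ under the deletion steps of \Cref{lem:remove} controls the change. This case is where I expect to need care. The factor $\del^{-|\nu|}$ is absorbed into $C$.
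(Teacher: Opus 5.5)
There is a genuine gap in the last step, the conversion of $x_\nu:=q^{1-f(\nu)}n^{2-|\nu|}$ into $M^{|\sig|-1}$. Everything before it matches the paper and is fine:
\begin{itemize}
\item applying \Cref{findingacollection};
\item dividing by automorphisms to get $|\c{H}|$;
\item bounding $\deg_{\c{H}}(\sig)$ by a sum of $\deg_\Phi(\psi)\le D(\nu)$ over the boundedly many partial embeddings $\psi$ of $F[\nu]$ onto $V(\sig)$, where $\del(F[\nu])\ge 1$ and $e(\nu)\ge |\sig|$;
\item the case $|\sig|=1$.
\end{itemize}
You correctly note that $x_\nu\le M^{e(\nu)-1}\le M^{|\sig|-1}$ would follow from $M\le 1$. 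However, you never prove $M\le 1$, and the case split you propose instead does not work.

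In the branch $M\ge 1$ you claim $x_\nu\le M^{|\sig|-1}$ from $|\sig|\le e(\nu)$ and $x_\nu\ge 1$. This is backwards. If $\nu$ attains the maximum then $x_\nu=M^{e(\nu)-1}$, which exceeds $M^{|\sig|-1}$ whenever $M>1$ and $e(\nu)>|\sig|$. Conversely, the branch $M<1$, which you flag as the delicate one, is immediate from $x_\nu\le M^{e(\nu)-1}\le M^{|\sig|-1}$. Your proposed fix there is not meaningful:
\begin{itemize}
\item passing to a spanning subgraph with exactly $|\sig|$ edges fails because both $D$ and the maximum in the statement are indexed by induced subgraphs $F[\nu]$;
\item single-edge sets play no role, since the maximum only ranges over $\nu$ with $e(\nu)\ge 2$.
\end{itemize}

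The missing idea is that the hypothesis $q\ge Cn^{-1/r}$ is exactly what forces $x_\nu\le 1$ for every $\nu$ with $e(\nu)\ge 1$. The argument runs as follows.
\begin{itemize}
\item By \Cref{lem:fvse}, $f(\nu)\ge e(\nu)\ge 1$, so the exponent $1-f(\nu)$ is nonpositive.
\item Hence $q\ge n^{-1/r}$ gives $q^{1-f(\nu)}\le n^{(f(\nu)-1)/r}$.
\item Since $\nu$ contains an edge, $|S\cap \nu|\ge 1$, and \Cref{lem:fBounds} gives $f(\nu)\le r(|\nu|-2)+1$.
\item Therefore $x_\nu\le n^{(f(\nu)-1)/r+2-|\nu|}\le 1$.
\end{itemize}
In particular $M\le 1$, so only your trivial branch ever occurs. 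Equivalently, as in the paper, $x_\nu=x_\nu^{(e(\nu)-1)/(e(\nu)-1)}\le x_\nu^{(|\sig|-1)/(e(\nu)-1)}\le M^{|\sig|-1}$, using $x_\nu\le 1$ and $|\sig|\le e(\nu)$. Adding this one observation and deleting the case split turns your argument into the paper's proof.
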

To give some partial justification to this strange looking function, we note that because $f(\nu)\ge e(\nu)$ and $q\le 1$, this maximum is always at least $q^{-1} \max n^{(2-|\nu|)/(e(\nu)-1)}=q^{-1} n^{-1/m_2(F)}$, so the bound we get is always no better than $q^{-1} n^{-1/m_2(F)}$.  


\begin{proof}
    Let $F$ be an $r$-semi-bounded graph. By \Cref{findingacollection}, there exists a collection $\Phi$ of embedddings $\varphi:F \rightarrow G$ such that the following two conditions hold: 
    \begin{enumerate}
        \item $|\Phi| \geq \ve q^{e(F)}n^{v(F)}.$
        \item For any $\nu \subseteq V(F)$ and any $\psi: F[\nu] \rightarrow G$, the number of $\varphi \in \Phi$ such that $\varphi|_{F[\nu]} = \psi$ is less than $D(\nu)$. 
    \end{enumerate}

    Given a set of edges $\sig\sub E(G)$, we say that a partial embedding $\psi: F[\nu]\to G$ is a $\sig$-embedding if the image of $\psi$ equals $V(\sig)$ (which is the set of vertices used in an edge of $\sig$) and if for each $e\in \sig$ there exists an edge of $F[\nu]$ which maps onto $e$.  Let $\c{H}$ be the hypergraph formed on $E(G)$ where a set $\sigma$ of $e(F)$ edges forms a hyperedge if there is some map $\varphi \in \c{G}$ which is a $\sig$-embedding.  For each set of edges $\sigma\subseteq E(G)$, let $\Psi_\sigma$ denote the set of $\sig$-embeddings.  
    
    As before we let $\deg_{\Phi}(\psi)$ denote the number of $\phi\in \Phi$ which restrict to $\psi$.   Is not difficult to see then that for any $\sigma \subseteq E(G)$ we have  $\deg_{\c{H}}(\sigma) \leq \sum_{\psi \in \Psi_\sig} \deg_{\Phi}(\psi)$.  In particular, if $|\sigma|=1$, say with $\sig=\{e\}$, then there are only $2e(F)$ possible $\sig$-embeddings and every $\sig$-embedding $\psi:F[\nu]\to G$ has $\nu$ an edge of $F$, hence \[\deg_{\Phi}(\psi)\le D(\nu)=\del^{-2} q^{e(F)-f(\nu)} n^{v(F)-2}=\del^{-2} q^{e(F)-1} n^{v(F)-2},\]
    with this last step using \Cref{lem:remove}(a).  In total this implies that for $|\sig|=1$ we have
    \[\deg_{\c{H}}(\sig)\le 2e(F) \del^{-2}  q^{e(F)-1} n^{v(F)-2},\]
    which gives the desired bound for $C$ sufficiently large.

    For $\sigma > 1$, every $\psi : F[\nu] \rightarrow G$ which is a $\sig$-embedding has $e(\nu)\ge |\sig|\ge 2$, and hence

    \begin{align*}
        \deg_{\Phi}(\psi) &\leq \delta^{ - |\nu|}q^{e(F) - f(\nu)}n^{v(F) - |\nu|}\\
        &\leq \delta^{- |\nu|}q^{e(F) - 1}n^{v(F) - 2} \left( q^{1  - f(\nu) } n^{2 - |\nu|}\right)\\
        &\leq  \delta^{- |\nu|}q^{e(F) - 1}n^{v(F) - 2} \left( q^{1  - f(\nu) } n^{2 - |\nu|}\right)^{\frac{e(\nu) - 1}{e(\nu) - 1}}\\
        &\leq  \delta^{- |\nu|}q^{e(F) - 1}n^{v(F) - 2} \left( q^{1  - f(\nu) } n^{2 - |\nu|}\right)^{\frac{|\sigma| - 1}{e(\nu) - 1}}\\
        &\leq \delta^{- v(F)} q^{e(F) - 1}n^{v(F) - 2}\left( \max_{\nu\sub V(F): \del(F[\nu])\ge 1,\ e(\nu)\ge 2}[q^{1-f(\nu)}n^{2-|\nu|}]^{1/(e(\nu)-1)}\right)^{|\sigma| - 1},
    \end{align*}
    with this second to last step using that $|\sig|\le e(\nu)$ and that the term inside the parenthesis is at most 1 since $q\ge n^{-1/r}$ and $f(\nu)\le r(|\nu|-2)+1$ by \Cref{lem:fBounds}; and the last step using that since $\psi$ is a $\sig$-embedding, $F[\nu]$ must have minimum degree at least 1. Summing this over the at most $2^{v(F)}v(F)!$ maps inside $\Psi_\sig$ gives the final result. 
\end{proof}

\section{Random Tur\'an Results}\label{sec:randomTuranResults}
\subsection{Using Balanced Supersaturation}

In this section we will use our balanced supersaturation result \Cref{thm:balancedSupersatEdges} together with standard machinery from hypergraph containers to give our main technical upper bounds on $\ex(G_{n,p},F)$. For this we need the following, where here for a hypergraph $\c{H}$ we let $\Del_i(\c{H})$ denote the maximum $i$-degree of $\c{H}$, i.e.\ the maximum number of edges containing a given set of $i$ vertices of $\c{H}$.

\begin{defn}
    Given positive functions $M=M(n),\ \gamma=\gamma(n)$, and $\tau=\tau(n,m)$, we say that a graph $F$ is \emph{$(M,\gamma,\tau)$-balanced} if for every $n$-vertex $m$-edge graph $G$ with $n$ sufficiently large and $m\ge M(n)$, there exists a non-empty collection $\c{H}$ of copies of $F$ in $G$ (which we view as an $e(F)$-uniform hypergraph on $E(H)$) such that, for all integers $1\le i\le e(F)$,
    \begin{equation}\label{equation:Delta}
    \Delta_i(\c{H})\le \frac{\gamma(n)|\c{H}|}{m}\l(\frac{\tau(n,m)}{m}\r)^{i-1}.     
    \end{equation}
\end{defn}

In particular, a simple translation of \Cref{thm:balancedSupersatEdges} yields the following.

\begin{cor}\label{cor:tauFormulation}
    If $F$ is $r$-semi-balanced, then there exists some $C>0$ such that $F$ is $(Cn^{2-1/r},C,\tau)$-balanced where $\tau(n,qn^2):=1$ for $q>1$, and otherwise
    \[\tau(n,qn^2):=qn^2 \max_{\nu\sub V(F): \del(F[\nu])\ge 1,\ e(\nu)\ge 2}[q^{1-f(\nu)}n^{2-|\nu|}]^{1/(e(\nu)-1)}.\]
\end{cor}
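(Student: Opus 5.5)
The corollary is a direct translation of \Cref{thm:balancedSupersatEdges} into the $(M,\gamma,\tau)$-balanced language, with $m=qn^2$. Fix the constant $C_0$ from \Cref{thm:balancedSupersatEdges}. Let $G$ be an $n$-vertex graph with $m=qn^2\ge C_0n^{2-1/r}$ edges, and let $\c{H}$ be the hypergraph it provides. Then
\[|\c{H}|\ge C_0^{-1}q^{e(F)}n^{v(F)}\]
(in particular $\c{H}$ is non-empty), and for $1\le i\le e(F)$,
\[\Del_i(\c{H})\le C_0q^{e(F)-1}n^{v(F)-2}\Lambda^{i-1},\qquad \Lambda:=\max_{\nu}[q^{1-f(\nu)}n^{2-|\nu|}]^{1/(e(\nu)-1)}.\]
Note that $q\le 1/2<1$ automatically, since $G$ is a simple graph, so the $q>1$ clause of $\tau$ is vacuous; it is only a convention.

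We compare this with the required bound $\frac{\gamma|\c{H}|}{m}(\tau/m)^{i-1}$. By definition of $\tau$ we have $\tau/m=\Lambda$, so the factor $\Lambda^{i-1}$ matches exactly. For the prefactor we need
\[C_0q^{e(F)-1}n^{v(F)-2}\le \gamma\frac{|\c{H}|}{qn^2}.\]
Using the lower bound on $|\c{H}|$, the right side is at least $\gamma C_0^{-1}q^{e(F)-1}n^{v(F)-2}$. So it suffices to take $\gamma=C_0^2$.

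Set $C=\max\{C_0,C_0^2\}$. Since enlarging $M$ only weakens the hypothesis on $G$, $F$ is $(Cn^{2-1/r},C,\tau)$-balanced. The only subtlety is that the maximum over $\nu$ with $\del(F[\nu])\ge1$ and $e(\nu)\ge2$ is over a non-empty set, so that $\tau$ is well defined. This holds whenever $e(F)\ge 2$, taking $\nu$ to be a path of length two. If $e(F)=1$, only $i=1$ occurs and $\tau$ is irrelevant. There is no real obstacle here; it is bookkeeping.
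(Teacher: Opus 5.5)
Your proposal is correct and is exactly the ``simple translation'' of \Cref{thm:balancedSupersatEdges} that the paper has in mind. You set $m=qn^2$, note that $\tau/m$ is precisely the maximum appearing there, and absorb the lower bound $|\c{H}|\ge C_0^{-1}q^{e(F)}n^{v(F)}$ into $\gamma$ by taking $C=\max\{C_0,C_0^2\}$. The paper gives no further detail beyond also remarking that the $q>1$ clause is vacuous.
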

We note that the $q>1$ case is vacuously true since no graph $G$ has more than $n^2$ edges and this condition is needed only for minor technical reasons.  The motivation for this definition of balacedness comes from the following general result, the proof of which follows from a standard application of hypergraph containers.

\begin{prop}[\cite{nie2024random} Proposition 2.6 ]\label{Lemma:General Random Turan}
Let $F$ be a graph.  If there exists a  $C>0$ and positive functions $M=M(n)$ and $\tau=\tau(n,m)$ such that 
\begin{itemize}
    \item[(a)] $F$ is $(M,(\log n)^{C}, \tau)$-balanced, and
    \item[(b)] For all sufficiently large $n$ and $m\ge M(n)$, the function $\tau(n,m)$ is non-increasing with respect to $m$ and satisfies $\tau(n,m)\ge 1$, 
\end{itemize}
then there exists $C'\ge 0$ such that for all sufficiently large $n$, $m\ge M(n)$, and $0<p\le 1$ with $pm\rightarrow\infty$ as $n\rightarrow \infty$, we have a.a.s.
$$
\ex(G_{n,p},F)\le \max\l\{C'pm,\tau(n,m)(\log n)^{C'}\r\}.
$$
\end{prop}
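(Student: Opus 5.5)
We will derive this from the hypergraph container method in its standard form; it is quoted from \cite{nie2024random}, so we only outline the argument. The plan is the usual Morris--Saxton scheme. Build containers iteratively, then bound the probability that a small container-sized set of edges of $G_{n,p}$ is $F$-free.

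\textbf{Step 1: one container step.} Fix $n$ large and take a graph $G$ on $n$ vertices with $m'\ge M(n)$ edges. By (a) there is an $e(F)$-uniform hypergraph $\c{H}$ on $E(G)$ satisfying \eqref{equation:Delta} with $\gamma=(\log n)^C$ and $\tau=\tau(n,m')$. We apply the hypergraph container lemma of Balogh--Morris--Samotij / Saxton--Thomason with codegree parameter $\tau(n,m')/m'$. It produces a family of fingerprints $S\sub E(G)$ with $|S|\le (\log n)^{O(1)}\tau(n,m')$, and a map to containers $C(S)\supseteq S$. Every $F$-free subgraph $I\sub G$ satisfies $S\sub I\sub C(S)$ for some fingerprint $S=S(I)\subseteq I$. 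Each container has $e(C(S))\le (1-c(\log n)^{-C})m'$, since each container has few hyperedges and the supersaturation guaranteed by (a) for $C(S)$ forces it to be smaller.

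\textbf{Step 2: iterate.} Start from $K_n$ and iterate Step 1 on each container until every container has at most $m$ edges. Each container has at least $M(n)$ edges until the process stops, so (a) applies at every stage. Because $\tau$ is non-increasing in $m$ (hypothesis (b)), every fingerprint piece has size at most $(\log n)^{O(1)}\tau(n,m)$. There are $(\log n)^{O(1)}$ rounds, so the total fingerprint has size $s\le \tau(n,m)(\log n)^{C_1}$. The number of containers is then at most $\binom{n^2}{\le s}\le \exp(s\log n)$. The condition $\tau\ge 1$ keeps these estimates meaningful.

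\textbf{Step 3: union bound.} Put $k=\max\{C'pm,\tau(n,m)(\log n)^{C'}\}$. Suppose $G_{n,p}$ has an $F$-free subgraph $I$ with $k$ edges. Then for some fingerprint $S$ with container $C(S)$, we have $S\sub E(G_{n,p})$ and $|E(G_{n,p})\cap C(S)|\ge k$. Fix $S$, of size $s_0$. The probability of this event is at most
\[p^{s_0}\Pr\bigl[\mathrm{Bin}(m,p)\ge k-s_0\bigr]\le p^{s_0}e^{-k}.\]
The Chernoff bound applies here because $k\ge C'pm$ with $C'$ large and $pm\to\infty$. Now sum over the at most $\binom{n^2}{s_0}$ fingerprints of each size $s_0\le s$. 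The total is at most
\[\sum_{s_0\le s}(en^2p/s_0)^{s_0}e^{-k}\le e^{s\log n\cdot O(1)-k}\to 0,\]
because $k\ge \tau(\log n)^{C'}\gg s\log n$ once $C'>C_1+1$. This holds for each fixed $m$. Uniformity over $m$ and $p$ follows by a union bound over the polynomially many relevant $m$, since the failure probability is superpolynomially small when $k\ge(\log n)^{C'}$.

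\textbf{Main obstacle.} The only delicate point is Step 1: checking that the balancedness condition in the form \eqref{equation:Delta} is exactly the input the container lemma needs, and that the resulting container shrinkage factor, of order $(\log n)^{-C}$, gives only polylogarithmically many iterations. This bookkeeping is standard, and it is carried out in \cite{nie2024random}, Proposition 2.6, which we invoke directly.
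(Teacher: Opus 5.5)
Your outline is correct and is the same standard iterated-container argument the paper relies on (it quotes the result from \cite{nie2024random} and sketches this iteration at the start of its appendix). The ingredients are polylogarithmically many rounds, a total fingerprint of size $O(\tau(n,m)(\log n)^{C+1})$ by monotonicity of $\tau$, and a union bound over $\exp(O(s\log n))$ containers with at most $m$ edges each. Only small fixes are needed: the factor $p^{s_0}$ is unnecessary since you discard it anyway, the containers are really indexed by sequences of fingerprints rather than their union (the count is still $\exp(O(s\log n))$), and the shrinkage $e(C)\le(1-\delta(\log n)^{-C})m'$ comes from the bound $\Delta_1(\mathcal{H})\le(\log n)^C|\mathcal{H}|/m'$ for the family on $G$, as packaged in the container lemma, not from applying (a) to $C(S)$, which only guarantees some non-empty family there.
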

This result is \textit{almost} enough to prove our results, except that it gives bounds which are off\footnote{For experts in containers: this is essentially because the argument of \Cref{Lemma:General Random Turan} does not utilize fingerprints.  In turn, we prove \Cref{Lemma:General Random Turan for Large p} using fingerprints.} by some logarithmic factors for $p \ge (\log n)^{-O(1)}$.  To get around this we need one more result.

\begin{prop}\label{Lemma:General Random Turan for Large p}
Let $F$ be a graph.  If there exists $\alpha, \beta, C>0$ and positive functions $q_0 = q_0(n)$ and $\tau=\tau(n,qn^2)$ such that 
\begin{itemize}
    \item[(a)] $F$ is $(q_0(n)n^2,C , \tau)$-balanced, and
    \item[(b)] For all sufficiently large $n$ and $q\ge q_0(n)$, we have $\tau(n,qn^2) = \max \{n^{2 - \alpha}, q^{1 - \beta}n \},$
\end{itemize}
then there exists $C'\ge 0$ such that for all sufficiently large $n$, $q\ge q_0(n)$, and $0<p\le 1$ with $pqn^2\rightarrow\infty$ as $n\rightarrow \infty$, we have a.a.s.
$$
\ex(G_{n,p},F)\le \max\l\{C'pqn^2,\ n^{2- \alpha}(\log n)^{C'},\ q^{1- \beta}n,\ p^{1 - \frac{1}{\beta}}n^{2 - \frac{1}{\beta}}\r\}.
$$
\end{prop}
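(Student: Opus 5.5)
The plan is to run the hypergraph container method in its ``fingerprint'' form, iterated over edge-density scales. The point is that the union bound over fingerprints then costs only $\binom{n^2}{s}p^{s}\le (epn^2/s)^s$ rather than $n^{2s}$. It is this saving that removes the spurious logarithmic factors of \Cref{Lemma:General Random Turan} once $p\ge (\log n)^{-O(1)}$.

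\textbf{Step 1 (one container step with fingerprints).} By the balancedness hypothesis (a), every $n$-vertex graph $G$ with $m=q'n^2\ge q_0n^2$ edges carries an $e(F)$-uniform hypergraph $\c{H}$ of copies of $F$ satisfying \eqref{equation:Delta} with $\gamma=C$ and $\tau=\tau(n,m)$. Feeding this into the standard container lemma (Saxton--Thomason / Balogh--Morris--Samotij), in the version that outputs fingerprints, gives the following. For every $F$-free $I\sub G$ there is a fingerprint $S\sub I$ with $|S|\le c_1\tau(n,m)$, together with a container $C(S)\supseteq I$ depending only on $S$, with $e(C(S))\le (1-c_2)m$. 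The constants $c_1,c_2$ depend only on $F$ and $C$; the point is that no $\log n$ loss occurs because $\gamma=C$ is a constant.

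\textbf{Step 2 (iteration).} I would apply Step 1 repeatedly, starting from $K_n$, until the containers have at most $q^*n^2$ edges, for a target density $q^*\ge q_0$ chosen below. This takes $O(\log(1/q^*))$ rounds. At round $i$ the density is $q_i\approx(1-c_2)^i$ and the fingerprint size is at most $c_1\max\{n^{2-\alpha},q_i^{1-\beta}n\}$. Summing over rounds, which is geometric in $q_i^{1-\beta}$ whichever sign $1-\beta$ has, bounds the total fingerprint size by
\[
s\le c_3\max\{n^{2-\alpha}\log n,\ (q^*)^{1-\beta}n\},
\]
where the $\log n$ only accompanies the flat $n^{2-\alpha}$ term. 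The result is that every $F$-free graph $I$ lies in some $C(S)$ with $S\sub I$, $|S|\le s$ and $e(C(S))\le q^*n^2$.

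\textbf{Step 3 (union bound).} If $G_{n,p}$ has an $F$-free subgraph with $k$ edges, then for some fingerprint $S$ we have $S\sub G_{n,p}$ and $e(G_{n,p}\cap (C(S)\sm S))\ge k-s$. These two events are independent, so summing over $S$ gives
\[
\Pr\le \sum_{j\le s}\binom{n^2}{j}p^j\cdot \Pr\!\left[\mathrm{Bin}(q^*n^2,p)\ge k-s\right]\le \exp\!\left(s\log\frac{epn^2}{s}\right)e^{-k/2}
\]
for $k\ge C'pq^*n^2$, by Chernoff.

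Now choose $q^*=\max\{q,\ K(pn)^{-1/\beta}\}$ with $K$ a large constant. This choice makes $(q^*)^{1-\beta}n\le K^{-\beta}\,pq^*n^2$, and $pq^*n^2$ is at least $p^{1-1/\beta}n^{2-1/\beta}$ up to a constant. Take $k$ to be a large multiple of the maximum in the statement; the $q^{1-\beta}n$ term there covers the case $q^*=q$. Then the entropy term $s\log(epn^2/s)$ is dominated by $k/4$. When $s$ is governed by $(q^*)^{1-\beta}n$, we have $pn^2/s\approx K^{\beta}/q^*$, so $\log(epn^2/s)=O(\log(K/q^*))$; since $s\le K^{-\beta}pq^*n^2$, this is absorbed once $K$ is large. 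When instead $s$ is governed by $n^{2-\alpha}\log n$, we take $k\ge n^{2-\alpha}(\log n)^{C'}$, and $\log(epn^2/s)\le 2\log n$ is absorbed by $C'$. Finally, $pqn^2\to\infty$ makes the failure probability $o(1)$.

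\textbf{Main obstacle.} I expect the main difficulty to be the bookkeeping in Step 3 when $q^*$ is small. There the factor $\log(1/q^*)$ coming from $\log(epn^2/s)$, together with the $O(\log(1/q^*))$ iteration rounds, must be absorbed without producing a $\log n$. My first attempt would be to note that whenever $q^*\le (\log n)^{-C''}$, the $(q^*)^{1-\beta}n$ term is harmless compared with $n^{2-\alpha}(\log n)^{C'}$ or with $C'pqn^2$, so the constant-only analysis is needed only when $q^*\ge(\log n)^{-O(1)}$. In that regime the logarithms are $O(\log\log n)$, and I would absorb them by letting $K$ be a slowly adjusted constant, or by splitting the rounds into dyadic blocks. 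This regime is exactly the one where \Cref{Lemma:General Random Turan} lost its log factors.
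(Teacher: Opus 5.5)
Your Step 3 has a genuine gap, and the remedy in your ``Main obstacle'' paragraph does not work.

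The gap is in how you count fingerprints. You count them as arbitrary edge sets of $K_n$, which costs $s\log(epn^2/s)$ in the exponent. With $s\approx (q^*)^{1-\beta}n$ and $q^*\approx K(pn)^{-1/\beta}$, this is $s\log\bigl(eK^\beta/(c_3q^*)\bigr)$. You claim this is absorbed for large constant $K$ because $s\le K^{-\beta}pq^*n^2$. That requires $K^{-\beta}\log(1/q^*)=O(1)$, which forces $K$ to grow with $n$.

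Now look at the regime where the proposition is actually used. In the proof of \Cref{thm:alpha} we have $\beta=r$ and $q=C'p^{-1/r}n^{-1/r}$. So $q^*$ is polynomially small for every $p$, including constant $p$, and $\log(1/q^*)=\Theta(\log n)$. Nothing absorbs this factor: in that regime $(q^*)^{1-\beta}n$, $pq^*n^2$ and $p^{1-1/\beta}n^{2-1/\beta}$ are all of the same order, and all are polynomially larger than $n^{2-\alpha}(\log n)^{C'}$. Hence your claim that the $(q^*)^{1-\beta}n$ term is harmless once $q^*\le(\log n)^{-C''}$ is false. The regime $q^*\ge(\log n)^{-O(1)}$ that you reduce to never arises in the application. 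In fact, if you optimize $q^*$ against the Chernoff bound under your count, the best you get is $(\log n)^{1/\beta}p^{1-1/\beta}n^{2-1/\beta}$.

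The missing idea is to count each round's fingerprint inside the container it was extracted from, not inside $K_n$. If the current container has $q'n^2$ edges and the fingerprint has size $s'\le q'^{1-\beta}n$, then $q'\le (s'/n)^{-1/(\beta-1)}$. This gives
\[
\binom{q'n^2}{s'}p^{s'}\le\Bigl(\frac{C\,p^{1-1/\beta}n^{2-1/\beta}}{s'}\Bigr)^{\frac{\beta}{\beta-1}s'},
\]
which has no dependence on $q'$, so the $\log(1/q')$ disappears. In the flat rounds, where only $s'\le n^{2-\alpha}$ is known, each round costs at most $n^{2s'}\le\exp(O(n^{2-\alpha}\log n))$.

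This count also forces you to take the union bound over ordered sequences of fingerprints. The final container depends on the whole sequence, not just on the union $S$ as your Step 2 suggests. Along a chain, the non-flat fingerprint sizes decay geometrically, so \Cref{NetoMorris} combines the rounds at an extra cost of only $\exp\bigl(O(q^{1-\beta}n+(\log n)^2n^{2-\alpha})\bigr)$. Summing the displayed bound over the total size $t$ then gives entropy $\exp\bigl(O(p^{1-1/\beta}n^{2-1/\beta})\bigr)$, which the choice of the threshold absorbs. This is exactly the bookkeeping behind property (iv') in the paper's appendix.
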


The proof of \Cref{Lemma:General Random Turan} is a straightforward generalization of arguments used by Morris and Saxton \cite{morris2016number}, and as such we defer its proof to an arXiv only appendix.  To apply these results, we  need some facts about the $\tau$ function given by our supersaturation result.

\begin{lem}\label{lem:tauProperties}
    Let $F$ be an $r$-semi-bounded graph which contains a cycle, and for real numbers $q,n$ define $\tau(n,qn^2)=1$ for $q>1$ and otherwise
    \[\tau(n,qn^2)=qn^2 \max_{\nu\sub V(F): \del(F[\nu])\ge 1,\ e(\nu)\ge 2}[q^{1-f(\nu)}n^{2-|\nu|}]^{1/(e(\nu)-1)}.\]
    Then the following hold:
    \begin{itemize}
        \item[(a)] $\tau(n,qn^2)$ is non-increasing with respect to $q$ and has $\tau(n,qn^2)\ge 1$ for all $q$.
        \item[(b)] We have $\tau(n,qn^2)\le q^{1-r}n$ for all $n^{-1/r}\le q\le n^{\al(F)/r-1/r}$.
        \item[(c)] We have $\tau(n,qn^2)\le n^{2-1/m_2(F)}$ for all $q\ge n^{-\be(F)}$.
    \end{itemize}
\end{lem}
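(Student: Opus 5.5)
For each $\nu$ with $\del(F[\nu])\ge 1$ and $e(\nu)\ge 2$, I would write $g_\nu(q)=qn^2\,[q^{1-f(\nu)}n^{2-|\nu|}]^{1/(e(\nu)-1)}$. Then $\tau(n,qn^2)=\max_\nu g_\nu(q)$ for $q\le 1$, and every part of the lemma reduces to a statement about a single $g_\nu$, since a maximum of finitely many terms inherits the bound from each one. In logarithms, $g_\nu(q)=q^{(e(\nu)-f(\nu))/(e(\nu)-1)}\,n^{(2e(\nu)-|\nu|)/(e(\nu)-1)}$.

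\emph{Part (a).} The $q$-exponent $(e(\nu)-f(\nu))/(e(\nu)-1)$ is at most $0$ by \Cref{lem:fvse}, so each $g_\nu$ is non-increasing in $q$, and hence so is $\tau$ on $q\le 1$. For $q\ge 1$ the definition gives $\tau=1$. To handle the junction and the lower bound, I would check that $\tau(n,n^2)\ge 1$. Take $\nu=V(F)$ when $F$ has no isolated vertices, or more generally any $\nu$ containing a cycle. With $q=1$ we get $g_\nu=n^{(2e(\nu)-|\nu|)/(e(\nu)-1)}\ge 1$ whenever $e(\nu)\ge |\nu|/2$, which holds for every $\nu$ with minimum degree at least $1$. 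So all $g_\nu(1)\ge 1$, and by monotonicity $\tau\ge 1$ for all $q\le 1$. This also shows $\tau$ does not jump upward at $q=1$.

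\emph{Part (b).} I need $g_\nu(q)\le q^{1-r}n$ for every relevant $\nu$ when $n^{-1/r}\le q\le n^{(\al(F)-1)/r}$. The idea is to compare exponents. Write $e=e(\nu)$, $f=f(\nu)$, $k=|\nu|$, and set $q=n^{-x}$ with $x\in[(1-\al(F))/r,\,1/r]$. After multiplying through by $e-1>0$, the inequality becomes
\[
x\bigl(f-e-(r-1)(e-1)\bigr)\le (e-1)-(2e-k),
\]
that is,
\[
x\bigl(f-1-r(e-1)\bigr)\le k-e-1.
\]
Both sides are linear in $x$, so it suffices to check the two endpoints. I would split into two cases.

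If $\nu\notin A_F$, then $f-1-r(e-1)\ge 0$. The left side is then largest at $x=1/r$, where the inequality reads $f-1\le r(k-2)+1$. This holds by \Cref{lem:fBounds}.

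If $\nu\in A_F$, the coefficient is negative, so the left side is largest at $x=(1-\al(F))/r$. Using $\al(F)\le \al(\nu)$ and unpacking the definition $\al(\nu)=\frac{r(k-2)+1-f}{r(e-1)+1-f}$, the inequality becomes an identity at $\al(\nu)$ and a valid inequality for any smaller $\al$. The factor $r$ is chosen to make this work out, and I expect this to be the main bookkeeping step. I would double-check the sign conventions there carefully.

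\emph{Part (c).} Similarly, for $q=n^{-x}$ with $x\le \be(F)$, I need $g_\nu\le n^{2-1/m_2(F)}$, i.e.
\[
x(f-e)+2e-k\le (e-1)(2-1/m_2(F)),
\]
which rearranges to
\[
x(f-e)\le k-2-(e-1)/m_2(F).
\]
The right side is nonnegative by the definition of $m_2$, since $F[\nu]$ has $|\nu|\ge 3$ when $e\ge 2$. If $f=e$, the inequality is immediate. Otherwise $\nu\in B_F$, and the condition is exactly $x\le\be(\nu)$, which follows from $x\le\be(F)\le\be(\nu)$. Finally, for $q>1$ we have $\tau=1$, so the bound is trivial in that range.
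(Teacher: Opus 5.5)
Your proposal is correct and is essentially the paper's proof. In (a) you use termwise monotonicity via \Cref{lem:fvse}, checking that every term is at least $1$ at $q=1$ through $2e(\nu)\ge|\nu|$, where the paper simply exhibits $\nu=S\cup\{v^*\}$. In (b) you make the same sign split on $r(e(\nu)-1)+1-f(\nu)$, using \Cref{lem:fBounds} and $\al(F)\le\al(\nu)$, and in (c) the same split into $f(\nu)=e(\nu)$ versus $\nu\in B_F$.

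One small slip: at $x=1/r$ the required inequality is $f-1\le r(k-2)$, not $f-1\le r(k-2)+1$. This is still exactly what \Cref{lem:fBounds} gives, since $S\cap\nu\ne\emptyset$. Also note that the range in (b) satisfies $q\le 1$ by \Cref{lem:alBound}, so the formula $\tau=\max_\nu g_\nu$ applies there.
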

Note for (b) that $n^{\al(F)/r-1/r}\le 1$ due to \Cref{lem:alBound}.

\begin{proof}
    For (a), to show $\tau$ is non-increasing for $q\le 1$ it suffices to show that $qn^2[q^{1-f(\nu)}n^{2-|\nu|}]^{1/(e(\nu)-1)}$ is non-increasing in $q$ for all choices of $\nu$.  To prove that this is non-increasing, we observe that since $f(\nu)\ge e(\nu)$ by \Cref{lem:fvse}, the exponent for $q$ in this expression is at most 0, and hence the expression is non-increasing in $q$.  To show $\tau(n,qn^2)$ is non-increasing for $q\ge 1$, it suffices to show $\tau(n,n^2)\ge 1$, i.e.\ that at least one term in the maximum defining $\tau$ is at least 1 at $q=1$.    And indeed, taking $\nu=S\cup \{v^*\}$ shows
    \[\tau(n,n^2)\ge n^{2-\frac{2-|S\cup \{v^*\}|}{e(S\cup \{v^*\})-1}}=n \ge 1,\]
    where implicitly the first inequality uses that the set $\nu=S\cup \{v^*\}$ satisfies $e(\nu)\ge 2$, which must hold if $F$ contains a cycle.  This implies that $\tau$ is non-increasing in $qn^2$, and since $\tau(n,qn^2)=1$ for $q\ge 1$ this implies that $\tau(n,qn^2)\ge 1$ for all $q$.

    For (b), we note that having $\tau(n,qn^2)\le q^{1-r}n$ for $q\le 1$ is equivalent to saying that for all $\nu\sub V(F)$ with $\del(F[\nu])\ge 1$ and $e(\nu)\ge 2$ we have
    \[qn^2[q^{1-f(\nu)}n^{2-|\nu|}]^{1/(e(\nu)-1)}\le q^{1-r}n,\]
    which after raising both sides to the power of $e(\nu)-1$ and rearranging can be seen to be equivalent to proving
    \begin{equation}q^{r(e(\nu)-1)+1-f(\nu)}\le n^{|\nu|-2-(e(\nu)-1)}.\label{eq:alphaTau}\end{equation}
    First consider the subcase that $r(e(\nu)-1)+1-f(\nu)\le 0$.  In this setting, \eqref{eq:alphaTau} is hardest to prove for the smallest possible value of $q\ge n^{-1/r}$.  At this value of $q=n^{-1/r}$, \eqref{eq:alphaTau} reduces to showing
    \[0\le r(e(\nu)-1)+1-f(\nu)+r(|\nu|-2)-r(e(\nu)-1)=1-f(\nu)+r(|\nu|-2),\]
    and this inequality holds by \Cref{lem:fBounds}, proving \eqref{eq:alphaTau} when $r(e(\nu)-1)+1-f(\nu)\le 0$.

    Now consider the subcase that $r(e(\nu)-1)+1-f(\nu)> 0$, i.e. that $f(\nu)-1<r(e(\nu)-1)$.  Since we are only considering $\nu$ with $\del(F[\nu])\ge 1$, we see that every $\nu$ which we are considering lies in $A_F$.  As such, to prove \eqref{eq:alphaTau} in the case $r(e(\nu)-1)+1-f(\nu)>0$, it suffices to have
    \[q\le \min_{\nu\in A_F} n^{\frac{|\nu|-2-(e(\nu)-1)}{r(e(\nu)-1)+1-f(\nu)}}=\min_{\nu\in A_F} n^{\al(\nu)/r-1/r}=n^{\al(F)/r-1/r},\]
    where this first equality used that $\al(\nu)-1=\frac{r(|\nu|-2)-r(e(\nu)-1)}{r(e(\nu)-1)+1-f(\nu)}$ and the last equality used the definition of $\al(F)$.  This bound holds by our assumption on $q$ in this case, proving the result.

    Using similar logic for (c), we see that having $\tau(n,qn^2)\le n^{2-1/m_2(F)}$ is equivalent to having for all $\nu\sub V(F)$ with $\del(F[\nu])\ge 1$ and $e(\nu)\ge 2$ that
    \begin{equation}q^{e(\nu)-f(\nu)}\le n^{|\nu|-2-(e(\nu)-1)/m_2(F)}.\label{eq:betaTau}\end{equation}
    If $\nu$ is such that $f(\nu)=e(\nu)$ then \eqref{eq:betaTau} is satisfied precisely if $|\nu|-2\ge (e(\nu)-1)/m_2(F)$, which holds by definition of $m_2(F)$.  Otherwise in view of \Cref{lem:fvse}, we must have $f(\nu)>e(\nu)$ and hence $\nu\in B_F$.  In this case, showing \eqref{eq:betaTau} for all such $\nu$ is equivalent to showing
    \[q\ge \max_{\nu\in B_F} n^{\frac{|\nu|-2-(e(\nu)-1)/m_2(F)}{e(\nu)-f(\nu)}}=\max_{\nu\in B_F} n^{-\be(\nu)}=n^{-b(F)},\]
    which holds by assumption of the case that we are in.
\end{proof}

We now use these results to prove our main technical upper bounds on $\ex(G_{n,p},F)$, starting with our result for $\al(F)$.

\begin{proof}[Proof of \Cref{thm:alpha}]
    Recall that we wish to show that if $F$ is $r$-semi-bounded and contains a cycle, then there exists $C>0$ such that for all $n^{-\al(F)}\log^C(n)\le p\le 1$ for some large constant $C$ to be determined later, then a.a.s.\ we have
    \[\ex(G_{n,p},F)=O(p^{1-1/r}n^{2-1/r}).\]
    We note for later that $F$ containing a cycle implies $r\ge 2$.

    Letting $\tau$ be as in \Cref{cor:tauFormulation}, we have from this result that $F$ is $(C'n^{2-1/r},C',\tau)$-balanced for some constant $C'$.  Now define the function \[\tau'(n,qn^2)=\max\{n^{2-1/r-(r-1)\al(F)/r},q^{1-r}n\},\]
    which equivalently has $\tau'(n,qn^2)= q^{1-r}n$ for $q\le n^{\al(F)/r-1/r}$ and $\tau'(n,qn^2)=n^{2-1/r-(r-1)\al(F)/r}$ otherwise.  By \Cref{lem:tauProperties}(a) and (b), we have $\tau(n,qn^2)\le \tau'(n,qn^2)$ for all $q$.  As such, $F$ is also $(C' n^{2-1/r},C',\tau')$-balanced.    We can thus apply \Cref{Lemma:General Random Turan for Large p} with $q_0=C' n^{-1/r}$ to conclude that there exists some $C''>0$ such that a.a.s.\ for all $q\ge C' n^{-1/r}$ and $p$ with $pqn^2\to \infty$ we have
    \[\ex(G_{n,p},F)\le \max\{C'' pqn^2,\ n^{2-1/r-(r-1)\al(F)/r}(\log n)^{C''},\ q^{1-r}n,\ p^{1-1/r}n^{2-1/r}\}.\]
    We aim to apply this bound with $q=C'p^{-1/r}n^{-1/r}$ in the range $p\ge n^{-\al(F)}\log^C(n)$.  Note that for these parameters we have $q\ge q_0=C' n^{-1/r}$ and that $p q n^2\ge p n^{3/2}\to \infty$ since $r\ge 2$ and $p\ge n^{-\al(F)}\ge n^{-1}$ by \Cref{lem:alBound}, so this bound is indeed valid for these choice of parameters.  Plugging in these values gives
    \[\ex(G_{n,p},F)\le \max\{ C'C'' p^{1-1/r}n^{2-1/r},n^{2-1/r-(r-1)\al(F)/r}(\log n)^{C''},(C')^{1-r} p^{1-1/r} n^{2-1/r},p^{1-1/r}n^{2-1/r}\}.\]
    This gives the desired upper bound whenever $p^{1-1/r} n^{2-1/r}\ge n^{2-1/r-(r-1)\al(F)/r}(\log n)^{C''}$, and this precisely holds whenever $p\ge n^{-\al(F)}(\log n)^C$ for some large constant $C$, proving the result.
\end{proof}

For our result on $\be(F)$, we will need the following basic fact.

\begin{lem}\label{lem:m2}
    Let $F$ be a graph with $e(F)\ge 2$.  If $F$ has a cycle then $m_2(F)>1$.  If $F$ is a forest then $m_2(F)\le 1$ with equality whenever $F$ is not a subgraph of a matching.
\end{lem}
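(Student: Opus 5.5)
The plan is to bound $m_2(F)$ directly from its definition
\[m_2(F)=\max_{F'\sub F,\ v(F')\ge 3}\frac{e(F')-1}{v(F')-2},\]
treating the cycle case and the forest case separately.

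\textbf{Cycle case.} Suppose $F$ contains a cycle $C_k$ with $k\ge 3$. Take $F'=C_k$; then $v(F')=k\ge 3$ and
\[m_2(F)\ge \frac{k-1}{k-2}>1.\]

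\textbf{Forest, upper bound.} Suppose $F$ is a forest. Every subgraph $F'$ with $v(F')\ge 3$ is also a forest, so $e(F')\le v(F')-1$. Hence
\[\frac{e(F')-1}{v(F')-2}\le \frac{v(F')-2}{v(F')-2}=1,\]
and so $m_2(F)\le 1$. Subgraphs with zero or one edge give a nonpositive ratio, which does no harm to the maximum.

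\textbf{Forest, equality.} If $F$ is not a subgraph of a matching, then some vertex has degree at least $2$, so $F$ contains a path $P_3$ with two edges. Taking $F'=P_3$ gives ratio $\frac{2-1}{3-2}=1$. Combined with the upper bound, $m_2(F)=1$.

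The hypothesis $e(F)\ge 2$ guarantees $v(F)\ge 3$, so the maximum in the definition of $m_2(F)$ is over a nonempty set and $m_2(F)$ is well-defined. I expect no real obstacle here; the only care needed is checking that degenerate subgraphs (those with at most one edge) cannot increase the maximum.
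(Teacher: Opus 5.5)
Your proof is correct and matches the paper's argument: a cycle witnesses a ratio strictly above $1$, every subgraph of a forest satisfies $e\le v-1$ and so has ratio at most $1$, and a two-edge path gives equality. The only cosmetic difference is that the paper works with vertex sets $\nu$ (the vertex set of a cycle induces at least $|\nu|$ edges) rather than with the cycle itself as a subgraph.
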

In fact we only need the weaker bound $m_2(F)>1/2$ when $F$ has a cycle to prove \Cref{thm:beta}, though some of our later results will require the full statement of this lemma.
\begin{proof}
    If $\nu\sub V(F)$ is the vertex set of a cycle of $F$ then $e(\nu)\ge |\nu|$, and hence
    \[m_2(F)\ge \frac{e(\nu)-1}{|\nu|-2}>1.\]
    On the other hand, if $F$ does not contain a cycle then every $\nu\sub V(F)$ satisfies $e(\nu)\le |\nu|-1$, showing $m_2(F)\le 1$.  If $F$ is not a subgraph of matching, then there exists some $\nu\sub V(F)$ inducing a path of length 2 which shows $m_2(F)\ge m_2(\nu)=1$, proving the result.
\end{proof}

We now mimic our argument for \Cref{thm:alpha} to prove our corresponding result for $\be(F)$, though in this case it will be somewhat more convenient to work with \Cref{Lemma:General Random Turan} over \Cref{Lemma:General Random Turan for Large p}.

\begin{proof}[Proof of \Cref{thm:beta}]
    We aim to show that if $F$ is $r$-semi-bounded and contains a cycle, then there exists $C>0$ such that for all $n^{-1/m_2(F)}\le p\le \min\{n^{\be(F)-1/m_2(F)},n^{1/r-1/m_2(F)}$\} we have a.a.s.
    \[\ex(G_{n,p},F)\le n^{2-1/m_2(F)}(\log n)^C.\]
    
    By \Cref{cor:tauFormulation}, we have that $F$ is $(C'n^{2-1/r},C',\tau)$-balanced for some $C'>0$ and $\tau$ as in \Cref{cor:tauFormulation}.   By \Cref{lem:tauProperties}(a), we can apply \Cref{Lemma:General Random Turan} to $F$ to conclude that there exists some $C''$ such that for any $q$ with    $qn^2\ge M(n)=C' n^{2-1/r}$ and any $p$ with $pqn^2\to \infty$, we have a.a.s.
    \begin{equation*}\ex(G_{n,p},F)\le \max\{C'' p qn^2,\tau(n,qn^2)(\log n)^{C''}\}\label{eq:betaUpper}\end{equation*}
   We will only apply this result at $q=C'p^{-1} n^{-1/m_2(F)}$, which does indeed satisfy $qn^2\ge C' n^{2-1/r}$ since $p\le n^{1/r-1/m_2(F)}$, and also that $pqn^2=C'n^{2-1/m_2(F)}\to \infty$ by \Cref{lem:m2}.  Because $ p\le n^{\be(F)-1/m_2(F)}$ we have $q\ge C' n^{-\be(F)}$, so by \Cref{lem:tauProperties}(c), the inequality above applied with $q= C' p^{-1} n^{-1/m_2(F)}$ gives
   \[\ex(G_{n,p},F)\le \max\{C'C'' n^{2-1/m_2(F)},n^{2-1/m_2(F)}(\log n)^{C''}\},\]
   proving the result.
\end{proof}

\subsection{Standard Random Tur\'an Tools}
The results in the previous subsection will be enough to establish our upper bounds for $\ex(G_{n,p},F)$ for large values of $p$.  Here we collect some known results to handle the remaining bounds, the first of which is the following.

\begin{prop}[\cite{nie2024random} Proposition 2.5]\label{prop:nieSpiroEasyRanges}
    Let $F$ be a graph with $\Delta(F)\ge 2$.  If $n^{-2}\ll p \ll n^{-\frac{1}{m_2(F)}}$, then a.a.s.
    \[\ex(G_{n,p},F)=(1+o(1))p {n\choose 2}.\]
    If $p\gg  n^{-\frac{1}{m_2(F)}}$, then a.a.s.
    \[\ex(G_{n,p},F)\ge  n^{2-\frac{1}{m_2(F)}}(\log n)^{-1}.\]
\end{prop}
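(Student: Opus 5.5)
The proof treats the two ranges of $p$ separately. The sparse range is a first-moment deletion argument, and the lower bound follows from it by monotone coupling.

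\textbf{Sparse range.} Since $\Delta(F)\ge 2$, $F$ contains a path with two edges, so $m_2(F)\ge 1$. Let $F'\sub F$ attain the maximum in the definition of $m_2(F)$, so that $v(F')\ge 3$ and $\frac{e(F')-1}{v(F')-2}=m_2(F)>0$. In particular $e(F')\ge 2$.

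Let $X$ be the number of copies of $F'$ in $G_{n,p}$. Then
\[\mathbb{E}[X]\le n^{v(F')}p^{e(F')}=pn^2\cdot \bigl(n^{v(F')-2}p^{e(F')-1}\bigr)=pn^2\cdot\bigl(n^{1/m_2(F)}p\bigr)^{e(F')-1}.\]
Because $p\ll n^{-1/m_2(F)}$ and $e(F')-1\ge 1$, this gives $\mathbb{E}[X]=o(pn^2)$. By Markov's inequality, a.a.s.\ $X=o(pn^2)$.

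Since $pn^2\to\infty$, Chernoff's bound gives a.a.s.\ $e(G_{n,p})=(1+o(1))p\binom n2$. Now delete one edge from each copy of $F'$. The resulting subgraph is $F'$-free, hence $F$-free, and it still has $(1+o(1))p\binom n2$ edges. The matching upper bound $\ex(G_{n,p},F)\le e(G_{n,p})$ is trivial, which completes this range.

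\textbf{Lower bound for $p\gg n^{-1/m_2(F)}$.} Set $p_0=n^{-1/m_2(F)}(\log n)^{-1/2}$. Then $p_0\ll n^{-1/m_2(F)}$. We also have $p_0\gg n^{-2}$, because $m_2(F)\ge 1$ gives $n^{-1/m_2(F)}\ge n^{-1}$.

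Couple so that $G_{n,p_0}\sub G_{n,p}$, which is possible since $p\ge p_0$ for large $n$. The quantity $\ex(\cdot,F)$ is monotone under taking subgraphs, so applying the first part at $p_0$ gives a.a.s.
\[\ex(G_{n,p},F)\ge \ex(G_{n,p_0},F)=(1+o(1))p_0\binom n2\ge n^{2-1/m_2(F)}(\log n)^{-1}\]
for large $n$.

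\textbf{Main point to check.} The only delicate step is verifying that the maximizer $F'$ satisfies $e(F')\ge 2$. This is what makes the exponent $e(F')-1$ positive, so that $\mathbb{E}[X]$ is genuinely $o(pn^2)$. It follows from $m_2(F)\ge 1>0$, which in turn uses the hypothesis $\Delta(F)\ge 2$.
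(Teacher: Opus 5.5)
Your proof is correct. The paper gives no proof of its own here and simply cites Nie and Spiro. Your argument is the standard one for this fact: a first-moment deletion of the 2-density maximizer $F'$ in the sparse range, where $e(F')\ge 2$ (from $m_2(F)\ge 1$) gives $\mathbb{E}[X]\le pn^2\,(n^{1/m_2(F)}p)^{e(F')-1}=o(pn^2)$, followed by a monotone coupling down to $p_0=n^{-1/m_2(F)}(\log n)^{-1/2}$ for the lower bound.
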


It only remains to establish a lower bound on $\ex(G_{n,p},F)$ for $p$ large, and for this we use a known result lower bounding $\ex(G_{n,p},K_{r,t})$.  A proof for this result was sketched out by Morris and Saxton~\cite{morris2016number}, and this result can be formally derived as a consequence of\footnote{We note that the journal version of \cite[Proposition 2.2]{spiro2024random}  does not include the hypothesis $p\gg n^{-1} \log n$, but this is needed for an implicit application of a Chernoff bound in its proof to go through.} \cite[Proposition 2.2]{spiro2024random} together with some basic properties of $G_{n,p}$. 

\begin{prop}[\cite{morris2016number,spiro2024random}]\label{prop:KrtLower}
    If $\ex(n,K_{r,t})=\Theta(n^{2-1/r})$, then for all $p\gg n^{-1}\log n$ we have a.a.s.\ 
    \[\ex(G_{n,p},K_{r,t})=\Om(p^{1-1/r}n^{2-1/r}).\]
\end{prop}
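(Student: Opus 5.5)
The plan is to prove the bound by a blow-up of an extremal $K_{r,t}$-free graph, with a "local uniqueness" cleaning step that makes any copy of $K_{r,t}$ in the kept graph project injectively.

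\textbf{Setup.} Let $m:=\lfloor pn\rfloor$, which tends to infinity since $p\gg n^{-1}\log n$. By hypothesis there is a $K_{r,t}$-free graph $H$ on vertex set $[m]$ with $e(H)\ge c\,m^{2-1/r}$ for some constant $c>0$. Fix a deterministic equipartition $V(G_{n,p})=V_1\cup\dots\cup V_m$ with $|V_i|=n/m\approx 1/p$. Define $G'\subseteq G_{n,p}$ as follows. An edge $xy$ of $G_{n,p}$ with $x\in V_i$, $y\in V_j$ is kept if and only if
\begin{itemize}
\item $ij\in E(H)$,
\item $y$ is the only $G_{n,p}$-neighbor of $x$ in $V_j$, and
\item $x$ is the only $G_{n,p}$-neighbor of $y$ in $V_i$.
\end{itemize}

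\textbf{$G'$ is $K_{r,t}$-free.} By construction, every vertex of $G'$ has at most one $G'$-neighbor in each part $V_j$. Suppose $G'$ contained a copy of $K_{r,t}$.
\begin{itemize}
\item Two vertices of the same side cannot lie in a common part $V_i$. They have a common neighbor $z$, since $r,t\ge 1$, and $z$ has at most one neighbor in $V_i$.
\item Two adjacent vertices lie in parts $i\neq j$ with $ij\in E(H)$, because $H$ has no loops.
\end{itemize}
So if some vertex of one side shared a part with a vertex of the other side, the two would have a common neighbor-part, which is impossible. Hence the map sending each vertex of the copy to the index of its part is injective. Its image is therefore a copy of $K_{r,t}$ in $H$, a contradiction.

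\textbf{Expected number of kept edges.} Fix $ij\in E(H)$ and a pair $x\in V_i$, $y\in V_j$. The pair is kept with probability
\[p(1-p)^{2(|V_i|-1)}\ge p\,e^{-3}\]
for large $n$, since $|V_i|p\approx 1$. Hence
\[\mathbb{E}[e(G')]\ge e^{-3}p\,(n/m)^2 e(H)=\Omega\big(p^{-1}m^{2-1/r}\big)=\Omega\big(p^{1-1/r}n^{2-1/r}\big).\]

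\textbf{Concentration (the main obstacle).} It remains to show $e(G')\ge \tfrac12\mathbb{E}[e(G')]$ a.a.s. I would do this with a second-moment argument.
\begin{itemize}
\item Each kept-edge indicator $I_{xy}$ depends only on the $G_{n,p}$-edges between $x$ and $V_j$ and between $y$ and $V_i$.
\item Hence $I_{xy}$ and $I_{x'y'}$ are independent unless the two pairs share a vertex, or share a part pair with an endpoint in common structure.
\item In the dependent cases $\mathrm{Cov}\le \mathbb{E}[I_{xy}I_{x'y'}]\le p^2$, or $\le p$ when the pairs coincide.
\end{itemize}
Bounding the number of dependent pairs by roughly $e(H)(n/m)^3$ plus terms coming from adjacent edges of $H$ should give $\mathrm{Var}(e(G'))=o(\mathbb{E}[e(G')]^2)$. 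This uses $\mathbb{E}[e(G')]\to\infty$ quickly, which holds because $pn\to\infty$ and indeed $p\gg n^{-1}\log n$. If the dependency bookkeeping becomes messy, I would fall back on the bounded-differences or Talagrand inequality. Each indicator is determined by edges at distance one, so changing one edge of $G_{n,p}$ changes $e(G')$ by at most $3$; the relevant certificates are small, and the $\log n$ slack in the hypothesis on $p$ is exactly what the tail bound needs.

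Chebyshev's inequality then gives a.a.s. $\ex(G_{n,p},K_{r,t})\ge e(G')=\Omega(p^{1-1/r}n^{2-1/r})$.
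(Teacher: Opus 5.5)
Your argument is correct and is essentially the Morris--Saxton blow-up construction that the paper relies on. The paper gives no proof of its own, citing \cite{morris2016number} and \cite[Proposition 2.2]{spiro2024random}. One small repair is needed: the estimate $(1-p)^{2(|V_i|-1)}\ge e^{-3}$ fails for $p$ close to $1$, but for $p\ge 1/2$ you can instead use monotonicity of $\ex(G_{n,p},K_{r,t})$ in $p$.

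The concentration step you flag as the main obstacle is easy. For $x\in V_i$, $y\in V_j$, the indicator $I_{xy}$ depends only on $G_{n,p}[V_i,V_j]$, so $e(G')$ is a sum of $e(H)$ independent block counts, each in $[0,\lceil n/m\rceil]$. There are no cross-block dependencies. Inside a block every two pairs are dependent, so your $e(H)(n/m)^3$ count should be $e(H)(n/m)^4$, which is harmless given the $p^2$ covariance bound. Hence $\mathrm{Var}(e(G'))\le e(H)\lceil n/m\rceil^2=o(\mathbb{E}[e(G')]^2)$, because $\mathbb{E}[e(G')]=\Omega(e(H)\,n/m)$ and $e(H)\to\infty$. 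Chebyshev then finishes the proof, and this only needs $pn\to\infty$ rather than the $\log n$ slack.
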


\subsection{Proof of Main Results}
\subsubsection{Proof of \Cref{thm:maxDegree}}

Our first main result follows quickly from \Cref{thm:alpha} and \Cref{prop:alBound}.

\begin{proof}[Proof of \Cref{thm:maxDegree}]
    Recall that we wish to show that if $F$ has a bipartition $S\cup T$ and a vertex $v^*\in T$ such that every vertex in $T\sm \{v^*\}$ has degree at most $r$ and every vertex $u\in S$ has $|N_F(u)\cup \{v^*\}|\le \Del$; then there exists some $C$ such that for all $p\ge n^{-\frac{r-1}{r\Del-1}}(\log n)^C$, we have a.a.s.
    \[\ex(G_{n,p},F)=O(p^{1-1/r}n^{2-1/r}),\]
    with this bound moreover being tight whenever $F$ contains a $K_{r,t}$ with $\ex(n,K_{r,t})=\Theta(n^{2-1/r})$.  
    
    The lower bound follows from \Cref{prop:KrtLower} and that $n^{-\frac{r-1}{r\Del-1}}\gg n^{-1} \log n$.  Similarly, the upper bound trivially holds if $F$ is a forest since $p\gg n^{-\frac{r-1}{r\Del-1}}\gg n^{-1}$ implies
    \[\ex(G_{n,p},F)\le \ex(n,F)=O(n)=O(p^{1-1/r}n^{2-1/r}).\]
    It thus remains to prove the result in the case that $F$ contains a cycle.

    Define the graph $F'\supseteq F$ by adding any missing edges between $v^*$ and $S$.  Observe that $F'$ is $r$-semi-bounded and that every vertex in $S$ has degree at most $\Del$ in $F'$ by hypothesis.  The desired bound then follows from \Cref{prop:alBound} and \Cref{thm:alpha}.
\end{proof}

\subsubsection{Proof of \Cref{thm:upperSmallp}}
All our remaining proofs will in part require us to study the 2-density of graphs. To aid with this, given a graph $F$ and a subset $\nu\sub V(F)$ on at least 3 vertices, we define
\[m_2(\nu):=\frac{e(\nu)-1}{|\nu|-2},\]
noting that $m_2(F)$ is by definition the maximum of $m_2(\nu)$ over all $\nu$ with at least 3 vertices.  The following observation will be useful for studying the graphs of interest to us.

\begin{lem}\label{lem:includeComplete}
    Let $F$ be a bipartite graph on $S\cup T$ such that $T$ contains a vertex $v^*\in T$ which is adjacent to every vertex of $S$.  If $F$ contains a cycle, then any $\nu\sub V(F)$ with $|\nu|\ge 3$ and $m_2(\nu)=m_2(F)$ has $v^*\in \nu$.
\end{lem}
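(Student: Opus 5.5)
Suppose for contradiction that $\nu$ has $|\nu|\ge 3$, $m_2(\nu)=m_2(F)$ and $v^*\notin\nu$. The plan is to show that $\nu\cup\{v^*\}$ has strictly larger 2-density, which is impossible.

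Set $\nu'=\nu\cup\{v^*\}$. Then $|\nu'|=|\nu|+1$ and $e(\nu')=e(\nu)+|S\cap\nu|$, since $v^*$ is adjacent to every vertex of $S$ and to nothing in $T$. We want to show
\[
m_2(\nu')=\frac{e(\nu)-1+|S\cap \nu|}{|\nu|-1}>\frac{e(\nu)-1}{|\nu|-2}=m_2(\nu).
\]
This is the mediant comparison: adding $|S\cap\nu|$ to the numerator and $1$ to the denominator increases the ratio exactly when $|S\cap \nu|>m_2(\nu)=m_2(F)$. If this holds, then $m_2(\nu')>m_2(F)$, a contradiction.

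It therefore suffices to prove $|S\cap\nu|>m_2(F)$. The natural bound is
\[
e(\nu)\le \sum_{u\in S\cap\nu}\deg_{F[\nu]}(u)\le |S\cap \nu|\cdot|T\cap\nu|.
\]
This gives $m_2(\nu)\le \frac{st-1}{s+t-2}$, where $s=|S\cap\nu|$ and $t=|T\cap\nu|$. A direct comparison of this with $s$ does not close the argument, so I would use a different route.

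The better approach compares $\nu'$ with $\nu$ using $m_2(F)$ itself. Since $F$ has a cycle, Lemma~\ref{lem:m2} gives $m_2(F)>1$. A cleaner sufficient condition is $m_2(\nu')\ge m_2(\nu)$ with strict inequality.

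I expect the main obstacle is the case where $s$ is small relative to $m_2(F)$. For example, if $\nu$ is a $K_{s,t}$ with large $t$, then $m_2(\nu)\approx s$, and we need $s>m_2(F)\ge m_2(\nu)$. I would handle this via the identity
\[
m_2(\nu)=\frac{e(\nu)-1}{s+t-2}\le\frac{st-1}{s+t-2},
\]
and check that $\frac{st-1}{s+t-2}<s$ exactly when $st-1<s^2+st-2s$, that is, when $s^2-2s+1>0$, i.e.\ $s\ne 1$.

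This leaves the case $s=1$. Then $F[\nu]$ is a star plus isolated vertices, so $m_2(\nu)\le 1<m_2(F)$, contradicting $m_2(\nu)=m_2(F)$.

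For $s\ge 2$ we get $m_2(F)=m_2(\nu)<s$, hence $m_2(\nu')>m_2(F)$, again a contradiction. Finally, the case $s=0$ is impossible: $F[\nu]$ would then have no edges, so $m_2(\nu)\le 0<m_2(F)$.

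This completes the plan: every case leads to a contradiction, so $v^*\in\nu$.
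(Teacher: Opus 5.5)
Your proof is correct and is essentially the paper's argument. Your mediant criterion $|S\cap\nu|>m_2(\nu)$ is exactly positivity of the paper's numerator $(|\nu|-2)|S\cap\nu|-e(\nu)+1$, and both proofs use $e(\nu)\le |S\cap\nu|\,|T\cap\nu|$ to reduce to $(|S\cap\nu|-1)^2>0$, treating $|S\cap\nu|\le 1$ separately via $m_2(\nu)\le 1<m_2(F)$. One presentational point: your aside that the direct comparison ``does not close the argument'' should be deleted, since that comparison is precisely what finishes the proof.
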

\begin{proof}
    Assume for contradiction that there exists some $\nu$ with $|\nu|\ge 3$ and $m_2(\nu)=m_2(F)$ such that $v^*\notin \nu$ and let $\nu^*:=\nu\cup \{v^*\}$.  We will show that $m_2(\nu^*)>m_2(\nu)$, contradicting $m_2(\nu)=m_2(F)=\max_{\nu'} m_2(\nu')$.  To this end, we have by definition 
    \begin{equation}m_2(\nu^*)-m_2(\nu)=\frac{e(\nu)+|S\cap \nu|-1}{|\nu|-1}-\frac{e(\nu)-1}{|\nu|-2}=\frac{(|\nu|-2)|S\cap \nu|-e(\nu)+1}{(|\nu|-1)(|\nu|-2)}.\label{eq:m2Star}
    \end{equation}
        We will get our desired contradiction if we can show that the quantity above is positive, and in particular it suffices to show that the numerator is positive.  To this end, observe that trivially $e(\nu)\le |S\cap \nu|(|\nu|-|S\cap \nu|)$, implying that
        \[(|\nu|-2)|S\cap \nu|-e(\nu)+1\ge (|S\cap \nu|-2)|S\cap \nu|+1.\]
        If $|S\cap \nu|\ge 2$, then the quantity above (and hence the difference in \eqref{eq:m2Star}) will be positive, giving the desired contradiction.  On the other hand, if $|S\cap \nu|\le 1$, then $F[\nu]$ is a forest and hence $m_2(\nu)\le 1$.  But $F$ containing a cycle implies that $m_2(F)>1$ by \Cref{lem:m2}, a contradiction to $m_2(\nu)=m_2(F)$.
\end{proof}

We now prove our second main result.

\begin{proof}[Proof of \Cref{thm:upperSmallp}]
    Recall that we wish to show that if $F$ is a graph with $e(F)\ge 2$ which has a bipartition $S\cup T$ such that there exists a vertex $v^*\in T$ which is adjacent to every vertex in $S$, and which is moreover such that any set $\nu\sub V(F)$ with $|\nu|\ge 3$ and $m_2(\nu)=m_2(F)$ has $S\sub \nu$; then a.a.s.\ 
    \[\ex(G_{n,p},F)=n^{2-1/m_2(F)}(\log n)^{\Theta(1)} \ \textrm{for all } n^{\frac{-1}{m_2(F)}}\ll p \le n^{\frac{1}{e(F)^2}-\frac{1}{m_2(F)}}.\]
    The lower bound for all $p\gg n^{-1/m_2(F)}$ follows from \Cref{prop:nieSpiroEasyRanges}, so it suffices to prove the upper bound.

    If $F$ is a forest, then $F$ is not a subgraph of a matching since $v^*$ is adjacent to every vertex of $S$ and $e(F)\ge 2$.  By \Cref{lem:m2} we have $m_2(F)=1$, and hence we trivially have for all values of $p$ that \[\ex(G_{n,p},F)\le \ex(n,F)=O(n)=O(n^{2-1/m_2(F)}),\]
    giving the desired result.  From now on we assume that $F$ contains a cycle.   Moreover, the fact that $F$ has a vertex complete to one side implies that $F$ is $e(F)$-semi-bounded with respect to $(S,T,v^*)$ since each $v\in T\sm \{v^*\}$ is trivially incident to at most $e(F)$ edges.

    In view of the observations above and \Cref{thm:beta}, we see that it suffice to show that $\be(F)\ge \frac{1}{e(F)^2}$, i.e.\ that every $\nu\in B_F$ satisfies $\be(\nu)\ge \frac{1}{e(F)^2}$.  For this we use the following.
    \begin{claim}
        Every $\nu\in B_F$ has $|\nu|\ge 3$ and $m_2(\nu)<m_2(F)$.
    \end{claim}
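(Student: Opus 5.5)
The plan is to use the hypothesis on $F$ directly. By \Cref{lem:fvse}, every $\nu$ with $S\cup\{v^*\}\sub\nu$ has $f(\nu)=e(\nu)$. So any $\nu\in B_F$, which has $f(\nu)>e(\nu)$, must miss either $v^*$ or some vertex of $S$. I will prove the two assertions of the claim separately.

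First, $|\nu|\ge 3$. Membership in $B_F$ gives $\del(F[\nu])\ge 1$, so $F[\nu]$ has an edge. If $|\nu|\le 2$, then $F[\nu]$ is a single edge $uv$ with $u\in S$ and $v\in T$. By \Cref{lem:remove}(a) this gives $f(\nu)=1=e(\nu)$, contradicting $f(\nu)>e(\nu)$. Hence $|\nu|\ge 3$, and $m_2(\nu)$ is defined.

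Second, $m_2(\nu)<m_2(F)$. Since $m_2(F)$ is the maximum of $m_2(\nu')$ over sets $\nu'$ with at least three vertices, we already have $m_2(\nu)\le m_2(F)$, so it suffices to rule out equality. Suppose $m_2(\nu)=m_2(F)$. We are in the case where $F$ contains a cycle, so \Cref{lem:includeComplete} gives $v^*\in\nu$. The hypothesis of \Cref{thm:upperSmallp} gives $S\sub\nu$. Then $S\cup\{v^*\}\sub\nu$, and \Cref{lem:fvse} yields $f(\nu)=e(\nu)$, contradicting $\nu\in B_F$.

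The main subtlety is simply remembering that \Cref{lem:includeComplete} needs $F$ to contain a cycle, which holds in the current case. Once the claim is established, the remainder of the proof of \Cref{thm:upperSmallp} will bound $\be(\nu)$ from below. The numerator $|\nu|-2-(e(\nu)-1)/m_2(F)$ is positive and is a rational number whose denominator is controlled by the numerator and denominator of $m_2(F)$. The denominator $f(\nu)-e(\nu)$ is at most $e(F)$ by \Cref{lem:fBounds}. Together these should give $\be(\nu)\ge 1/e(F)^2$.
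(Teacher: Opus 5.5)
Your proposal is correct and follows the paper's proof essentially verbatim. Both parts match: $|\nu|\ge 3$ comes from \Cref{lem:remove}(a) applied to a single edge, and equality $m_2(\nu)=m_2(F)$ is ruled out via \Cref{lem:includeComplete} (using that $F$ contains a cycle), the theorem's hypothesis, and \Cref{lem:fvse}.
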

    \begin{proof}
        For the first part, assume for contradiction that there existed some $\nu\in B_F$ on at most 2 vertices.  Because $\nu\in B_F$ implies $\del(F[\nu])\ge 1$, we must have that $\nu$ consists of a single edge of $F$.  In this case $f(\nu)=1=e(\nu)$ by \Cref{lem:remove}(a), a contradiction to $\nu\in B_F$ which requires $f(\nu)>e(\nu)$.
        
        For the second half of the claim, we observe that \Cref{lem:includeComplete} and the hypothesis of the theorem implies any $\nu$ with $|\nu|\ge 3$ and $m_2(\nu)=m_2(F)$ has $S\cup \{v^*\}\sub \nu$.  By \Cref{lem:fvse} this implies that every such $\nu$ has $f(\nu)=e(\nu)$, and hence $\nu \notin B_F$ by definition.  We conclude that every $\nu\in B_F$ must have $m_2(\nu)<m_2(F)$.
    \end{proof}
    Now consider any $\nu\in B_F$, which by the claim above must have $|\nu|\ge 3$ and $m_2(\nu)<m_2(F)$.  Let $\nu'\sub V(F)$ be any set achieving $m_2(\nu')=m_2(F)$, and observe that 
        \[(e(\nu')-1)(|\nu|-2)-(e(\nu)-1)(|\nu'|-2)\ge 1,\]
        as this expression must be a positive integer in order to have $m_2(\nu)<m_2(\nu')=m_2(F)$.  Writing the definition of $\be(\nu)$ in terms of $m_2(\nu')=m_2(F)$ and using the inequality above together with the fact that $\nu\in B_F$ implies $f(\nu)-e(\nu)> 0$ gives that
        \[\be(\nu)=\frac{(e(\nu')-1)(|\nu|-2)-(e(\nu)-1)(|\nu'|-2|)}{(e(\nu')-1)(f(\nu)-e(\nu))}\ge \frac{1}{(e(\nu')-1)(f(\nu)-e(\nu))}\ge \frac{1}{e(F)^2},\]
        where this last step used $f(\nu)\le e(F)$ from \Cref{lem:fBounds}.
        We conclude that $\be(F)\ge \frac{1}{e(F)^2}$, which together with \Cref{thm:beta} gives the desired result.
\end{proof}

\subsubsection{Proof of Theorems~\ref{thm:multigraph} and \ref{thm:generalTightExamples}}

\begin{proof}[Proof of \Cref{thm:generalTightExamples}]
    We recall the setup of the theorem: $F$ is a graph which has a bipartition $S\cup T$ and a vertex $v^*\in T$ such that $v^*$ is adjacent to all of $S$ and such that every $v\in T\sm \{v^*\}$ has degree exactly $r\ge 2$.  For each $\mu\sub S$, let $N(\mu)\sub T$ denote the set of vertices of $T$ that are adjacent to at least one vertex of $\mu$.  
    
    We aim to show that if $F$ contains a subgraph isomorphic to some $K_{r,t}$ with $\ex(n,K_{r,t})=\Theta(n^{2-1/r})$, if $F$ is 2-balanced, and if $F$ satisfies
    \[\max_{\mu\sub S,\ |\mu|\ge 2} \frac{e(F[\mu\cup N(\mu)])-|N(\mu)|}{|\mu|-1}=\frac{e(F)-|T|}{|S|-1},\]
    then a.a.s.
    \[\ex(G_{n,p},F)=\begin{cases}
        \Theta(p^{1-1/r}n^{2-1/r}) &  n^{-\frac{|S|-1}{|S|-1+r(|T|-1)}}(\log n)^{O(1)}\le p,\\ 
        n^{2-\frac{|S|+|T|-2}{|S|-1+r(|T|-1)}}(\log n)^{\Theta(1)} & n^{-\frac{|S|+|T|-2}{|S|-1+r(|T|-1)}}\ll p\le n^{-\frac{|S|-1}{|S|-1+r(|T|-1)}}(\log n)^{O(1)},\\ 
        (1+o(1))p{n\choose 2} & n^{-2}\ll p\ll n^{-\frac{|S|+|T|-2}{|S|-1+r(|T|-1)}}.
    \end{cases}\]
        
    Observe that the hypothesis of $F$ being 2-balanced and the conditions on $F$ implies
    \[m_2(F)=\frac{e(F)-1}{v(F)-2}=\frac{|S|-1+r(|T|-1)}{|S|+|T|-2}.\]
    Thus \Cref{prop:nieSpiroEasyRanges} gives the desired bounds for $p\ll n^{-\frac{|S|+|T|-2}{|S|-1+r(|T|-1)}}$ and the desired lower bounds for $p\ll n^{-\frac{|S|-1}{|S|-1+r(|T|-1)}}$.  Similarly the hypothesis of the theorem and \Cref{prop:KrtLower} implies the lower bound for $p\gg n^{-\frac{|S|-1}{|S|-1+r(|T|-1)}}$.

    With the above in mind, all that remains is to prove the upper bounds for $p\gg n^{-\frac{|S|+|T|-2}{|S|-1+r(|T|-1)}}$.  For this we, claim that it suffices to prove that we have $\ex(G_{n,p},F)=O(p^{1-1/r}n^{2-1/r})$ a.a.s.\ for $p\gg n^{-\frac{|S|-1}{|S|-1+r(|T|-1)}}(\log n)^{C}$ for some $C$. 
    Indeed, in this case for any $p\le p_0:=n^{-\frac{|S|-1}{|S|-1+r(|T|-1)}}(\log n)^C$ we would have by monotonicity that a.a.s.
    \begin{align*}\ex(G_{n,p},F)\le \ex(G_{n,p_0},F) &\le n^{2-1/r-\frac{(r-1)(|S|-1)}{r[|S|-1+r(|T|-1)]}}(\log n)^{O(1)}\\ &=n^{2-\frac{r(|S|-1)+r(|T|-1)}{r[|S|-1+r(|T|-1)]}}(\log n)^{O(1)}\\ &=n^{2-\frac{|S|+|T|-2}{|S|-1+r(|T|-1)}}(\log n)^{O(1)}.\end{align*}

    Observe that $F$ is $r$-semi-bounded and contains a cycle due to it containing some $K_{r,t}$ with $r\ge 2$ and $\ex(n,K_{r,t})=\Theta(n^{2-1/r})$.  Thus in view of \Cref{thm:alpha}, to prove the desired upper bound on $\ex(G_{n,p},F)$ for large $p$, it suffices to show $\al(F)\ge \frac{|S|-1}{|S|-1+r(|T|-1)}$.

    To this end, let $\nu\in A_F$ be an element with $\al(\nu)=\al(F)$, and amongst such $\nu$ we further choose one with $|\nu|$ as large as possible.    Because every $v\in T\sm \{v^*\}$ has degree $r$, we have by \Cref{lem:simple} that every $\nu'\in A_F$ has $f(\nu')=|S\cap \nu'|+r(|T\cap \nu'|-1)$ and
    \begin{equation}\al(\nu')=\frac{(r-1)(|S\cap \nu'|-1)}{r(e(\nu')-|T\cap \nu'|)+1-|S\cap \nu'|}.\label{eq:easyAlFormula}\end{equation}
    We will prove the result by analyzing $\nu$ in terms of $\mu:=S\cap \nu$.  
    \begin{claim}
        We have $|\mu|\ge 2$.
    \end{claim}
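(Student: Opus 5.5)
We must show $|\mu|\ge 2$, where $\nu\in A_F$ attains $\al(F)$ and has $|\nu|$ maximal among such sets. The plan is to argue by contradiction: if $|\mu|\le 1$, then $\nu$ cannot be such a minimizer.

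\emph{Step 1: $\mu\ne\emptyset$.} Since $\nu\in A_F$ we have $\del(F[\nu])\ge 1$. So every vertex of $\nu$ has a neighbor in $\nu$, and as $F$ is bipartite with parts $S,T$, some vertex of $\nu$ lies in $S$. Thus $|\mu|\ge 1$.

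\emph{Step 2: rule out $|\mu|=1$.} Suppose $|\mu|=1$. Then \eqref{eq:easyAlFormula} gives $\al(\nu)=0$, since the numerator $(r-1)(|S\cap\nu|-1)$ vanishes. It is worth double-checking that the denominator is positive here, so that this is legitimate. $F[\nu]$ is a star centered at the unique $u\in\mu$ with leaves $T\cap\nu$, so $e(\nu)=|T\cap\nu|$ and the denominator equals $r\cdot 0+1-1=0$. That contradicts $\nu\in A_F$, whose defining condition $f(\nu)-1<r(e(\nu)-1)$ forces the denominator $r(e(\nu)-1)+1-f(\nu)$ to be positive.

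To make this airtight I would compute $f(\nu)$ directly from \Cref{lem:simple}, getting $f(\nu)=1+r(|T\cap\nu|-1)$, and $e(\nu)=|T\cap\nu|$. Then $r(e(\nu)-1)+1-f(\nu)=r(|T\cap\nu|-1)+1-1-r(|T\cap\nu|-1)=0$, so $\nu\notin A_F$. One subtlety is that \Cref{lem:simple} takes the maximum over $T\cap\nu$ with degree $r$ unless $v^*\in\nu$. If $v^*\in\nu$ the max is $\deg_F(v^*)=|S|$, which is at least $r$ because $F$ contains $K_{r,t}$. So I will recompute $f$ directly from the definition rather than rely on \Cref{lem:simple}.

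\emph{Step 3: the case $v^*\in\nu$ with $|S|>r$.} This is the step I expect to be the main obstacle. Here $f(\nu)=1+r(|T\cap\nu|-1)$ still holds, because $f$ subtracts the maximum degree $\deg_F(v^*)$ and keeps $r$ for each of the remaining $|T\cap\nu|-1$ vertices. The star structure therefore still gives denominator $0$, so $\nu\notin A_F$. In every case $|\mu|=1$ contradicts $\nu\in A_F$, and combined with Step 1 this proves $|\mu|\ge 2$. Maximality of $|\nu|$ is not needed for this claim.
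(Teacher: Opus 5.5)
Your proof is correct and matches the paper's: if $|\mu|=1$, then $\del(F[\nu])\ge 1$ forces $F[\nu]$ to be a star centered at the unique vertex of $\mu$, so $e(\nu)=|\nu|-1$ and $f(\nu)-1=r(|\nu|-2)=r(e(\nu)-1)$, contradicting $\nu\in A_F$. Your Step 3 worry is unfounded, because \Cref{lem:simple} already covers the case $v^*\in\nu$ (the subtracted maximum is then $\deg_F(v^*)=|S|\ge r$). You should also drop the opening remark that $\al(\nu)=0$, since, as you observe yourself, the denominator vanishes there.
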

    \begin{proof}
        If this were not the case, then the condition $\del(F[\nu])\ge 1$ given by having $\nu\in A_F$ would imply that $F[\nu]$ is a star with $e(\nu)=|\nu|-1$.  On the other hand, by the formula for $f$ mentioned above we would have \[f(\nu)-1=r(|\nu|-2)=r(e(\nu)-1),\]
        a contradiction to $\nu\in A_F$.
    \end{proof}
    \begin{claim}
        We have $T\cap \nu=N(\mu)$.
    \end{claim}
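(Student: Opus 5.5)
Recall that $\nu\in A_F$ attains $\al(\nu)=\al(F)$, is chosen with $|\nu|$ maximal among such sets, and $\mu=S\cap\nu$ with $|\mu|\ge 2$ by the previous claim. The plan is to prove the two inclusions $T\cap\nu\sub N(\mu)$ and $N(\mu)\sub T\cap\nu$ separately, each by contradiction using the formula \eqref{eq:easyAlFormula}. Throughout I write $a:=(r-1)(|\mu|-1)>0$ for the numerator and $d:=r(e(\nu)-|T\cap\nu|)+1-|\mu|>0$ for the denominator, which is positive since $\nu\in A_F$.

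For $T\cap\nu\sub N(\mu)$, I will simply use the condition $\del(F[\nu])\ge 1$ that comes with $\nu\in A_F$. Any $v\in T\cap\nu$ must have a neighbor in $\nu$. Since $F$ is bipartite, that neighbor lies in $S\cap\nu=\mu$, and so $v\in N(\mu)$.

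For $N(\mu)\sub T\cap\nu$, suppose some $w\in N(\mu)\sm\nu$ exists and set $\nu'=\nu\cup\{w\}$. The vertex $w$ has at least one neighbor in $\mu$, so $\del(F[\nu'])\ge 1$, and $S\cap\nu'=\mu$. Moreover $e(\nu')=e(\nu)+k$, where $k=|N_F(w)\cap\mu|\ge 1$.
\begin{itemize}
\item If $w\ne v^*$, then $k\le r$. The denominator of \eqref{eq:easyAlFormula} becomes $d+r(k-1)\ge d>0$, so $\nu'\in A_F$. The numerator is unchanged, so $\al(\nu')\le\al(\nu)=\al(F)$.
\item If $w=v^*$, then $k=|\mu|$, and the new denominator is $d+r(|\mu|-1)>d$, so again $\nu'\in A_F$ and $\al(\nu')\le\al(F)$.
\end{itemize}
In either case $\nu'\in A_F$ attains the minimum, so $\al(\nu')=\al(F)$ while $|\nu'|>|\nu|$, contradicting the maximality of $|\nu|$.

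The one point needing care is that \eqref{eq:easyAlFormula} via \Cref{lem:simple} is valid for $\nu'$, which requires $\nu'\in A_F$. This follows because the denominator $r(e(\nu')-1)+1-f(\nu')$ equals the displayed expression with the formula $f(\nu')=|S\cap\nu'|+r(|T\cap\nu'|-1)$. The argument also relies on each $v\in T\sm\{v^*\}$ having degree exactly $r$, so that adding $w$ increases $f$ by exactly $r$ (or by $r$ again when $w=v^*$, since that case too adds one vertex of $T$ and $f$ depends only on $|T\cap\nu|$). Beyond this bookkeeping the claim is routine.
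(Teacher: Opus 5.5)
Your proof is correct and follows the paper's argument. The inclusion $T\cap\nu\sub N(\mu)$ comes from $\del(F[\nu])\ge 1$. For the reverse inclusion, you add a missing $w\in N(\mu)$ and note via \eqref{eq:easyAlFormula} that the numerator is unchanged while the denominator does not decrease, which contradicts the maximality of $|\nu|$. Your case split on whether $w=v^*$ is harmless but unnecessary: the paper handles both cases at once using $e(\nu')\ge e(\nu)+1$ and $f(\nu')=f(\nu)+r$.
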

    \begin{proof}
        First observe that if $v\in T\cap \nu\sm N(\mu)$, then by definition of $\mu$ and $N(\mu)$; the vertex $v$ must have degree 0 in $F[\nu]$, a contradiction to $\nu\in A_F$ inducing a graph of minimum degree at least 1.  We conclude that $T\cap \nu$ is a subset of $N(\mu)$, which itself is a subset of $T$ by definition.  Now assume for contradiction that there existed some $v\in N(\mu)\sm \nu$ and consider $\nu'=\nu\cup \{v\}$.  
        
        We begin by showing $\nu'\in A_F$.  Indeed, observe that $e(\nu')\ge e(\nu)+1$ and that $f(\nu')=f(\nu)+r$.  Because $f(\nu)-1<r(e(\nu)-1)$ by assumption of $\nu\in A_F$, we also have $f(\nu')-1<r(e(\nu')-1)$.  Similarly $\del(F[\nu])\ge 1$ implies $\del(F[\nu'])\ge 1$, so  we conclude that $\nu'\in A_F$.

        By \eqref{eq:easyAlFormula}, we see that
        \[\al(\nu')\le \frac{(r-1)(|S\cap \nu|-1)}{r(e(\nu)+1-(|T\cap \nu|+1))+1-|S\cap \nu|}=\al(\nu),\]
        a contradiction to our choice of $\nu$ being the largest set in $A_F$ which minimizes $\al$.  We conclude that $v\in \nu$, giving the result.
    \end{proof}
    With these two claims, we can use \eqref{eq:easyAlFormula} to conclude
    \[\al(F)=\al(\nu)=\frac{(r-1)(|\mu|-1)}{r(e(\mu\cup N(\mu))-|N(\mu)|)+1-|\mu|}=\frac{r-1}{\frac{r(e(F[\mu\cup N(\mu)])-|N(\mu)|}{|\mu|-1}-1}\ge \frac{r-1}{\frac{r(e(F)-|T|)}{|S|-1}-1},\]
    where the inequality used the hypothesis from the theorem.  Rearranging this expression and using that $e(F)=|S|+r(|T|-1)$ by definition of $F$, we find that
    \[\al(F)\ge \frac{(r-1)(|S|-1)}{r(r-1)|T|+(r-1)|S|-r^2+1}=\frac{|S|-1}{|S|-1+r|T|-r},\]
    proving the result.      
\end{proof}
As an aside, one can more generally use \Cref{thm:alpha} to prove the upper bounds predicted in \Cref{conj:MS} for $r$-semi-bounded graphs $F$ whenever $\ex(n,F)=\Theta(n^{2-1/r})$ and $\al(F)=\frac{r}{(r-1)m_2(F)}-\frac{1}{r-1}$.  Indeed, the hypothesis given in \Cref{thm:generalTightExamples} is designed specifically to achieve this end.

It remains to prove \Cref{thm:multigraph}, which we recall involves graphs $F_M$ obtained from a multigraph $M$ by subdividing each edge of $M$ and then adding a vertex adjacent to every vertex of $M$.  This result will essentially follow from \Cref{thm:generalTightExamples} after verifying that $F_M$ is 2-balanced whenever it satisfies the hypothesis of \Cref{thm:multigraph}.

\begin{lem}\label{lem:multiBalanced}
    If $M$ is a multigraph 
     satisfying $e(M)\ge 1$ and
    \[\max_{\mu\sub V(M),\ |\mu|\ge 2}\frac{e(M[\mu])}{|\mu|-1}=\frac{e(M)}{v(M)-1},\]
    then
    \[m_2(F_M)=\frac{e(F_M)-1}{v(F_M)-2}=\frac{2e(M)+v(M)-1}{e(M)+v(M)-1}.\]
\end{lem}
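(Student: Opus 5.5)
We must show $m_2(\nu)\le m_2(V(F_M))$ for every $\nu\sub V(F_M)$ with $|\nu|\ge 3$. Write $V(F_M)=V(M)\cup E'\cup\{v^*\}$, where $E'$ is the set of subdivision vertices (one per edge of $M$, counted with multiplicity). Here $S=V(M)$ and $T=E'\cup\{v^*\}$. We have $e(F_M)=2e(M)+v(M)$ and $v(F_M)=v(M)+e(M)+1$, so the target value is $\lambda:=\frac{2e(M)+v(M)-1}{e(M)+v(M)-1}$. Since $F_M$ contains a cycle whenever it has a vertex of $E'$ (the path $x,e,y,v^*,x$ is a $4$-cycle), \Cref{lem:includeComplete} lets us assume $v^*\in\nu$. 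The degenerate case $e(M)=0$ is excluded by hypothesis.

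\emph{Step 1 (normalizing $\nu$).} Fix $\nu\ni v^*$ and let $\mu=\nu\cap V(M)$. A subdivision vertex in $\nu$ contributes at most $2$ edges, and exactly $2$ only if both of its endpoints lie in $\mu$. Such a vertex raises $e(\nu)-1$ by $2$ and $|\nu|-2$ by $1$. Since $\lambda<2$, adding a vertex that contributes $2$ edges moves the ratio toward $2$, so it only helps to add these, and we may assume $\nu$ contains all subdivision vertices of edges in $M[\mu]$. Conversely, a subdivision vertex contributing at most $1$ edge can be removed: it changes the numerator by at most $1$ and the denominator by $1$, and since the ratio being bounded exceeds $1$ (it is at least $m_2$ of a star-like set $\ge 1$), removal does not decrease the ratio. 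With $k=|\mu|$ and $j=e(M[\mu])$, we get $e(\nu)=2j+k$ and $|\nu|=k+j+1$, so
\[m_2(\nu)=\frac{2j+k-1}{j+k-1}=1+\frac{j}{j+k-1}.\]
Care is needed in the corner cases $k\le 1$ and when removals drop $|\nu|$ below $3$. There $\nu$ induces a star, so $m_2(\nu)\le 1<\lambda$, which follows from \Cref{lem:m2} applied to the cycle-containing $F_M$.

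\emph{Step 2 (the inequality).} The function $1+\frac{j}{j+k-1}$ is increasing in $\frac{j}{k-1}$. The hypothesis says $\frac{j}{k-1}\le\frac{e(M)}{v(M)-1}$ for every $\mu$ with $k\ge 2$, so $m_2(\nu)\le 1+\frac{e(M)}{e(M)+v(M)-1}=\lambda$. Taking $\nu=V(F_M)$ attains this value, so $F_M$ is 2-balanced with the stated value.

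The main obstacle is making the monotone normalization in Step 1 rigorous. The cleanest route is to argue directly: every $\nu\ni v^*$ satisfies $e(\nu)-1\le k-1+2j'+j''$, where $j'$ counts included subdivision vertices with both ends in $\mu$ and $j''$ those with one end. Then $|\nu|-2=k-1+j'+j''$, so the ratio is at most $\frac{k-1+2j'+j''}{k-1+j'+j''}\le\frac{k-1+2j'}{k-1+j'}$ (the mediant inequality, since the first ratio is $\ge 1$). This is increasing in $j'\le j$, which avoids the step-by-step removal argument.
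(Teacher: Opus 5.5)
Your proof is correct and follows essentially the same route as the paper: reduce to $v^*\in\nu$ via \Cref{lem:includeComplete}, classify the subdivision vertices of $\nu$ by how many endpoints they have in $\mu=\nu\cap V(M)$ to get the bound $1+\frac{j}{j+k-1}$ with $j=e(M[\mu])$ and $k=|\mu|$, and then finish with the hypothesis on $M$. The only difference is minor: the paper fixes a maximizing $\nu$ and uses strict-improvement exchanges to force it to contain exactly the subdivision vertices of edges of $M[\mu]$, while your closing mediant bound handles every $\nu\ni v^*$ directly. That makes your first, step-by-step normalization sketch unnecessary, which is just as well, since its appeal to $\lambda<2$ should really have been to $m_2(\nu)<2$.
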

\begin{proof}
    Let $\nu\sub V(F_M)$ be such that $m_2(\nu)=m_2(F_M)$.  By \Cref{lem:includeComplete} we necessarily have $v^*\in \nu$ since $e(M)\ge 1$ implies $F_M$ contains a 4-cycle.  We will need a slight strengthening of this observation.  To this end, let $\mu:=V(M)\cap \nu$.
    \begin{claim}
        The set $\nu\sm V(M)$ consists precisely of $v^*$ together with every $v\in V(F_M)\sm V(M)$ which is adjacent to two vertices $u_1,u_2\in \mu$.
    \end{claim}
    \begin{proof}
        That $v^*\in \nu\sm V(M)$ follows from our comment above. Assume for contradiction that there exists some $v\in V(F_M)\sm (V(M)\cup \{v^*\})$ which is adjacent to some $u_1,u_2\in \mu$ with $v\notin \nu$, and let $\nu'=\nu\cup \{v\}$.  Since $v\ne v^*$, the set $\nu'$ induces exactly 2 more edges than $\nu$.  This implies
        \begin{equation}m_2(\nu')-m_2(\nu)=\frac{e(\nu)+1}{|\nu|-1}-\frac{e(\nu)-1}{|\nu|-2}=\frac{2|\nu|-e(\nu)-3}{(|\nu|-1)(|\nu|-2)}.\label{eq:munu}\end{equation}
        Observe that if $m:=|\mu|=| V(M)\cap \nu|$ then $e(\nu)\le m+2(|\nu|-m-1)=2|\nu|-m-2$ since $m$ edges are incident to $v^*$ and at most 2 edges are incident to each vertex in $\nu \sm (V(M)\cup \{v^*\})$.  Since $u_1,u_2\in \mu$ we have $m\ge 2$, in total giving $e(\nu)\le 2|\nu|-4$.  This together with \eqref{eq:munu} implies $m_2(\nu')>m_2(\nu)$, a contradiction to $m_2(\nu)=m_2(F_M)$.

        Now assume for contradiction that $\nu$ contained some $v\in V(F_M)\sm (M\cup \{v^*\})$ which was adjacent to at most 1 vertex of $\mu$ and let $\nu':=\nu\sm \{v\}$.  First observe that $m_2(\nu)=m_2(F_M)>1$ since $F_M$ contains a cycle.  Having $m_2(\nu)>1$ implies $e(\nu)\ge |\nu|$, which in turn implies that $F[\nu]$ contains an (even) cycle, and hence $F[\nu']$ contains at least 2 edges is a set of size at least 3 (meaning $m_2(\nu')$ is defined) and satisfies
        \[m_2(\nu')-m_2(\nu)=\frac{e(\nu)-2}{|\nu|-3}-\frac{e(\nu)-1}{|\nu|-2}=\frac{e(\nu)-|\nu|+1}{(|\nu|-3)(|\nu|-2)}>0.\]
        This again gives a contradiction to $m_2(\nu)=m_2(F)$, giving the claim.
    \end{proof}
    By the claim, we find
    \[m_2(F)=m_2(\nu)=\frac{2e(M[\mu])+|\mu|-1}{e(M[\mu])+|\mu|-1}=1+\frac{e(M[\mu])}{e(M[\mu])+|\mu|-1}.\]
    We claim that this expression is maximized when $\mu=V(M)$, which will give the desired result.  Indeed, by taking the recipricol this is equivalent to saying that
    \[\frac{e(M)+v(M)-1}{e(M)}=\min_{\mu'\sub V(M)} \frac{e(M[\mu'])+|\mu'|-1}{e(M[\mu'])}=1+\min_{\mu'\sub V(M)} \frac{|\mu'|-1}{e(M[\mu'])},\]
    and this holds by the hypothesis of the lemma.  
\end{proof}
We now finish the proof of \Cref{thm:multigraph}.
\begin{proof}[Proof of \Cref{thm:multigraph}]
    Let $M$ be a multigraph as in the hypothesis of the theorem.  Observe that $F_M$ has a bipartition $S\cup T$ with $S=V(M)$ and $T=V(F_M)\sm V(M)$ such that every vertex in $T\sm \{v^*\}$ has degree 2, and such that $F$ contains a $K_{2,2}$ (since $e(M)\ge 1$), which is well known to satisfy $\ex(n,K_{2,2})=\Theta(n^{3/2})$.  \Cref{lem:multiBalanced} verifies that $F_M$ is 2-balanced, so in view of \Cref{thm:generalTightExamples} and the fact that $|S|=v(M)$ and $|T|=e(M)+1$, to prove the result it suffices to verify that
    \[\max_{\mu\sub S,\ |\mu|\ge 2} \frac{e(F[\mu\cup N(\mu)])-|N(\mu)|}{|\mu|-1}=\frac{e(F_M)-|T|}{|S|-1}=\frac{v(M)+2e(M)-e(M)-1}{v(M)-1}=1+\frac{e(M)}{v(M)-1},\]
    where here $N(\mu)$ is the set of vertices in $T$ which are adjacent to a vertex in $\mu$.  Observe that $N(\mu)$ always consists of $v^*$, the edges in $E(M[\mu])$ (i.e.\ the vertices in $T\sm \{v^*\}$ which have both their neighbors in $\mu$), and some remainder set $R_\mu:=N(\mu)\sm (\{v^*\}\cup E(M[\nu]))$ which consists precisely of the set of vertices adjacent to 1 vertex of $\mu$.  In this case we observe
    \begin{align*}\frac{e(F[\mu\cup N(\mu)])-|N(\mu)|}{|\mu|-1}&=\frac{|\mu|+2e(M[\mu])+|R_\mu|-1-e(M[\mu])-|R_\mu|}{|\mu|-1}\\ &=1+\frac{e(M[\mu])}{|\mu|-1}\le 1+\frac{e(M)}{v(M)-1},\end{align*}
    with this last inequality using the hypothesis of the theorem.  We conclude that we can apply \Cref{thm:generalTightExamples} to obtain the desired random Tur\'an bounds on $F_M$.
\end{proof}

\section{Concluding Remarks}\label{sec:concluding}
In this paper, we proved effective upper bounds on the random Tur\'an problem for bipartite graphs $F$ which have a vertex complete to one side of the bipartition through our technical results Theorems~\ref{thm:alpha} and \ref{thm:beta}, from which we were able to derive a number of related upper bounds.  In particular, we proved \Cref{thm:maxDegree} which can be thought of as a probabilistic analog of the bound $\ex(n,F)=O(n^{2-1/r})$ for bipartite graphs $F$ where one of the parts has all but 1 vertex of degree at most $r$.  In fact, the standard proof of this extremal result allows for up to $r$ vertices to have degree more than $r$.  It seems plausible that our results could extend to this setting as well, which would be a natural limit for what we would could prove with this approach.  We require some additional notation to state such a bound precisely.

\begin{defn}
    We say that a bipartite graph $F$ is a \textit{$(c,r)$-bounded} with respect to a triple of sets $(S,T,T^*)$ if $S\cup T$ is a bipartition of $F$, if $T^*\sub T$ is a set of $c$ vertices which are adjacent to every vertex of $S$, and if every vertex in $T\sm T^*$ has degree at most $r$.  We simply say that $F$ is $(c,r)$-bounded if it is $(c,r)$-bounded with respect to some triple $(S,T,T^*)$.

    Given a graph $F$ which is $(c,r)$-bounded with respect to a triple $(S,T,T^*)$, we define for each $\nu\sub V(F)$ the quantity
    \[e(\nu):=e(F[\nu]),\]
		and if $e(\nu)\ge 1$ we define
        \[f(\nu):=c|S\cap \nu|+\sum_{v\in T\cap \nu\sm T^*}\deg_F(v)-g(\nu),\]
        where
        \[g(\nu):=\begin{cases}
    |T^*\sm \nu| & T^*\cap \nu \ne \emptyset,\\ 
    \operatorname*{max} \limits_{\substack{w\in T\cap \nu \\ \deg_{F[\nu]}(w)\ge 1}} \deg_F(w)+(c-1)\deg_{F[\nu]}(w) &  T^*\cap \nu =\emptyset.
\end{cases}\]
We define $\al(F),\be(F)$ exactly as we did for $r$-semi-bounded graphs in terms of this new definition of $f$.
\end{defn}
Observe that $c=1$ exactly recovers the definitions we had before, and with some work one can show that analogs of the lemmas in \Cref{sec:preliminaries} continue to hold for $(c,r)$-bounded graphs with $1\le c\le r$ after some modifications to their statements.  With this in mind, we believe our results can be generalized in the following way.


\begin{conj}
    Theorems~\ref{thm:alpha} and \ref{thm:beta} hold for $(c,r)$-bounded graphs with $1\le c\le r$.
\end{conj}

An issue that one runs into by trying to mimic our current proof in this setting is that we now need to choose how to embed all of $T^*$ and $S$ such that they contain no saturated subgraphs in $G$.  When $|T^*|=1$ the only such subgraphs are $T$-stars, but for $|T^*|>1$ we need to handle more general complete bipartite graphs. In particular, avoiding saturated $K_{t', s'}$ with the $t'$-set a subset of $T^*$ becomes difficult. 


\bibliographystyle{abbrv}

\bibliography{refs}

\begin{thebibliography}{10}

\bibitem{alon2003turan}
N.~Alon, M.~Krivelevich, and B.~Sudakov.
\newblock Tur{\'a}n numbers of bipartite graphs and related {R}amsey-type questions.
\newblock {\em Combinatorics, Probability and Computing}, 12(5-6):477--494, 2003.

\bibitem{balogh2015independent}
J.~Balogh, R.~Morris, and W.~Samotij.
\newblock Independent sets in hypergraphs.
\newblock {\em Journal of the American Mathematical Society}, 28(3):669--709, 2015.

\bibitem{collares2016maximum}
M.~Collares and R.~Morris.
\newblock Maximum-size antichains in random set-systems.
\newblock {\em Random Structures \& Algorithms}, 49(2):308--321, 2016.

\bibitem{conlon2010approximate}
D.~Conlon, J.~Fox, and B.~Sudakov.
\newblock An approximate version of {S}idorenko’s conjecture.
\newblock {\em Geometric and Functional Analysis}, 20:1354--1366, 2010.

\bibitem{conlon2016combinatorial}
D.~Conlon and W.~T. Gowers.
\newblock Combinatorial theorems in sparse random sets.
\newblock {\em Annals of Mathematics}, pages 367--454, 2016.

\bibitem{conlon2018some}
D.~Conlon, J.~H. Kim, C.~Lee, and J.~Lee.
\newblock Some advances on {S}idorenko's conjecture.
\newblock {\em Journal of the London Mathematical Society}, 98(3):593--608, 2018.

\bibitem{conlon2017finite}
D.~Conlon and J.~Lee.
\newblock Finite reflection groups and graph norms.
\newblock {\em Advances in Mathematics}, 315:130--165, 2017.

\bibitem{conlon2021extremal}
D.~Conlon and J.~Lee.
\newblock On the extremal number of subdivisions.
\newblock {\em International Mathematics Research Notices}, 2021(12):9122--9145, 2021.

\bibitem{coregliano2021biregularity}
L.~N. Coregliano and A.~A. Razborov.
\newblock Biregularity in {S}idorenko's conjecture.
\newblock {\em arXiv preprint arXiv:2108.06599}, 2021.

\bibitem{fox2017local}
J.~Fox and F.~Wei.
\newblock On the local approach to {S}idorenko's conjecture.
\newblock {\em Electronic Notes in Discrete Mathematics}, 61:459--465, 2017.

\bibitem{furedi1991turan}
Z.~F{\"u}redi.
\newblock On a {T}ur{\'a}n type problem of {E}rd{\H{o}}s.
\newblock {\em Combinatorica}, 11(1):75--79, 1991.

\bibitem{hatami2010graph}
H.~Hatami.
\newblock Graph norms and {S}idorenko’s conjecture.
\newblock {\em Israel Journal of Mathematics}, 175:125--150, 2010.

\bibitem{haxell1995turan}
P.~E. Haxell, Y.~Kohayakawa, and T.~Luczak.
\newblock {T}ur{\'a}n' s extremal problem in random graphs: Forbidding even cycles.
\newblock {\em Journal of Combinatorial Theory, Series B}, 64(2):273--287, 1995.

\bibitem{jiang2022balanced}
T.~Jiang and S.~Longbrake.
\newblock Balanced supersaturation and {Tur}{\' a}n numbers in random graphs.
\newblock {\em Advances in Combinatorics}, jul 15 2024.

\bibitem{jiang2024number}
T.~Jiang and S.~Longbrake.
\newblock On the number of {$H$}-free hypergraphs.
\newblock {\em Forum of Math, Sigma}, page paper e20, February 2026.

\bibitem{kim2016two}
J.~H. Kim, C.~Lee, and J.~Lee.
\newblock Two approaches to {S}idorenko’s conjecture.
\newblock {\em Transactions of the American Mathematical Society}, 368(7):5057--5074, 2016.

\bibitem{kohayakawa1998extremal}
Y.~Kohayakawa, B.~Kreuter, and A.~Steger.
\newblock An extremal problem for random graphs and the number of graphs with large even-girth.
\newblock {\em Combinatorica}, 18(1):101--120, 1998.

\bibitem{li2011logarithimic}
J.~Li and B.~Szegedy.
\newblock On the logarithimic calculus and {S}idorenko's conjecture.
\newblock {\em arXiv preprint arXiv:1107.1153}, 2011.

\bibitem{lovasz2011subgraph}
L.~Lov{\'a}sz.
\newblock Subgraph densities in signed graphons and the local {S}imonovits--{S}idorenko conjecture.
\newblock {\em The Electronic Journal of Combinatorics}, pages P127--P127, 2011.

\bibitem{mckinley2023random}
G.~McKinley and S.~Spiro.
\newblock The random {T}ur\'an problem for theta graphs.
\newblock {\em arXiv preprint arXiv:2305.16550}, 2023.

\bibitem{morris2016number}
R.~Morris and D.~Saxton.
\newblock The number of ${C}_{2\ell}$-free graphs.
\newblock {\em Advances in Mathematics}, 298:534--580, 2016.

\bibitem{mubayi2023random}
D.~Mubayi and L.~Yepremyan.
\newblock On the random {T}ur\'an number of linear cycles.
\newblock {\em arXiv preprint arXiv:2304.15003}, 2023.

\bibitem{nie2023random}
J.~Nie.
\newblock Random {T}ur\'an theorem for expansions of spanning subgraphs of tight trees.
\newblock {\em arXiv preprint arXiv:2305.04193}, 2023.

\bibitem{nie2023tur}
J.~Nie.
\newblock {T}ur{\'a}n theorems for even cycles in random hypergraph.
\newblock {\em Journal of Combinatorial Theory, Series B}, 167:23--54, 2024.

\bibitem{nie202X}
J.~Nie and S.~Spiro.
\newblock Random {T}ur\'an problems for ${K}_{s,t}$ expansions.
\newblock {\em In Preparation}.

\bibitem{nie2023sidorenko}
J.~Nie and S.~Spiro.
\newblock {S}idorenko hypergraphs and random {T}ur\'an numbers.
\newblock {\em arXiv preprint arXiv:2309.12873}, 2023.

\bibitem{nie2024random}
J.~Nie and S.~Spiro.
\newblock Random {T}ur\'an problems for hypergraph expansions.
\newblock {\em arXiv preprint arXiv:2408.03406}, 2024.

\bibitem{saxton2015hypergraph}
D.~Saxton and A.~Thomason.
\newblock Hypergraph containers.
\newblock {\em Inventiones mathematicae}, 201(3):925--992, 2015.

\bibitem{schacht2016extremal}
M.~Schacht.
\newblock Extremal results for random discrete structures.
\newblock {\em Annals of Mathematics}, pages 333--365, 2016.

\bibitem{sidorenko1986extremal}
A.~Sidorenko.
\newblock Extremal problems in graph theory and inequalities in functional analysis.
\newblock In {\em Proc. Soviet Seminar on Discrete Math. Appl., Moscow Univ. Press}, pages 99--105, 1986.

\bibitem{sidorenko1991inequalities}
A.~F. Sidorenko.
\newblock Inequalities for functionals generated by bipartite graphs.
\newblock {\em Diskret. Mat.}, 3(3):50--65, 1991.

\bibitem{spiro2024random}
S.~Spiro.
\newblock Random polynomial graphs for random {T}ur{\'a}n problems.
\newblock {\em Journal of Graph Theory}, 105(2):192--208, 2024.

\bibitem{spiro2021relative}
S.~Spiro and J.~Verstra{\"e}te.
\newblock Relative {T}ur{\'a}n problems for uniform hypergraphs.
\newblock {\em SIAM Journal on Discrete Mathematics}, 35(3):2170--2191, 2021.

\bibitem{szegedy2014information}
B.~Szegedy.
\newblock An information theoretic approach to {S}idorenko's conjecture.
\newblock {\em arXiv preprint arXiv:1406.6738}, 2014.

\end{thebibliography}

\newpage

\appendix

\section{Appendix: Using Containers}

Here we prove \Cref{Lemma:General Random Turan for Large p}, the statement of which we recall below.  We emphasize that this proof is for the most part a straightforward generalization of Theorems 6.1 and 7.6 of \cite{morris2016number}, and as such our exposition will be somewhat terse is various places.  

\begin{prop}
Let $F$ be a graph.  If there exists a  $\alpha, \beta, C>0$ and positive functions $q_0(n)$ and $\tau=\tau(n,q)$ such that 
\begin{itemize}
    \item[(a)] $F$ is $(q_0(n)n^2,\gamma , \tau)$-balanced, and
    \item[(b)] For all sufficiently large $n$ and $q\ge q_0(n)$, we have $\tau(n,q) = C\max \{n^{2 - \alpha}, q^{1 - \beta}n \}. $
\end{itemize}
then there exists $C'\ge 0$ such that for all sufficiently large $n$, $q\ge q_0(n)$, and $0<p\le 1$ with $pqn^2\rightarrow\infty$ as $n\rightarrow \infty$, we have a.a.s.
$$
\ex(G_{n,p},F)\le \max\l\{C'pqn^2,n^{2- \alpha}(\log n)^{C'}, q^{1- \beta}n, p^{1 - \frac{1}{\beta}}n^{2 - \frac{1}{\beta}}\r\}.
$$
\end{prop}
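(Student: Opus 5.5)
We will follow the Morris--Saxton strategy (Theorems 6.1 and 7.6 of \cite{morris2016number}): build containers by iterating the hypergraph container lemma of Balogh--Morris--Samotij / Saxton--Thomason with explicit fingerprints, and then take a union bound over containers and fingerprints. The first step is a \emph{single container step}. Let $G$ have $m=qn^2\ge q_0(n)n^2$ edges, and let $\mathcal{H}$ be the hypergraph supplied by (a). The container lemma with codegree parameter $\tau(n,m)/m$ (this is exactly what \eqref{equation:Delta} controls, with $\gamma$ a constant) gives, for every $F$-free $I\subseteq G$, a fingerprint $S\subseteq I$ with $|S|\le c\,\tau(n,m)$ and a container $C(S)\supseteq I$ with $e(C(S))\le (1-c')m$. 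The constants $c,c'>0$ depend only on $F$ and $\gamma$. Crucially, $C(S)$ depends only on $S$.

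The second step is to \emph{iterate} from $G=K_n$. At step $j$ the current container has $m_j\approx (1-c')^j n^2$ edges, and the fingerprint added has size $O(\tau(n,m_j))=O(\max\{n^{2-\alpha},q_j^{1-\beta}n\})$, where $q_j=m_j/n^2$. We stop when $m_j\le qn^2$. The number of steps is $O(\log(1/q))=O(\log n)$. The total fingerprint size is bounded by a sum of two parts. The first is $O(n^{2-\alpha}\log n)$. The second is a geometric sum $\sum_j q_j^{1-\beta}n$. If $\beta>1$ this sum is dominated by its last term and gives $O(q^{1-\beta}n)$; if $\beta\le 1$ it is dominated by its first term, of order $n$. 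The output is a family $\mathcal{C}$ of containers, each with at most $qn^2$ edges, such that every $F$-free graph lies in some $C\in\mathcal{C}$. Each container is determined by a fingerprint sequence $S=S_1\cup\dots\cup S_k\subseteq I$ of total size $s\le s_0:=C_1(n^{2-\alpha}\log n+q^{1-\beta}n)$. We also keep track of the individual sizes, so that the count of fingerprints with fingerprint size $s$ is at most $\binom{n^2}{s}(\log n)^{O(1)}$.

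The third step is the \emph{union bound}, done with fingerprints rather than container counts; this is what saves the log factors. Suppose $G_{n,p}$ contains an $F$-free subgraph $I$ with $e(I)=t$. Then $S\subseteq I\subseteq G_{n,p}$ and $I\setminus S\subseteq C(S)\cap G_{n,p}$. Hence
\[\Pr[\exists I]\le \sum_{s\le s_0}\binom{n^2}{s}p^s\,\Pr\bigl[\mathrm{Bin}(qn^2,p)\ge t-s\bigr].\]
We bound $\binom{n^2}{s}p^s\le (epn^2/s)^s$. If $t\ge C'pqn^2$ and $t\ge 2s$, Chernoff gives the second factor at most $e^{-ct}$, and then the sum is $o(1)$ provided $t\gg s\log(epn^2/s)$. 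Choosing $t$ as the maximum in the statement handles this for the $n^{2-\alpha}(\log n)^{C'}$ part of $s_0$.

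The main obstacle is the $q^{1-\beta}n$ part. There the logarithm $\log(pn^2/s)$ cannot be absorbed by a polylog factor in $t$ without losing the claimed tightness. The fix, as in Morris--Saxton, is to not iterate all the way down to $q$ in one go. We stop the iteration at a density $q'\ge q$ chosen as a function of $p$, so that the fingerprint term $q'^{1-\beta}n$ balances $pq'n^2$; this gives $q'\asymp(pn)^{-1/\beta}$. Then $pq'n^2=p^{1-1/\beta}n^{2-1/\beta}$, which is the fourth term of the bound. At this scale $pn^2/s$ is only polynomial in $q'/p$-type ratios, and we expect the bookkeeping of $\log(epn^2/s)$ against $t$ to be the delicate calculation. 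If needed, we handle it by grouping steps dyadically in $m_j$ and applying Chernoff separately per group. Finally, we take $C'$ large to cover all four regimes, using $pqn^2\to\infty$ to make Chernoff effective.
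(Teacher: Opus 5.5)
Your framework (iterate the container lemma from $K_n$ down to density $q$, record the fingerprint sequence, then take a union bound with a binomial tail on the final container) is the same as the paper's. However, the step you yourself flag as the main obstacle is left unresolved, and neither of your suggested fixes resolves it.

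As long as fingerprints of total size $s$ are counted as subsets of $E(K_n)$, the entropy cost is $\binom{n^2}{s}p^s\approx\exp\bigl(s\log(epn^2/s)\bigr)$. Stopping at $q'\asymp(pn)^{-1/\beta}$ gives $s\asymp q'^{1-\beta}n\asymp p^{1-1/\beta}n^{2-1/\beta}$ and $pn^2/s\asymp(pn)^{1/\beta}\asymp 1/q'$. This ratio is polynomial in $n$, so in general $\log(epn^2/s)=\Theta(\log n)$. Balancing $s\log(pn^2/s)$ against $pq'n^2$ correctly forces $q'\asymp(pn/\log n)^{-1/\beta}$, which yields only $p^{1-1/\beta}n^{2-1/\beta}(\log n)^{1/\beta}$. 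That is exactly the polylogarithmic loss this proposition exists to remove; with such a loss, \Cref{Lemma:General Random Turan} would already suffice. Applying Chernoff separately to dyadic groups does not help either, because the loss sits in the number of fingerprints, not in the binomial tail.

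The missing idea is to count each fingerprint inside the container it is drawn from, and to use the shape of $\tau$ to tie that container's size to the fingerprint's size. At step $j$, $S_j$ is a subset of a container with $q_jn^2$ edges. When $\tau(n,q_jn^2)=Cq_j^{1-\beta}n$, the constraint $s_j\le Cq_j^{1-\beta}n$ (with $\beta>1$) forces $q_j\le(Cn/s_j)^{1/(\beta-1)}$, so
\[\binom{q_jn^2}{s_j}p^{s_j}\le\Bigl(\frac{epq_jn^2}{s_j}\Bigr)^{s_j}\le\Bigl(\frac{C'p^{1-1/\beta}n^{2-1/\beta}}{s_j}\Bigr)^{\frac{\beta}{\beta-1}s_j}.\]
Steps with $\tau=Cn^{2-\alpha}$ cost only $\exp(O(n^{2-\alpha}\log n))$ each.

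Naively multiplying these per-step bounds over the $\Theta(\log n)$ steps would still lose a logarithm. The paper instead aggregates them with the Collares--Morris inequality (\Cref{NetoMorris}). Since the $s_j$ are dominated by a geometric sequence with top term $O(q^{1-\beta}n)$, one has $\sum_j s_j\log s_j\ge t\log t-O(q^{1-\beta}n)$ for $t=\sum_j s_j$. This bounds the $p^t$-weighted number of fingerprint sequences of total size $t$ by
\[\Bigl(\frac{C'p^{1-1/\beta}n^{2-1/\beta}}{t}\Bigr)^{\frac{\beta}{\beta-1}t}\exp\bigl(O(q^{1-\beta}n+n^{2-\alpha}\log^2 n)\bigr).\]
Maximizing over $t$ gives $\exp\bigl(O(p^{1-1/\beta}n^{2-1/\beta}+q^{1-\beta}n+n^{2-\alpha}\log^2 n)\bigr)$. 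The tail $\bigl(eqpn^2/(m-t)\bigr)^{m-t}$ beats this once $m$ is a large constant times the maximum in the statement.

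In particular, no $p$-dependent stopping density is needed: one iterates down to the given $q$, and the term $p^{1-1/\beta}n^{2-1/\beta}$ comes out of the optimization over $t$.
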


Our starting point is a standard lemma from the method of hypergraph containers which was independently developed by Baloh, Morris, and Samotij (Theorem 2.2 in \cite{balogh2015independent}) as well as  Saxton and Thomason (Theorem 3.4 in \cite{saxton2015hypergraph}).  
Here and throughout $\P(S)$ denotes the power set of a set $S$, and given a hypergraph $\Hc$, we let $\Delta_i(\Hc) := \max_{J \subseteq V(\Hc), |J| = i} \deg_\Hc(J)$ and $\mathcal{I}(\c{H})$ denote the set of independent sets of $\c{H}$. 

\begin{lem}[Balogh-Morris-Samotij \cite{balogh2015independent} and Saxton-Thomason \cite{saxton2015hypergraph}]\label{containerlemma}
For every $t \in \NN$  there exists a $\delta > 0$ such that the following always holds. Let $\cH$ be a $t$-uniform hypergraph on $N$ vertices and $\gamma,\tau$ real numbers such that for all $1 \leq i \leq t$, $$\Delta_i(\cH) \leq \gamma \left( \frac{\tau}{v(\cH)}\right)^{i - 1} \frac{\gamma e(\cH)}{v(\cH)}.$$ Then there exists a collection $\C$ of subsets of $V(\cH)$ and function $\iota : \I(\cH) \rightarrow \P(V(\cH))$ and $h: \P(V(\cH)) \rightarrow \c{C}$ satisfying the following:
\begin{enumerate}
\item For every $I \in \I(\cH)$, $|S(I)| \leq (t - 1)\tau$ and $\iota(I) \subseteq I \subseteq h(\iota(I))$. 
\item For every $C \in \C$, $|V(C)| \leq v(\cH) - \frac{\delta}{\gamma}  v(\cH)$. 
\end{enumerate}
\end{lem}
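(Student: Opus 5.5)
The plan is to deduce the statement from the container theorem of Balogh, Morris and Samotij \cite[Theorem 2.2]{balogh2015independent}, equivalently Saxton and Thomason \cite[Theorem 3.4]{saxton2015hypergraph}, rather than reprove containers from scratch. Those results are stated with the codegree hypothesis
\[\Delta_i(\Hc)\le c\left(\frac{\tau}{v(\Hc)}\right)^{i-1}\frac{e(\Hc)}{v(\Hc)}\]
for an absolute constant $c$, and give containers of size at most $(1-\delta_0)v(\Hc)$. Our hypothesis carries an extra factor $\gamma^2$, so the main work is bookkeeping: absorbing $\gamma$ into either $\tau$ or the container-size loss. Throughout, the set written $S(I)$ in item 1 is read as $\iota(I)$, the fingerprint.

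First I would fix notation: $N=v(\Hc)$ and $d=t\,e(\Hc)/N$ the average degree. The hypothesis at $i=1$ gives $\Delta_1\le \gamma^2 d/t$. I would then run the standard greedy ``scythe'' algorithm directly, since that is where $\gamma$ enters cleanly. Given an independent set $I$, one builds $\iota(I)$ in $t-1$ rounds, going from $t$-uniform to $2$-uniform and finally $1$-uniform auxiliary hypergraphs. At round $j$ one maintains a $(t-j+1)$-uniform hypergraph $\Hc_j$ of link edges. Vertices of $I$ are examined in max-degree order with respect to $\Hc_j$. A vertex is added to the fingerprint if its degree exceeds a threshold of order $(\tau/N)^{t-j}\, e(\Hc)/N$; a vertex below the threshold, or a vertex outside $I$, is discarded from the available set.

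The two required properties then come from the following estimates.

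\emph{Fingerprint size.} Each fingerprint vertex at round $j$ contributes many edges to $\Hc_{j+1}$. The codegree bound $\Delta_{t-j+1}$ caps how many edges of $\Hc_{j+1}$ can pile onto a single set. A double count shows each round adds at most $\tau$ vertices, so $|\iota(I)|\le (t-1)\tau$.

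\emph{Container size.} One shows that the algorithm's output depends only on $\iota(I)$, which defines $h$, and that the final available set $h(\iota(I))$ misses a $\delta/\gamma$ fraction of vertices. The argument is that either some round produced a $j$-uniform hypergraph with about $\gamma^{-1}e(\Hc)(\tau/N)^{t-j}$ edges while having small max degree, or else the discarded low-degree vertices carry a constant fraction of all edges of $\Hc$. In the latter case, since $\Delta_1\le\gamma^2 d$, those discarded vertices number at least about $N/\gamma$ times a constant depending on $t$. The factor $\gamma^{-1}$ appears precisely because the max degree exceeds the average degree by at most $\gamma^2$. Splitting the loss between the fingerprint bound (one factor of $\gamma$) and the container bound (the other) gives $\delta/\gamma$ rather than $\delta/\gamma^2$.

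The main obstacle is this last case analysis: tracking constants through the $t-1$ rounds so that only a single factor $\gamma^{-1}$ is lost in item 2 while item 1 stays at $(t-1)\tau$ with no $\gamma$ dependence. If the direct accounting fails, I would fall back on invoking \cite[Theorem 2.2]{balogh2015independent} with $\tau$ replaced by $\gamma\tau$, and then apply the resulting container step iteratively. Each step removes an $\Omega(1)$ fraction of edges, and after $O(\log\gamma)$ steps this yields $\delta/\gamma$ loss; the price is a fingerprint bound weakened by polylogarithmic factors, which would suffice for the application in \Cref{Lemma:General Random Turan for Large p} with $\gamma=C$ constant.
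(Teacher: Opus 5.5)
\textbf{The decisive step fails, and the literal statement you are trying to prove is false.} In your container-size analysis you conclude that, because $\Delta_1(\mathcal{H})\le\gamma^2 d$, the discarded vertices number about $N/\gamma$. But a constant fraction of the $e(\mathcal{H})=Nd/t$ edges, divided by a maximum degree as large as $\gamma^2 d/t$, gives only $\Omega(N/\gamma^2)$ vertices.

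Moving one factor of $\gamma$ into the fingerprint cannot repair this, and it would also contradict your claim that item 1 carries no $\gamma$. Item 1 forces every independent set into some container, so the largest container has at least $\alpha(\mathcal{H})$ vertices. The $i=1$ hypothesis does not involve $\tau$ at all, and it allows $\alpha(\mathcal{H})=(1-\gamma^{-2})N$.

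Concretely, let $t=2$, $N=\gamma^2$ with $\gamma\ge 2$, $\tau=2$, and $\mathcal{H}=K_{1,N-1}$. Then $\Delta_1(\mathcal{H})=N-1=\gamma^2e(\mathcal{H})/N$, and $\Delta_2(\mathcal{H})=1\le 2(1-\gamma^{-2})=\gamma^2(\tau/N)e(\mathcal{H})/N$, so the printed hypothesis holds. Yet the leaves form an independent set of size $(1-\gamma^{-2})N$, which exceeds $(1-\delta/\gamma)N$ as soon as $\gamma>1/\delta$. So the lemma with the hypothesis as printed is false. Neither your direct accounting nor your iterative fallback can prove it; the same example refutes the claim that $O(\log\gamma)$ container steps yield a $\delta/\gamma$ loss.

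\textbf{How to fix it.} The doubled $\gamma$ is evidently a typo; the paper gives no proof and only quotes the container theorems of Balogh--Morris--Samotij and Saxton--Thomason. The intended hypothesis is $\Delta_i(\mathcal{H})\le\gamma(\tau/v(\mathcal{H}))^{i-1}e(\mathcal{H})/v(\mathcal{H})$, which matches the definition of $(M,\gamma,\tau)$-balanced through which the lemma is applied.

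Under that reading your mechanism produces exactly one factor of $\gamma$, and no splitting is needed. If the vertices outside a container meet a constant fraction $c_t$ of the edges, then, since each vertex lies in at most $\Delta_1(\mathcal{H})\le\gamma e(\mathcal{H})/v(\mathcal{H})$ edges, at least $c_t v(\mathcal{H})/\gamma$ vertices are missing.

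Two smaller points. First, in the scythe algorithm the bound $(t-1)\tau$ on the fingerprint is not the output of a double count. It is imposed by stopping each of the $t-1$ rounds once $\tau$ vertices have been selected, and the codegree hypotheses are used to analyse what happens when that cap is reached. Second, your fallback weakens the fingerprint bound in item 1, so even if it worked it would not prove the lemma as stated.
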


We also need a purely arithmetical lemma due to Neto and Morris.

\begin{lem}[Lemma 4.3 in \cite{collares2016maximum}]\label{NetoMorris}
Let $M > 0$, $s > 0$, and $0 < \delta < 1$. For any finite sequence $a_1, \dots, a_m$ of real numbers summing to $s$ such that $1 \leq a_j \leq (1 - \delta)^j M$ for each $j \in [m]$, we have
$$s \log (s) \leq \sum_{j = 1}^m a_j \log a_j + O\left(\frac{M}{\delta^2}\right). $$
\end{lem}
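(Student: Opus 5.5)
The plan is to rewrite the difference as
\[
s\log s-\sum_{j=1}^m a_j\log a_j=\sum_{j=1}^m a_j\log(s/a_j),
\]
and bound each term using the elementary function $g(x)=x\log(s/x)$. On $(0,s]$, $g$ is increasing for $x\le s/e$ and satisfies $g(x)\le s/e$ everywhere. Put $b_j:=(1-\delta)^jM$, so $a_j\le b_j$. Two preliminary facts will be used throughout.
\begin{itemize}
\item Summing the geometric bounds gives $s\le\sum_j b_j\le M/\delta$.
\item The hypothesis $a_j\ge 1$ forces $m\le s$.
\end{itemize}
I split the indices into two classes according to whether $b_j\le s/e$ or $b_j>s/e$.

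For the indices with $b_j\le s/e$, monotonicity of $g$ on $(0,s/e]$ gives $a_j\log(s/a_j)\le b_j\log(s/b_j)$. Using $s\le M/\delta$,
\[
b_j\log(s/b_j)\le b_j\log(1/\delta)+b_j\, j\log\tfrac{1}{1-\delta}.
\]
Summing the first part gives at most $(M/\delta)\log(1/\delta)=O(M/\delta^2)$. For the second part, write $t=1-\delta$. Then
\[
M\sum_j j t^j\log(1/t)\le M\,\frac{t\log(1/t)}{(1-t)^2}\le \frac{M}{\delta},
\]
using $\log(1/t)\le (1-t)/t$. This route needs no case distinction on the size of $\delta$.

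For the indices with $b_j>s/e$, each term is at most $g(a_j)\le s/e$. Such $j$ satisfy $(1-\delta)^j>s/(eM)$, hence $j<\log(eM/s)/\delta$ via $\log\frac{1}{1-\delta}\ge\delta$. The total contribution is therefore at most $\frac{1}{e\delta}\,s\log(eM/s)$. Writing $x=s/M\le 1/\delta$, I bound $s\log(eM/s)=M\,x\log(e/x)$. For $x\le1$ this is at most $M$. For $1<x\le 1/\delta$ it is at most $Mx\le M/\delta$. Either way the contribution is $O(M/\delta^2)$. If $s>eM$, then no index has $b_j>s/e$, so this class is empty.

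Adding the two classes gives the claimed $O(M/\delta^2)$ bound. The only delicate step is the second class, because the number of large indices is not bounded a priori. The point is to bound it by $\log(eM/s)/\delta$ and then check that $s\log(eM/s)$ stays $O(M/\delta)$ over the admissible range $s\le M/\delta$. I expect that bookkeeping to be the main care point, while the first class is routine geometric-series summation.
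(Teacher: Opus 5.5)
The paper gives no proof of this lemma. It is quoted from Collares Neto and Morris and used only as a black box in the appendix, so there is no internal argument to compare against. Your proof is correct and self-contained.

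The decomposition is the right one. You write the deficit as $\sum_j a_j\log(s/a_j)$ and cut at $b_j=s/e$, where $g(x)=x\log(s/x)$ peaks. Below the cut, monotonicity lets you replace $a_j$ by $b_j$ and sum geometric series, using $s\le M/\delta$ and $\log\frac1t\le\frac{1-t}{t}$. Above the cut, each term is at most $s/e$ and every such index satisfies $j<\log(eM/s)/\delta$.

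Two small remarks.
\begin{enumerate}
\item Neither the hypothesis $a_j\ge 1$ nor the consequence $m\le s$ is ever used. Your argument only needs $a_j>0$, so it proves a slightly more general statement.
\item The case split at $x=s/M=1$ in the large-index class is unnecessary, since $x\log(e/x)=x(1-\log x)\le 1$ for all $x>0$. So that class contributes at most $M/(e\delta)$, and the only contribution exceeding $O(M/\delta)$ is $\frac{M}{\delta}\log\frac1\delta$ from the small-index class. Your argument therefore gives the sharper bound $O\bigl(\frac{M}{\delta}(1+\log\frac1\delta)\bigr)$. This is the true order for small $\delta$, as taking $a_j=b_j$ with $M$ large shows.
\end{enumerate}
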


Before getting into the meat of our proof, let us briefly sketch  the high-level idea of our argument.  Our goal will be to construct a set of container $\c{C}$, i.e.\ a set of $n$-vertex graphs such that every $n$-vertex $F$-free graph is a subgraph of some $G\in \c{C}$.  We moreover will want to impose that every element of $\c{C}$ has at most some given size $q_0n^2$.  To accomplish this, we initially start with $\c{C}=\{K_n\}$ which is trivially a set of containers.  Iteratively given some set of containers $\c{C}$, if there exists a graph $G\in \c{C}$ with more than $q_0n^2$ edges, then we apply the container lemma to $G$ (or more precisely, to the hypergraph which encodes copies of $F$ in $G$) which gives a set of graphs $\c{C}'$ each of size at most $(1-\del/\gam) e(G)$ such that $\c{C}'\cup \c{C}\sm \{G\}$ is a set of containers.  Repeating this process at most $O(\log n)$ times gives a set of containers each of which has size at most $q_0n^2$, and moreover the number of such containers can be shown to be relatively small by using \Cref{NetoMorris}.

The argument above is all that is needed to get our main result up to logarithmic factors, and to get rid of these factors we need to be a bit more careful in our analysis.  Specifically, each container $G\in \c{C}$ that we ultimately end up with comes about from a repeated sequence of applications of the container lemma, and hence to each container $G$ there is some sequence of fingerprints (i.e.\ images of the $S$ function from the container lemma) associated to it.  Ultimately we need to (iteratively) keep track of what this sequence of fingerprints is for each container, which will complicate our notation somewhat.

\newcommand{\Forb}{\mathrm{Forb}}

\begin{prop}\label{appendixproposition}
    Let $F$ be a graph and let $\Forb(n,F)$ denote the set of $n$-vertex $F$-free graphs.  If there exists a  $C>0, \beta > 1 > \alpha > 0$ and positive functions $M=M(n)$ and $\tau=\tau(n,q)$ such that 
\begin{itemize}
    \item[(a)] $F$ is $(M,\gamma , \tau)$-balanced, and
    \item[(b)] For all sufficiently large $n$ and $q\ge M$, we have $\tau(n,qn^2) = C\max \{n^{2 - \alpha}, q^{1 - \beta}n \}. $
\end{itemize}
then there exists a constant $C'>0$ such that for all $n$ and $q_0\ge M(n)$, there exists a set $\c{S}$ and functions $f:\c{S}\to \c{P}(E(K_n))$ and $g:\Forb(n,F)\to \c{S}$ with the following properties:
\begin{itemize}
    \item[(i)] Each element $\bold{S}\in \c{S}$ is a sequence of edge-disjoint subgraphs of $K_n$ such that $f(\bold{S})$ is edge-disjoint from each of the graphs $\bold{S}_i$.
    \item[(ii)] For each $H\in \Forb(n,F)$, we have
    \[\bigcup_i g(H)_i\sub H \sub f(g(H))\cup \bigcup_i g(H)_i.\]
    That is, every graph in the sequence $g(H)$ is a subgraph of $H$ and $H$ is contained in the union of these subgraphs and $f(g(H))$.
    \item[(iii)] We have $e(f(\bold{S}))\le q_0 n^2$ for all $\bold{S}\in \c{S}$
    \item[(iv)] For every integer $t$, the number of $\bold{S}\in \c{S}$ with $\sum_i e(\bold{S}_i)=t$ is at most 
    $$\left( \frac{C'n^{2 - \frac{1}{\beta}}}{t
    }\right)^{\left({\frac{\beta}{\beta - 1}}\right)t} \cdot  \exp(C'q_0^{1 - \beta}n+ C'\log(n)^2n^{ 2- \alpha} )$$
\end{itemize}
\end{prop}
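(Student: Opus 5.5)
We build the containers iteratively, following the sketch given just before the statement and the proofs of Theorems 6.1 and 7.6 of \cite{morris2016number}. We maintain a family of pairs $(\bold{S},G)$, where $\bold{S}$ is a sequence of edge-disjoint fingerprint graphs and $G$ is a graph edge-disjoint from each $\bold{S}_i$. These satisfy the invariant that every $H\in\Forb(n,F)$ is assigned to some pair with $\bigcup_i\bold{S}_i\sub H\sub G\cup\bigcup_i \bold{S}_i$. We start with the single pair $((),K_n)$. At each step, any pair with $e(G)>q_0n^2$ is refined. Since $q_0\ge M(n)$, hypothesis (a) gives a hypergraph $\c{H}_G$ of copies of $F$ in $G$ satisfying the codegree conditions of \Cref{containerlemma} with $\tau=\tau(n,e(G))$. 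For each $H$ assigned to the pair, $H\cap G$ is an independent set of $\c{H}_G$, since $H$ is $F$-free.

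Apply \Cref{containerlemma} to $\c{H}_G$, obtaining the fingerprint $\iota(H\cap G)$ and the container $h(\iota(H\cap G))$. The new pair is $\bold{S}'=(\bold{S},\iota(H\cap G))$ and $G'=h(\iota(H\cap G))\sm \iota(H\cap G)$. Properties (i) and (ii) are preserved by construction, since the fingerprint is contained in $H$ and is removed from $G'$. Also $e(G')\le(1-\del/\gam)e(G)$. Hence after $j$ rounds every $G$ along a branch has at most $(1-\del/\gam)^j n^2$ edges. Since $\gam$ is constant, the process terminates after $O(\log n)$ rounds, with every terminal $G$ satisfying (iii). We then set $f(\bold{S})=G$, let $g(H)$ be the sequence attached to $H$'s terminal pair, and let $\c{S}$ be the set of terminal sequences.

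The main work is the counting in (iv). A sequence $\bold{S}$ with $t_j:=e(\bold{S}_j)$ is determined by its fingerprints, because each container is a function of the previous ones and the fingerprint. The fingerprint at step $j$ is an edge set of size $t_j\le e(F)\tau(n,e(G_j))$ inside $K_n$. The number of choices for it is therefore at most $\binom{n^2}{t_j}\le (en^2/t_j)^{t_j}$. Multiplying over $j$ and summing over compositions $t=\sum t_j$ bounds the count by $\exp\bigl(\sum_j t_j\log(en^2/t_j)+O(\log n)\log t\bigr)$. We then split the rounds in two:
\begin{itemize}
\item rounds where $\tau$ is the constant term $Cn^{2-\alpha}$;
\item rounds where $\tau$ is the term $Cq_j^{1-\beta}n$, with $q_j=e(G_j)/n^2$.
\end{itemize}
In the first kind, $t_j\le O(n^{2-\alpha})$ and there are $O(\log n)$ such rounds. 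Crudely bounding $t_j\log(n^2/t_j)$ by $O(n^{2-\alpha}\log n)$ per round gives the factor $\exp(C'(\log n)^2n^{2-\alpha})$.

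In the second kind, $q_j\ge (1-\del/\gam)^{-(J-j)}q_0$, where $J$ is the final round. So $t_j\le a_j:=C q_j^{1-\beta}n$ decays geometrically backwards from the last round, with maximum $M\approx C q_0^{1-\beta}n$. This is exactly the setting of \Cref{NetoMorris} (after reindexing and padding $t_j\ge1$). It gives $\sum_j t_j\log t_j\ge t\log t-O(q_0^{1-\beta}n)$, so $\sum t_j\log(n^2/t_j)\le t\log(n^2/t)+O(q_0^{1-\beta}n)$. This is the step I expect to be the main obstacle, since it is where the fingerprints matter and where we must obtain the exponent $\frac{\beta}{\beta-1}$ and base $n^{2-1/\beta}$ rather than $n^2$.

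The bound from \Cref{NetoMorris} alone gives base $n^2/t$, which is too weak. To improve it, we use that a fingerprint of size $t_j$ with $t_j\approx q_j^{1-\beta}n$ can only occur when $q_j\lesssim (t_j/n)^{-1/(\beta-1)}$. In that case $G_{j-1}$, the graph in which the fingerprint must lie, has only about $q_jn^2\lesssim n^2(t_j/n)^{-1/(\beta-1)}$ edges. We should therefore count with $\binom{e(G_{j-1})}{t_j}$ instead of $\binom{n^2}{t_j}$. Each factor then becomes $\bigl(Cn^{2}(t_j/n)^{-1/(\beta-1)}/t_j\bigr)^{t_j}=\bigl(C n^{(2\beta-1)/(\beta-1)}t_j^{-\beta/(\beta-1)}\bigr)^{t_j}$, which is exactly $\bigl(C n^{2-1/\beta}/t_j\bigr)^{\frac{\beta}{\beta-1}t_j}$. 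Applying \Cref{NetoMorris} to these exponents merges the per-round terms into $\bigl(C'n^{2-1/\beta}/t\bigr)^{\frac{\beta}{\beta-1}t}$, at the cost of $\exp(C'q_0^{1-\beta}n)$. Rounds in which $t_j$ is much smaller than $a_j$ are handled by monotonicity of $x\mapsto x\log(A/x)$ on the relevant range, or are absorbed into the error terms.

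Finally, the $O(\log n)^{O(\log n)}$ choices of the composition $(t_j)$ are absorbed into $\exp(C'(\log n)^2n^{2-\alpha})$, which completes (iv).
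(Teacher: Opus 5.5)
Your proposal is correct and follows essentially the same route as the paper. Both iterate the container lemma while recording fingerprints and bound the choices for each fingerprint by $\binom{e(G)}{t_j}$. In the $q^{1-\beta}n$ regime, the constraint $t_j\lesssim q_j^{1-\beta}n$ forces $e(G)\le q_jn^2\lesssim n^2(n/t_j)^{1/(\beta-1)}$, which gives the per-round factor $\bigl(C'n^{2-1/\beta}/t_j\bigr)^{\frac{\beta}{\beta-1}t_j}$; in the $n^{2-\alpha}$ regime you both use the crude $n^{2t_j}$ bound. You then merge the rounds with \Cref{NetoMorris}, using the geometric decay of $q_j^{1-\beta}n$ backwards from the last round, exactly as the paper does.

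One small simplification: $q_j\lesssim (n/t_j)^{1/(\beta-1)}$ holds in every round of the $q^{1-\beta}n$ regime, not only when $t_j\approx a_j$. So your preliminary $\binom{n^2}{t_j}$ count and the separate monotonicity argument for rounds with $t_j\ll a_j$ are unnecessary.
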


\begin{proof}
    Our proof strategy will be to iteratively build a sequence of objects $\c{S}_j,g_j,f_j$ satisfying (i) and (ii) along with the further properties: 
    \begin{itemize}
        \item[(iii')] We have $e(f(\bold{S}))\le \max\{q_0 n^2,(1-\del/\gam)^j n^2\}$ for all $\bold{S}\in \c{S}$.
        \item[(iv')] Let $\bold{s} \in \NN^{\leq j}$, and let $\c{S}_j(\bold{s})$ denote the set of $\bold{S}$ with $e(\bold{S}_i) = \bold{s}_i$, then 

        $$|\c{S}_j(\bold{s})|\le \prod_{i = 1}^{|\bold{s}|} \left( \frac{C'n^{2 - \frac{1}{\beta}}}{\bold{s}_i} \right)^{\left({\frac{\beta}{\beta - 1}}\right)\bold{s}_i}\cdot \exp(C'j \log(n)n^{2 - \alpha}).$$
        
    \end{itemize}
    

    To this end, let $\c{S}_0=\emptyset$, define $g_0:\Forb(n,F)\to \c{S}_0$ to be the unique function of this form, and define $f_0:\c{S}_0\to \c{P}(K_n)$ by $f_0(\emptyset)=K_n$.  It is immediate to check that these objects satisfy (i), (ii), (iii'), and (iv').

    Iteratively assume that we have constructed $\c{S}_j,g_j,f_j$ satisfying (i), (ii), (iii'), and (iv').  To construct the next iteration, we split $\c{S}_j$ into two sets, namely $\S_{j}^{\text{good}} = \{ \bold{S} \in \S_j : |f_j(\bold{S})| \leq q_0 n^2 \}$ and $\S_{j}^{\text{bad}} = \{ \bold{S} \in \S_j : |f_j(\bold{S})| > qn^2 \}$.  For each $\bold{S}\in \S_j^{\text{bad}}$, let $\tau_{\bold{S}}=\tau(n,e(f_j(\bold{S})))$.  By hypothesis of $F$ and $q_0\ge M(n)$, for each $\bold{S}\in \S_{j}^{\text{bad}}$ there exists a non-empty collection $\c{H}_{\bold{S}}$ of copies of $F$ in $f_j(\bold{S})$ satisfying
    \[\Del_i(\c{H}_{\bold{S}})\le \frac{\gam e(\c{H}_{\bold{S}})}{e(f_j(\bold{S}))}\left(\frac{\tau_{\bold{S}}}{e(f_j(\bold{S}))} \right)^{i-1}.\]
    By the container lemma, there exists a collection $\c{C}_{\bold{S}}$ of subgraphs of $f_j(\bold{S})$ together with functions 
    $\iota_{\bold{S}}$ from $F$-free subgraphs of $f_j(\bold{S})$ to $ \binom{f_j(\bold{S})}{ \leq \tau_\bold{S}}$, as well as a function $h_{\bold{S}}: \binom{f_j(\bold{S})}{ \leq \tau_\bold{S}}\to \c{C}_{\bold{S}}$.

    We now define $\c{S}_{j+1},g_{j+1}$, and $f_{j+1}$ as follows.  We define $\c{S}_{j+1}$ to include all of $\c{S}_{j}^{\text{good}}$, and for each $\bold{S}\in \c{S}_{j}^{\text{bad}}$ we add to $\c{S}_{j+1}$ every sequence obtained by concatenating an element of $\binom{f_j(\bold{S})}{ \tau_\bold{S}}$ to the end of $\bold{S}$.  Note that these sequences are still edge disjoint since $f_j(\bold{S})$ is edge-disjoint from each $\bold{S}_i$ by hypothesis of (i).  For each $H\in \Forb(n,F)$, we define $g_{j+1}(H)=g_j(H)$ if $g_j(H)\in \c{S}_{j}^{\text{good}}$, otherwise $g_{j+1}(H)$ is defined to be $g_j(H)$ concatenated with $\iota_{\bold{S}}(H)$, which is well-defined because (ii) implies that $H \setminus g_j(H)$ is an $F$-free subgraph of $f_j(\bold{S})$.  Finally, we define $f_{j+1}(\bold{S}')=f_j(\bold{S}')$ if $\bold{S}'\in \c{S}_{j}^{\text{good}}$ and otherwise if $\bold{S}'$ consists of some $\bold{S}\in \S_j^{\text{bad}}$ and another set $S\in \binom{f_j(\bold{S})}{ \tau_\bold{S}}$, then we define $f_{j+1}(\bold{S}')=h_{\bold{S}}(S)\sm S$.

    \begin{claim}
        These definitions satisfy (i), (ii), (iii'), and (iv').
    \end{claim}
    \begin{proof}
        The first three conditions are relatively straightforward to verify after unwinding the definitions, so we focus our attention on (iv').  
        Observe that for a fixed $\bold{s} \in \NN^{\leq j + 1}$, the only one for whom $|\c{S}_{j + 1}(\bold{s})|> |\c{S}_{j}(\bold{s})|$ 
        is such that $|\bold{s}| = j + 1$. In this case, letting $\bold{s'}$ be the subsequence of $\bold{s}$ formed by the first $j$ coordinates
        \begin{equation*}
            |\S_{j + 1}(\bold{s})| \leq |\S_{j}(\bold{s}')| \cdot \sum_{0\le q\le 1: qn^2\in \N; \bold{s}_{j + 1} \leq \tau(n, qn^2)} \binom{qn^2}{\bold{s}_{j + 1}}
        \end{equation*}
        since every sequence in $\c{S}_{j+1}(\bold{s})$ is  comes from concatenating some $\bold{S}\in \c{S}_j(\bold{s}')$ with $\bold{s}_{j + 1}$ edges from the graph $G_{\bold{S}}$ which has some $qn^2$ edges.
    %

    First consider the case that the maximum above is achieved by some $q$ with $\tau(n,qn^2)=C n^{2-\al}$.  In this case we trivially have ${q n^2\choose s_j}\le (n^2)^{s_j}\le \exp(2C n^{2-\alpha} \log(n))$, which in total gives the desired bound.  Otherwise if $\tau(n,qn^2)=C q^{1-\beta} n$ then we have $\bold{s}_j\le C q^{1-\beta} n$ and hence $q\leq (\bold{s}_j/Cn)^{1/(1-\beta)}$, implying that
    \begin{align*}
       {q n^2\choose \bold{s}_j}&\le  \left( \frac{ e q n^2}{\bold{s}_j}\right)^{\bold{s}_j} \\
       &\leq \left( C' \frac{n^{2 - \frac{1}{1 - \beta}}}{\bold{s}_j^{1 - \frac{1}{1 - \beta}}}\right)^{\bold{s}_j}\\
       &=  \left( C'\frac{n^{2 - \frac{1}{\beta}}}{\bold{s}_j}\right)^{\frac{\beta}{\beta - 1}\bold{s}_j}
    \end{align*}
    This shows the construction satisfies (iv').

   \end{proof}
   By (iii'), for for some $j'=\Theta(\log n)$ the objects $\S_{j'},g_{j'},f_{j'}$ will satisfy condition (iii). By (iv') and (iii') these objects will further satisfy (iv). Indeed, there are no more than $n^{2 n }$ choices for $\bold{s}$ which satisfy $\sum_{i = 1}^{|\bold{s}|}\bold{s}_i = t$. Furthermore, for each such $\bold{s}$, we have that 
   \begin{align*}
       |\S_{j'}(\bold{s})| &\leq\prod_{i = 1}^{|\bold{s}|} \left( \frac{C'n^{2 - \frac{1}{\beta}}}{\bold{s}_i} \right)^{\left({\frac{\beta}{\beta - 1}}\right)\bold{s}_i}\cdot \exp(C'j \log(n)n^{2 - \alpha}).\\
       &= \exp\left(\sum_{i = 1}^{|\bold{s}|} \log\left( \frac{C'n^{2 - \frac{1}{\beta}}}{\bold{s}_i} \right) {\left({\frac{\beta}{\beta - 1}}\right)\bold{s}_i + C'j \log(n)n^{2 - \alpha}} \right)\\
       &\leq \exp\left(t \log\left( \frac{C'n^{2 - \frac{1}{\beta}}}{t} \right) {\left({\frac{\beta}{\beta - 1}}\right)\bold{s}_i + C'\log(n)^2n^{2 - \alpha}} + C'q_0^{1- \beta}n\right)
   \end{align*}
   where we apply \Cref{NetoMorris} using the fact that $\bold{s}_{j'- i} \leq (1 - \delta)^{i(\beta - 1) }(q_0)^{1 - \beta}n^2$ 
   This completes the proof of \Cref{appendixproposition}.

\end{proof}

\begin{proof}[Proof of Theorem~ \Cref{Lemma:General Random Turan for Large p}]
    Let $F$ be a graph and such that $F$ $(q_0(n)n^2, \gamma, \tau)$-balanced and for $n$ and $q \geq q_0(n)$, we have $\tau(n, q) = C \max\{n^{2 - \alpha}, q^{1 - \beta}n\}$. 

    Fix $q \geq q_0(n)$ and $0 <p \leq 1$ such that $pqn^2 \rightarrow \infty$ as $n \rightarrow \infty$. 

    By Lemma~\ref{appendixproposition}, there is a set $\c{S}$ and functions $f: \c{S} \rightarrow \c{P} (E(K_n))$and $g: \Forb(n, F) \rightarrow \c{S}$ with the following properties: 
    \begin{itemize}
    \item[(i)] Each element $\bold{S}\in \c{S}$ is a sequence of edge-disjoint subgraphs of $K_n$ such that $f(\bold{S})$ is edge-disjoint from each of the graphs $\bold{S}_i$.
    \item[(ii)] For each $H\in \Forb(n,F)$, we have
    \[\bigcup_i g(H)_i\sub H \sub f(g(H))\cup \bigcup_i g(H)_i.\]
    That is, every graph in the sequence $g(H)$ is a subgraph of $H$ and $H$ is contained in the union of these subgraphs and $f(g(H))$.
    \item[(iii)] We have $e(f(\bold{S}))\le q n^2$ for all $\bold{S}\in \c{S}$
    \item[(iv)] For every integer $t$, the number of $\bold{S}\in \c{S}$ with $\sum_i e(\bold{S}_i)=t$ is at most 
    $$\left( \frac{C'n^{2 - \frac{1}{\beta}}}{t
    }\right)^{\left({\frac{\beta}{\beta - 1}}\right)t} \cdot  \exp(C'q^{1 - \beta}n+ C'\log(n)^2n^{ 2- \alpha} )$$
\end{itemize}

Let $m =  12e^2 C'\max\{pqn^2, n^{2 - \alpha}(\log(n))^{C'}, q^{1 - \beta}n, p^{1 - \frac{1}{\beta}} n^{2 - \frac{1}{\beta}}\}$. For each $\bold{S} \in \S$, let $X_\bold{S}$ be the event that $|G(n, p) \cap f(\bold{S})| \geq m - \sum_i e(\bold{S_i})$. 

Then, 
\begin{align*}
    \mathbb{P}(\ex(F, G(n, p)) \geq m) &\leq \sum_{\bold{S} \in S} \mathbb{P}(\bold{S} \subseteq G(n, p)) \cdot \mathbb{P}(X_\bold{S}) \\ &\leq 
    \sum_{\bold{S} \in S} p^{\sum_i e(\bold{S_i})} \cdot \binom{q n^2}{m - \sum_i e(\bold{S_i}) }p^{m - \sum_i e(\bold{S_i}) }\\
    &\leq  \sum_{ t = 1}^{q^{1 - \beta} n} \left( \frac{C'n^{2 - \frac{1}{\beta} } p^{\frac{\beta - 1}{\beta}}}{t }\right)^{\left({\frac{\beta}{\beta - 1}}\right)t} \cdot  \exp(C'q^{1 - \beta}n+ C'\log(n)^2n^{ 2- \alpha} ) \cdot \left( \frac{eq p n^2}{ m - t} \right)^{ m - t}\\
    &\leq  \sum_{ t = 1}^{q^{1 - \beta} n} \left( \frac{C'n^{2 - \frac{1}{\beta} } p^{\frac{\beta - 1}{\beta}}}{t }\right)^{\left({\frac{\beta}{\beta - 1}}\right)t} \cdot  \exp(C'q^{1 - \beta}n+ C'\log(n)^2n^{ 2- \alpha} ) \cdot \left( \frac{eq p n^2}{ m - t} \right)^{ m - t}\\
 &\leq  \sum_{ t = 1}^{C'q^{1 - \beta} n} \exp(t\log(m / 12 e^2 t) + m/6 - m/2) \\
 &\leq \exp( - m / 4 + \log(m))\\
 &\leq \exp( -m / 8)
\end{align*}
Since $m \rightarrow \infty$ as $n \rightarrow \infty$, we have that  $\mathbb{P}(\ex(F, G(n, p)) \geq m) \rightarrow 0$ as $n \rightarrow \infty$, as desired.

\end{proof}

\end{document}